\documentclass[11pt, twoside, leqno]{amsart}  
% --- Packages ---
\usepackage{lipsum}
\usepackage{amsfonts}
\usepackage{graphicx}
\usepackage{epstopdf}
\usepackage{algorithmic}
\usepackage{calligra}
\usepackage{amsfonts,amsmath,amsthm,amssymb}
\usepackage{mathtools}
\usepackage{hyperref}
\usepackage{autonum}
\usepackage{hhline}
\usepackage{array}
\usepackage{diagbox}
\usepackage{tcolorbox}
\usepackage{mdframed}
\usepackage{multicol}
\usepackage{graphicx}
\usepackage{subcaption}
\usepackage{moreverb}
\usepackage{bbm}
\usepackage[margin=1.4in]{geometry}
\usepackage{todonotes}
\usepackage{scalerel,amssymb}
\allowdisplaybreaks
\usepackage{mathrsfs}  
\usepackage{lineno}
\usepackage{todonotes}
\usepackage{tikz}
\usepackage{pgfplots}
\usetikzlibrary{arrows.meta}
\usepackage{amsthm,thmtools,xcolor}
\usepackage[numbers,sort&compress]{natbib}
% --- NOTATIONS ----
% math

\newcommand{\dt}{\, \textup{d} t}
\newcommand{\ds}{\, \textup{d} s }
\newcommand{\dx}{\, \textup{d} x}

\newcommand{\dint}{\displaystyle\int}

% parantheses

% colors

% Sets
 % integer
\newcommand{\R}{\mathbb{R}} % real
 % complex
\newcommand{\N}{\mathbb{N}} % natural
 % admissible set
% Spaces
 % Hilbert space
 % Sobolev space
 % Sobolev space
 % Sobolev space
% Embedding constants

% formating
\newcounter{rownumber}

\definecolor{mygreen}{HTML}{43a047}
\newcommand{\vecc}[1]{\boldsymbol{#1}}
%%%%%%%%%%%%%%%%%%%%%%%%%%%%%
%    Theorem enviroments
%%%%%%%%%%%%%%%%%%%%%%%%%%%%%
\newtheorem{theorem}{Theorem}
\newtheorem{lemma}{Lemma}
\newtheorem{proposition}{Proposition}
\newtheorem{assumption}{Assumption}

\newtheorem{remark}{Remark}
\numberwithin{lemma}{section}
\numberwithin{proposition}{section}
\numberwithin{theorem}{section}
\numberwithin{equation}{section}
\makeatletter
\newcommand{\leqnomode}{\tagsleft@true}
\newcommand{\reqnomode}{\tagsleft@false}
\makeatother
\declaretheoremstyle[
headfont=\color{blue}\normalfont\bfseries,
bodyfont=\normalfont\itshape,
]{colored}

\title[Analysis of memory effects and thermal relaxation in sound waves]{Mathematical analysis of memory effects and thermal relaxation in nonlinear sound waves on unbounded domains}
\subjclass[2010]{35L75, 35G25}

\keywords{nonlinear acoustics, nonlocal wave equation, relaxing media, memory kernel}

\author[V. Nikoli\'c \& B. Said-Houari ]{\bfseries Vanja Nikoli\'c$^\ast$ and Belkacem Said-Houari}

\address{ 
Department of Mathematics \\ 
Radboud University   \\ 
Heyendaalseweg 135,
6525 AJ Nijmegen, The Netherlands}
\email{vanja.nikolic@ru.nl} 

\address{  
	Department of Mathematics\\ College of Sciences\\ University of
Sharjah, P. O. Box: 27272 \\ Sharjah, United Arab Emirates}
\email{bhouari@sharjah.ac.ae}
\thanks{$^*$Corresponding author: Vanja Nikoli\'c, \href{mailto:vanja.nikolic@ru.nl}{vanja.nikolic@ru.nl}}
\begin{document}
\vspace*{8mm}
\begin{abstract}
Motivated by the propagation of nonlinear sound waves through relaxing hereditary media,  we study a nonlocal third-order Jordan--Moore--Gibson--Thompson acoustic wave equation. Under the assumption that the relaxation kernel decays exponentially, we prove local well-posedness in unbounded two- and three-dimensional domains. In addition, we show that the solution of the three-dimensional model exists globally in time for small and smooth data, while the energy of the system decays polynomially.           
\end{abstract}
\vspace*{-7mm}
\maketitle           
    
\section{Introduction}
 Nowadays ultrasound waves are an indispensable tool in medicine, commonly used in imaging and non-invasive treatments of various disorders~\cite{duck2002nonlinear, cleveland2015nonlinear,maresca2017nonlinear,pinton2011effects}. Because of the high amplitude-to-frequency ratio that ultrasonic waves are likely to have, nonlinear effects can often be observed in their propagation. This necessitates a deeper understanding of the nonlinear acoustic models and their analytical properties. \\
   \indent Our work is particularly motivated by nonlinear sound waves in relaxing media that exhibit memory effects. These relaxation processes can occur when there are inhomogeneities in the propagation region; for example, through excitation of molecular degrees of freedom or some impurity effects in the fluid; cf.~\cite[Chapter 1]{naugolnykh2000nonlinear}. In such cases, the pressure-density state equation is not satisfied exactly but up to a term that involves the history of the process.  Additionally, classical models of nonlinear acoustics, such as the Westervelt and Kuznetsov equations, are known to exhibit parabolic-like behavior with an infinite speed of propagation; see~\cite{mizohata1993global, kaltenbacher2009global}. To avoid this paradox of diffusion, the Fourier temperature law within the governing equations (conservation of mass, momentum,  and energy) can be replaced by the Maxwell--Cattaneo law, resulting in a third-order in time acoustic wave propagation; see~\cite{jordan2008nonlinear}.\\
 \indent We investigate here such a third-order nonlinear acoustic model, known as the Jordan--Moore--Gibson--Thompson (JMGT) equation, with a memory term:
	\begin{equation}   
	\begin{aligned}
	&\tau \psi_{ttt}+\alpha \psi_{tt}-c^2\Delta \psi-b \Delta \psi_t+%
	\displaystyle\int_{0}^{t}g(s)\Delta \psi(t-s)\ds  
	=\left( k \psi_{t}^{2}\right)_t.
	\end{aligned}
	\end{equation}
\noindent The memory kernel $g$ corresponds to a particular relaxation mechanism in the medium; we refer to Section~\ref{Sec:ProblemSetting} below for more details. We are especially interested in the global solvability of the JMGT equation with memory and energy decay of its solutions in the whole $\R^3$. This type of memory acting only on the solution of the equation is often referred to as memory type I; cf.~\cite{lasiecka2017global, dell2016moore}.\\
\indent In the present work, we study the influence of memory of type I on the behavior of solutions in media whose parameters satisfy the so-called subcritical condition: $\alpha b- \tau c^2>0$. In smooth bounded domains, it is known that the exponential decay of the relaxation kernel $g$ directly influences how the energy of the system decays; see, for example,~\cite{lasiecka2017global}. The situation in the whole space $\R^n$ is different. As it turns out, although our memory kernel decays exponentially, the solution decays polynomially at most. It has been proven that in the absence of the memory term (i.e., when $g=0$),  the same polynomial decay rate is optimal for the linearized problem; see~\cite[Theorem 3.6]  {PellSaid_2019_1}.  We thus expect the decay rate to be sharp even with $g>0$ since the damping induced by the memory term is weak; see Theorem~\ref{Theorem_Decay} below and the discussion directly after it.      \\         
 \indent We organize the paper as follows.  We begin by discussing the modeling aspects and setting our problem in Section~\ref{Sec:ProblemSetting}. Section~\ref{Sec:Preliminaries} contains the necessary theoretical preliminaries, which allow us to rewrite the equation with the corresponding initial data as an abstract first-order Cauchy problem. In Section~\ref{Sec:EnergyEstimates}, we derive several energy estimates that are uniform in time and crucial for establishing global existence. Section~\ref{Sec:LocalExistence} is dedicated to proving well-posedness of the problem in $\R^2$ and $\R^3$ for sufficiently short final time. In Section~\ref{Sec:GlobalExistence}, we prove that in $\R^3$ the solution exists globally in time provided that the data are sufficiently small. Finally, in Section~\ref{Sec:DecayRates}, we show that in $\R^3$ the energy of the system decays polynomially with time. 
     \\  
 \indent The main results of our work concerning local well-posedness are contained in Theorems~\ref{Local_Existence_1} and \ref{Thm:LocalExistence_3D}. The global existence of solutions of the JMGT equation with memory in $\R^3$
  for small data is obtained in Theorem~\ref{Thm:GlobalExistence}, whereas the polynomial energy decay is established in Theorem~\ref{Theorem_Decay}.
\section{Problem setting and modeling} \label{Sec:ProblemSetting}
In nonlinear acoustics, the Kuznetsov equation is one of the classical models. It is given by
\begin{equation}\label{Kuznt}
\psi_{tt}-c^{2}\Delta \psi-\delta \Delta \psi_{t}=\left( \tfrac{1}{c^{2}}\tfrac{B}{2A}(\psi_{t})^{2}+|\nabla \psi|^{2}\right)_t,
\end{equation}
where $\psi=\psi(x,t)$ represents the acoustic velocity potential for $x \in \R^n$ and $t>0$; see~\cite{kuznetsov1971equations}. The equation \eqref{Kuznt} can be obtained as an approximation of the governing equations of fluid mechanics by means of asymptotic expansions in powers of small parameters; see~\cite{crighton1979model, kuznetsov1971equations, kaltenbacher2007numerical}.
The constants $c>0$ and $\delta >0$ are the speed and the diffusivity of sound, respectively. The parameter of nonlinearity $B/A$ arises in the Taylor expansion of the variations of pressure in a medium in terms of the variations of density; cf.~\cite{beyer1960parameter}. Typical values of these parameters in different media can be found in, e.g.,~\cite{kaltenbacher2007numerical, naugolnykh2000nonlinear}. If we can neglect local nonlinear effects and assume 
\begin{equation} \label{local_approximation}
|\nabla \psi|^2 \approx \frac{1}{c^2}\psi_t^2,
\end{equation} 
we arrive at the Westervelt equation in the potential form
 \begin{equation}\label{Westervelt}
 \psi_{tt}-c^{2}\Delta \psi-\delta \Delta \psi_{t}=\left( \tfrac{1}{c^{2}}(\tfrac{B}{2A}+1)(\psi_{t})^{2}\right)_t;
  \end{equation}
cf.~\cite{westervelt1963parametric}.  After solving equation \eqref{Kuznt} or \eqref{Westervelt} for the acoustic velocity potential, the acoustic pressure can be computed as $u= \varrho \psi_t$, where $\varrho$ denotes the mass density of the medium. \\
\indent In the derivation of these models, the classical Fourier law of heat conduction is used in the equation for the conservation of energy. It is, however, well-known that the Fourier law predicts an infinite speed of heat propagation: any thermal disturbance at one point has an instantaneous effect elsewhere
in the medium~\cite{liu1979instantaneous}. To overcome this drawback, the Maxwell--Cattaneo law can be used instead. Introducing this law of heat conduction in the derivation of \eqref{Westervelt} leads to a third-order equation given by
\begin{equation}\label{JMGT}
\tau \psi_{ttt}+\psi_{tt}-c^{2}\Delta \psi-b\Delta \psi_{t}= \left( \tfrac{1}{c^{2}}(\tfrac{B}{2A}+1)(\psi_{t})^{2}\right)_t;
\end{equation}
cf.~\cite{jordan2008nonlinear}. This nonlinear equation is often referred to as the  Jordan--Moore--Gibson--Thompson (JMGT) equation. Here $\tau>0$ stands for the relaxation time. The constant $b>0$ is given by 
\begin{equation} \label{b}
b=\delta +\tau c^2.
\end{equation}
\indent Additionally, it is well-known that relaxation processes play an important role in high-frequency waves in fluids and gases. If relaxation occurs, acoustic pressure can depend on the density at all prior times. Such a process, therefore, introduces a memory term into the state equation.
This motivates us to consider the general nonlocal JMGT equation in the form of
\begin{equation}   \label{Main_Equation}
\begin{aligned}
&\tau \psi_{ttt}+\alpha \psi_{tt}-c^2\Delta \psi-b \Delta \psi_t+%
\displaystyle\int_{0}^{t}g(s)\Delta \psi(t-s)\ds
=\left( k \psi_{t}^{2}\right)_t.
\end{aligned}
\end{equation}
The function $g$ denotes the relaxation memory kernel related to the particular relaxation mechanism. In  \eqref{Main_Equation}, we have introduced $k=\tfrac{1}{c^{2}}(\tfrac{B}{2A}+1)$. The coefficient $\alpha>0$ accounts for the losses due to friction. Equation~\eqref{Main_Equation} is supplemented with the following initial data: 
\begin{equation}  \label{initial data}
\psi(x,0)=\psi_{0}(x),\qquad \psi_{t}(x,0)=\psi_{1}(x), \qquad \psi_{tt}(x,0)=\psi_{2}(x),
\end{equation}
whose regularity will be specified in the theorems below. We note that a version of the non-local JGMT equation with a quadratic gradient nonlinearity could also be considered analogous to the Kuznetsov equation. To handle such nonlinearity in the existence analysis, typically, solution spaces of higher regularity are needed than those in the present work; see~\cite{mizohata1993global, KaltenbacherNikolic} for the analysis on bounded domains with $H^3$-regular solutions. We thus restrict our considerations here to equation \eqref{Main_Equation}.
\subsection{Memory kernel}
Throughout the paper, we make the following assumptions on the relaxation kernel; cf.~\cite[Section 1]{dell2016moore}. 
\begin{assumption} The memory kernel is assumed to satisfy the following conditions:
\begin{enumerate}
	\item[(G1)] \label{itm:first}  $g\in W^{1,1}(\R^+)$ and $g'$ is almost continuous on $\R^+=(0, +\infty)$. \vspace{0.1 cm}
	\item[(G2)] $g(s) \geq 0$ for all $s>0$ and 
	\reqnomode
	\begin{align} \label{def_cg}
	\ c^2_g:=c^2-\displaystyle\int_{0}^{\infty}g(s)\ds>0. 	\vspace{0.1 cm}
	\end{align}
	\item[(G3)] There exists $\zeta>0$, such that the function $g$ satisfies the 
	differential inequality given by
	\begin{equation}
	g^\prime(s)\leq -\zeta g(s)
	\end{equation}
	 for every $s\in (0,\infty)$. 	\vspace{0.1 cm}
	\item[(G4)] It holds that $g^{\prime\prime}\geq0$ almost everywhere.\vspace{0.2 cm}
\end{enumerate}
\end{assumption}
\noindent In relaxing media, the memory kernel typically has the exponential form
\begin{equation}
g(s)=m c^2\exp{(-s/\tau)},
\end{equation}
where $m$ is the relaxation parameter; see~\cite[Chapter 1]{naugolnykh2000nonlinear} and \cite[Section 1]{lasiecka2017global}. The value of $m$ is small, so the condition \eqref{def_cg}, equivalent to $m< \tau$, is easily satisfied. With this choice of the kernel, we have 
\[g'(s) \leq -g(s);\]
i.e., we can take $\zeta=1$. We see also that for $\tau \rightarrow 0^+$, the kernel tends to zero and we are formally in the regime of the Westervelt equation, as expected.\\
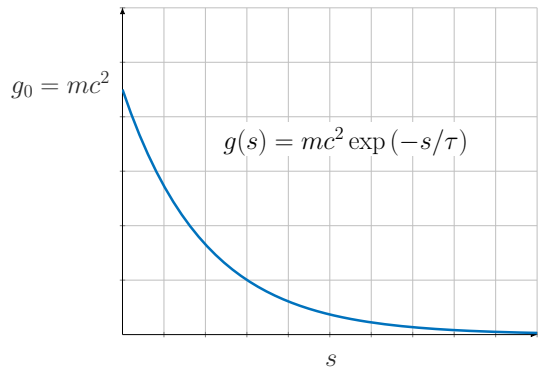
\begin{figure}[h]
\hspace*{-1.8cm} \definecolor{mycolor1}{rgb}{0.00000,0.44700,0.74100}%
\begin{tikzpicture}[scale=0.48, font=\huge]

\begin{axis}[%
width=4.521in,
height=3.566in,
at={(0.758in,0.481in)},
scale only axis,
axis x line=center,
axis y line=center,
axis line style = {-Latex},
xmin=0,
xmax=5,
xlabel style={at={(0.53, -0.11)}, font=\color{white!15!black}}, xticklabels={,,,,,,},
xlabel={\huge $s$},
ymin=0,
ymax=0.3,
ylabel style={at={(-0.28,0.82)}, font=\color{white!15!black}}, yticklabels={, , , , , ,, },
ylabel={\huge $g_0= mc^2$},
axis background/.style={fill=white},
xmajorgrids,
ymajorgrids,
legend style={legend cell align=left, align=left, draw=none,  empty legend, at={(0.85,0.65)}}
]
\addplot [color=mycolor1, line width=2pt]
  table[row sep=crcr]{%
0	0.225\\
0.102040816326531	0.203173356166206\\
0.204081632653061	0.183464056248179\\
0.306122448979592	0.165666702416925\\
0.408163265306123	0.149595821933489\\
0.510204081632653	0.135083934269641\\
0.612244897959184	0.121979805731992\\
0.714285714285714	0.110146873400314\\
0.816326530612245	0.0994618219553605\\
0.918367346938775	0.0898132985647839\\
1.02040816326531	0.0811007524345095\\
1.12244897959184	0.0732333869321069\\
1.22448979591837	0.0661292133618421\\
1.3265306122449	0.0597141965304742\\
1.42857142857143	0.0539214831993995\\
1.53061224489796	0.0486907053825858\\
1.63265306122449	0.0439673512296883\\
1.73469387755102	0.0397021969381077\\
1.83673469387755	0.0358507937737196\\
1.93877551020408	0.0323730048543522\\
2.04081632653061	0.0292325868686385\\
2.14285714285714	0.0263968123712068\\
2.24489795918367	0.0238361287179893\\
2.3469387755102	0.0215238500873047\\
2.44897959183673	0.0194358793771379\\
2.55102040816327	0.0175504570804206\\
2.6530612244898	0.0158479345212452\\
2.75510204081633	0.0143105690888161\\
2.85714285714286	0.0129223393352138\\
2.95918367346939	0.0116687780100175\\
3.16326530612245	0.00951467264523309\\
3.36734693877551	0.0077582241660803\\
3.57142857142857	0.00632602344351874\\
3.77551020408163	0.0051582129816401\\
3.97959183673469	0.00420598522934945\\
4.18367346938776	0.00342954271420659\\
4.48979591836735	0.00252516014338067\\
4.79591836734694	0.00185926646234869\\
5	0.0015160380747945\\
};
\addlegendentry{$g(s)=mc^2 \exp{(-s/\tau)}$}
\end{axis}
\end{tikzpicture}%
\caption{The fading relaxation kernel}	
\end{figure}
\begin{comment}
~\\
\indent In smooth bounded domains, exponential decay of the memory kernel $g$ leads to the exponential decay of the energy of the system; cf.~\cite[Theorem 1.4]{lasiecka2017global}. This changes when waves propagate in the whole space $\R^n$. Even though our memory kernel decays exponentially, the solution will decay at most polynomially.\\
\indent The optimality of the decay in $\R^n$ is usually measured with respect to the decay rate of the heat kernel: the solution of $u_t-\Delta u=0$ with initial data being the delta distribution. For the heat equation in bounded domains, the solution decays exponentially fast, however, in the whole space $\R^n$ the solution (i.e., its energy norm) decays at most polynomially with the rate $(1+t)^{-n/4}$ provided that the initial datum is in $L^1(\R^n)$; see~\cite{Giga_Book}. This decay rate of the heat energy is, in fact, optimal because we can explicitly deduce it from the form of the heat kernel.
\end{comment}
\subsection{Previous work}
The JMGT equation and its linearization have been a subject of extensive study. The linearization of this equation without memory is given by
\begin{equation}\label{MGT_2}
\tau \psi_{ttt}+\alpha\psi_{tt}-c^{2}\Delta \psi-b \Delta \psi_{t}=0. 
\end{equation}
This equation is known as the Moore--Gibson--Thompson equation, although, as mentioned in \cite{bucci2019feedback}, this model originally appears in the work of Stokes~\cite{stokes1851examination}.	Interestingly, equation \eqref{MGT_2} also arises in viscoelasticity theory under the name of \emph{standard linear model} of vicoelasticity;~see \cite{Gorain_2010} and references given therein. \\
\indent If $b=0$ in \eqref{MGT_2}, there is no semigroup associated with the linear dynamics; see~\cite{Kaltenbacher_2011}. For $b>0$, the linear dynamics is described by a strongly continuous semigroup, which is exponentially stable if 
	\begin{eqnarray}\label{Condition_Parameters}
	\alpha b-\tau c^2>0. 
	\end{eqnarray}
If $\alpha b=\tau c^2$, the energy is conserved. \\
\indent	 The linear model associated with the JMGT equation with memory \eqref{Main_Equation} in the pressure form reads as
\begin{equation}  \label{Main_Equation_Linear}
\begin{aligned}
&\tau u_{ttt}+\alpha u_{tt}-c^2\Delta u-b \Delta u_t+%
\displaystyle\int_{0}^{t}g(s)\Delta u(t-s)\ds
=0.
\end{aligned}
\end{equation} 
Recall that the pressure and potential are connected via $u= \varrho \psi_t$. In~\cite{lasiecka2016moore}, a generalization of this equation is studied in smooth bounded domains with a memory term in the form of $\int_{0}^{t}g(s)\Delta z(t-s)\, \textup{d}s,$ where $z$ is one of the three functions:  $z=u, z=u_{t}$, or  $z= u+u_{t}$. If the memory kernel $g$ decays exponentially, the same holds for the solution, provided that the critical condition \eqref{Condition_Parameters} holds. This result is extended in~\cite{lasiecka2015moore} by allowing the memory kernel to satisfy a more general decay property. \\
\indent The critical case where $\alpha b =\tau c^2$ and $\int_{0}^{\infty}g(s)\ds>0$ is investigated in~\cite{dell2016moore} with  a general strictly positive self-adjoint linear operator $A$ instead of $-\Delta$. The linearized problem  is exponentially stable if and only if $A$ is a bounded operator.  In the case of an unbounded operator $A$, the corresponding energy decays polynomially with the rate $1/t$ for regular initial data.\\
\indent Taking the quadratic nonlinear effects into account leads to the nonlinear JMGT equation of Westervelt type given in \eqref{JMGT}. Without memory effects, it is analyzed in~\cite{KaltenbacherNikolic} in terms of existence and regularity of solutions on bounded smooth domains. Furthermore, it is shown that its solution converges weakly to the solution of the Westervelt equation in the limit $\tau \rightarrow 0^+$.\\
\indent The JMGT equation with memory is investigated in~\cite{lasiecka2017global} on regular bounded domains, expressed in terms of the acoustic pressure $u$.  There it is proven that with suitable adjustment of the memory kernel, solutions exist globally for sufficiently small and regular initial data. With exponentially
decaying memory kernel these solutions exhibit exponential decay rates. \\
\indent	Due to the lack of Poincar\'e's inequality, the analysis of nonlinear acoustic models is more delicate in $\R^n$. Nevertheless, the linearized problem \eqref{Main_Equation_Linear} with and without memory is well-understood; see~\cite{PellSaid_2019_1, Bounadja_Said_2019}. The nonlinear JMGT equation \eqref{JMGT} is also known to have solutions globally in time in $\R^3$ in non-hereditary media; cf.~\cite{Racke_Said_2019}. 
\section{Theoretical preliminaries and notation}\label{Sec:Preliminaries}
We collect here several theoretical results that will be helpful in the later proofs.
\subsection{The history framework} Following~\cite{dell2016moore}, we use the history framework of Dafermos~\cite{dafermos1970asymptotic} to transform our problem into an evolution one. We introduce the auxiliary history variable $\eta=\eta(x, t,s)$, defined as
\begin{equation}  \label{def of eta}
\eta(x, t, s)= \begin{cases}
\psi(x, t)-\psi(x, t-s), \quad  &0<s\leq t, \\
\psi(x, t), \quad &s>t.
\end{cases}
\end{equation}
The JMGT equation \eqref{Main_Equation} then transforms into the following problem:
\begin{equation}  \label{eta syst}
\begin{cases}
\tau \psi_{ttt}+\alpha \psi_{tt}-b \Delta \psi_{t}-c^2_g\Delta \psi-%
\displaystyle\int_{0}^{\infty}g(s)\Delta \eta(s)\ds 
= 2k\psi_{t}\psi_{tt},  \\[2mm] 
\eta_{t}(x,s)+\eta_{s}(x,s)=\psi_{t}(x,t),%
\end{cases}%
\end{equation}
where we recall that the modified speed of sound squared $c^2_g$ is defined in \eqref{def_cg}. The problem is supplemented with the initial data \eqref{initial data}. \\
\indent Note that from the second equation in \eqref{eta syst} we get \eqref{def of eta} via Duhamel's formula, if we additionally set 
\begin{equation}
\eta(t=0)=\psi_0 \quad  \text{and} \quad \eta(s=0)=0;
\end{equation} see~\cite[Section 3]{grasselli2002uniform} for a detailed discussion on this supplementary equation. Therefore, we can obtain equation \eqref{Main_Equation} from \eqref{eta syst}. Indeed, it is enough to check that
\begin{equation}
\begin{aligned}
\int_{0}^{\infty}g(s)\Delta \eta(s)\ds =&\, \int_{0}^{t}g(s)\Delta \eta(s)\ds +\int_{t}^{\infty}g(s)\Delta \eta(s)\ds \\
=&\, \int_{0}^{t}g(s)\Delta (\psi(t)-\psi(t-s))\ds +\int_{t}^{\infty}g(s)\Delta \psi(t)\ds\\
=&\, (c^2-c^2_g) \Delta \psi-\int_0^t g(s) \Delta \psi(t-s) \ds.
\end{aligned}
\end{equation}
\subsection{\bf Setting $\boldsymbol{\alpha=1}$} From this point on, we set $\alpha=1$. We may do so without the loss of generality since we can always re-scale other coefficients in the equation. The subcritical condition \eqref{Condition_Parameters} then reads as 
\begin{equation}\label{critical_condition_scaled}
b >\tau c^2,
\end{equation}
which, having in mind relation \eqref{b}, means that we need the sound diffusivity $\delta$ to be positive. In other words, we are assuming our medium to be thermoviscous. For our well-posedness result, we will also require that $\tau c^2 > \tau c^2_g$, which is equivalent to assuming that $\int_{0}^{\infty}g(s)\ds>0$.
\subsection{Functional spaces} For future use, we introduce here the weighted $L^{2}$ spaces,
\begin{equation}
L^2_{\tilde{g}}=L^{2}_{\tilde{g}}(\mathbb{R}^{+}, L^2(\mathbb{R}^{n})).
\end{equation}
We will have three types of weights: $\tilde{g} \in \{g, -g' , g''\}$. The space is endowed with the inner product 
\begin{equation}
\left(\eta,\tilde{\eta} \right)_{L^2, \tilde{g}}=%
\displaystyle\int_{0}^{t}\tilde{g}(s)\left(
\eta(s),\tilde{\eta}(s)\right)_{L^{2}(\mathbb{R}%
	^{n})} \ds
\end{equation}
for $\eta, \tilde{\eta}\in L^2_{\tilde{g}}$, and with the following norm:
\begin{equation}
\Vert\eta\Vert^{2}_{L^2, \tilde{g}}=\int_{0}^{t}\tilde{g}(s)\Vert%
\eta(s)\Vert_{L^{2}}^{2}\ds.
\end{equation}   
We can then further introduce the spaces
\[H^{m}_{\tilde{g}}=\{\eta \in L^2_{\tilde{g}}:\ D^{\alpha} \eta \in L^2_{\tilde{g}}, \quad \forall \alpha: |\alpha| \leq m\}, \quad m \in \{1, 2\}.\]
The infinitesimal generator of the right-translation $C_{0}$%
-semigroup on $L^2_{\tilde{g}}$ is given by the linear operator $\mathbb{T}$:
\begin{equation} \label{def_T_eta}
\mathbb{T}\eta=-\eta_s\quad \text{with}\quad D(\mathbb{T})=\{\eta\in%
L^2_{\tilde{g}}\, | \ \eta_s\in L^2_{\tilde{g}},\ \eta(s=0)=0\},
\end{equation}
where the index $s$ denotes the distributional derivative with respect to the
variable $s>0$; cf.~\cite{dell2016moore, Bounadja_Said_2019}.
%NEW%%%%%%%%%%%%%%%%%%%%%%%%%%%%%%
\subsection{Auxiliary inequalities}
Throughout the paper, we often use the Ladyzhenskaya inequality for functions $f \in H^1(\R^n)$, with $ n \in \{2, 3\}$, given by 
\begin{eqnarray}  \label{Ladyz_Ineq}
\Vert f\Vert_{L^4}\leq 
C_n\Vert f\Vert_{L^2}^{1-n/4}\Vert \nabla f\Vert_{L^2}^{n/4},
\end{eqnarray}
where the constant $C_n>0$ depends on $n$. Furthermore, we frequently rely also on the particular case of the Gagliardo--Nirenberg interpolation inequality~\cite{Ner59,nirenberg2011elliptic}:
\begin{equation} \label{Interpolation_inequality}
\begin{aligned}
\left\Vert \nabla u\right\Vert _{L^{4}}\leq C_n \left\Vert \nabla u\right\Vert
_{L^{2}}^{1-n/4}\left\Vert
\nabla^{2}u\right\Vert _{L^{2}}^{n/4}.  
\end{aligned}
\end{equation}
We need the following estimate as well:
\begin{equation}
\Vert \nabla( uv) \Vert _{L^2}\leq C( \Vert u\Vert _{L^{\infty}}\Vert
\nabla v\Vert _{L^{2}}+\Vert v\Vert _{L^{4}}\Vert \nabla u\Vert
_{L^{4}}).
\label{First_inequaliy_Guass}
\end{equation}
\noindent The next technical estimate will be employed when deriving the decay rate of the energy of our system.
\begin{lemma}[see Lemma 3.5 in~\cite{PellSaid_2019_1}] \label{Lemma:Ineq}
   	Let $n\geq 1$ and $t\geq 0 $. Then the following estimate holds:
   	\begin{equation}\label{eq of DR}
   	\int_{0}^{1}r^{n-1}e^{-r^{2}t}\textup{d}r \leq C(n)(1+t)^{-{n}/{2}}.
\end{equation}    
\end{lemma}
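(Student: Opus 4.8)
The plan is to split the argument according to the size of $t$, handling the ranges $0\le t\le 1$ and $t\ge 1$ separately, and then to recombine the two regimes using the elementary comparison $1\le 1+t\le 2t$, valid for $t\ge 1$.

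For the bounded range $0\le t\le 1$, I would simply discard the exponential factor, which never exceeds one, to get
\[
\int_{0}^{1}r^{n-1}e^{-r^{2}t}\,\textup{d}r\leq \int_{0}^{1}r^{n-1}\,\textup{d}r=\frac1n.
\]
On this interval $(1+t)^{-n/2}\geq 2^{-n/2}$, so the claimed bound holds here with the constant $2^{n/2}/n$.

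For $t\ge 1$, the key step is the substitution $r=u/\sqrt{t}$ (equivalently $u=r\sqrt{t}$), under which $\textup{d}r=\textup{d}u/\sqrt{t}$, $r^{n-1}=t^{-(n-1)/2}u^{n-1}$, and the domain $r\in(0,1)$ becomes $u\in(0,\sqrt{t})$. Thus
\[
\int_{0}^{1}r^{n-1}e^{-r^{2}t}\,\textup{d}r=t^{-n/2}\int_{0}^{\sqrt{t}}u^{n-1}e^{-u^{2}}\,\textup{d}u\leq t^{-n/2}\int_{0}^{\infty}u^{n-1}e^{-u^{2}}\,\textup{d}u=\tfrac12\,\Gamma(n/2)\,t^{-n/2},
\]
where the last equality follows from the substitution $v=u^{2}$ and the definition of the Gamma function. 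Since $t\ge 1$ forces $t^{-1}\le 2(1+t)^{-1}$, we obtain $\int_{0}^{1}r^{n-1}e^{-r^{2}t}\,\textup{d}r\leq 2^{n/2-1}\Gamma(n/2)(1+t)^{-n/2}$. Taking $C(n)$ to be the larger of the two constants $2^{n/2}/n$ and $2^{n/2-1}\Gamma(n/2)$ then yields \eqref{eq of DR} for all $t\ge 0$.

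There is no real obstacle in this argument; the only point to keep in mind is that the constant is permitted to depend on $n$ (it does so through $\Gamma(n/2)$ and the factor $2^{n/2}/n$), which is precisely what the statement allows. If one wished to avoid invoking the Gamma function, it would suffice to bound $\int_{0}^{\infty}u^{n-1}e^{-u^{2}}\,\textup{d}u$ by any finite $n$-dependent constant, e.g.\ by splitting at $u=1$, but the computation via $\Gamma(n/2)$ is the cleanest.
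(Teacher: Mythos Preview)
Your argument is correct: the split at $t=1$ with the trivial bound for small $t$ and the rescaling $u=r\sqrt{t}$ for large $t$ is the standard way to establish this estimate, and each step is fully justified. The paper does not supply its own proof of this lemma---it simply quotes the result from~\cite{PellSaid_2019_1}---so there is nothing in the present paper to compare your approach against; your self-contained argument is exactly what one would expect.
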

\noindent We state here one more useful inequality that will be crucial in our energy arguments.
\begin{lemma}[see Lemma 3.7 in~\cite{strauss1968decay}] 
\label{Lemma_Stauss} Let $M=M(t)$ be a non-negative continuous function
satisfying the inequality
\begin{equation}
M(t)\leq C_1+C_2 M(t)^{\kappa},
\end{equation}
in some interval containing $0$, where $C_1$ and $C_2$ are positive
constants and $\kappa>1$. If $M(0)\leq C_1$ and
\begin{equation}
C_1C_2^{1/(\kappa-1)}<(1-1/\kappa)\kappa^{-1/(\kappa-1)},
\end{equation}
then in the same interval
\begin{equation}
M(t)<\frac{C_1}{1-1/\kappa}.
\end{equation}
\end{lemma}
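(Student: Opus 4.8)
The plan is to run a standard continuity (barrier) argument driven by the sign of the scalar map
\[
f(x):=C_1+C_2x^{\kappa}-x,\qquad x\ge 0 .
\]
The hypothesis on $M$ says precisely that $f(M(t))\ge 0$ for every $t$ in the interval $I$ containing $0$, and in particular $f(M(0))\ge 0$. The idea is to show that under the smallness assumption the super‑level set $\{x\ge0:\,f(x)\ge0\}$ consists of a \emph{bounded} component $[0,a]$ and an \emph{unbounded} component $[b,\infty)$ with $a<b$, that $M(0)$ lies strictly inside $[0,a)$, and hence, $M$ being continuous on the connected set $I$, that $M(t)$ can never leave $[0,a]$; it then only remains to check that $a<C_1/(1-1/\kappa)$.

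First I would record the elementary properties of $f$. We have $f(0)=C_1>0$ and $f(x)\to+\infty$ as $x\to\infty$ since $\kappa>1$; moreover $f'(x)=\kappa C_2x^{\kappa-1}-1$ vanishes at the unique point $x_\ast=(\kappa C_2)^{-1/(\kappa-1)}>0$, with $f$ strictly decreasing on $[0,x_\ast]$ and strictly increasing on $[x_\ast,\infty)$. A direct computation gives
\[
f(x_\ast)=C_1-\bigl(1-1/\kappa\bigr)\,\kappa^{-1/(\kappa-1)}\,C_2^{-1/(\kappa-1)},
\]
so the assumption $C_1C_2^{1/(\kappa-1)}<(1-1/\kappa)\kappa^{-1/(\kappa-1)}$ is \emph{exactly} the statement $f(x_\ast)<0$. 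Consequently $f$ has precisely two positive zeros $0<a<x_\ast<b$, with $f>0$ on $[0,a)$, $f<0$ on $(a,b)$, and $f>0$ on $(b,\infty)$. Since $f(a)=0$ reads $a=C_1+C_2a^{\kappa}>C_1$, we also get $M(0)\le C_1<a$.

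Next comes the continuity step. Suppose, for contradiction, that $M(t_1)\ge b$ for some $t_1\in I$. Applying the intermediate value theorem to the continuous function $M$ on the subinterval of $I$ with endpoints $0$ and $t_1$ — using $M(0)<a<b\le M(t_1)$ — yields a point $t_2$ between them with $M(t_2)\in(a,b)$, hence $f(M(t_2))<0$, contradicting $f(M(t))\ge0$ on $I$. Therefore no $t\in I$ satisfies $M(t)\ge b$; since also $M(t)\notin(a,b)$ for every $t\in I$, we conclude $M(t)\le a$ for all $t\in I$.

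Finally, to upgrade $M(t)\le a$ to the claimed bound, observe that $a<x_\ast$ forces $C_2a^{\kappa-1}<C_2x_\ast^{\kappa-1}=1/\kappa$; substituting into $a=C_1+C_2a^{\kappa}=C_1+a\,(C_2a^{\kappa-1})$ gives $a<C_1+a/\kappa$, i.e. $a(1-1/\kappa)<C_1$, so $a<C_1/(1-1/\kappa)$. Combining, $M(t)\le a<C_1/(1-1/\kappa)$ on all of $I$, which is the assertion. There is no genuine obstacle beyond the bookkeeping: the two points needing care are that the gap $(a,b)$ is actually nonempty — this is where the strict inequality in the hypothesis, equivalently $f(x_\ast)<0$, is used — and that $M(0)$ lies strictly to the left of this gap — this is the inequality $a>C_1$; given both, connectedness of $I$ and continuity of $M$ finish the argument.
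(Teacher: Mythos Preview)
Your argument is correct. The paper does not supply its own proof of this lemma---it is simply quoted from Strauss~\cite{strauss1968decay}---so there is nothing to compare against; the barrier/continuity argument you give is the standard one and all the computations (in particular $f(x_\ast)=C_1-(1-1/\kappa)\kappa^{-1/(\kappa-1)}C_2^{-1/(\kappa-1)}$ and the final step $a<x_\ast\Rightarrow a<C_1/(1-1/\kappa)$) check out.
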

\subsection{Notation} Throughout the paper, the constant $C$ denotes a generic positive constant that does not depend on time, and that may take different values of different occasions. We use $x \lesssim y$ to denote $x \leq C y$.
\section{A priori energy estimates} \label{Sec:EnergyEstimates}
In this section, we formally derive several energy estimates for our problem that that will be utilized later. We begin by rewriting our equation \eqref{eta syst} as a first-order in time system. To this end, we introduce the functions 
\begin{equation}\label{Change_Variable}
v=\psi_{t}\quad \text{and}\quad w=\psi_{tt},
\end{equation}
which leads to the following system of equations:
\begin{equation}  \label{Main_System}
\begin{cases}
\psi_{t}=v, \\ 
v_{t}=w, \\ 
\tau w_{t}=- w+c^2_g\Delta \psi+b\Delta v + \displaystyle%
\int_{0}^{\infty}g(s)\Delta\eta(s)\ds+2k vw,
\\ 
\eta_{t}=v-\eta_{s},%
\end{cases}%
\end{equation}
with the initial data 
\begin{equation}  \label{Main_System_IC}
(\psi, v, w, \eta) |_{t=0} =(\psi_0, \psi_1, \psi_2, \psi_0).
\end{equation}
By using the notation
\[\vecc{\Psi}=(\psi, v, w, \eta)^T,\]
and setting $\vecc{\Psi}_0=\Psi(0)$, we can convert our problem into an initial value problem for a first-order abstract evolution equation. Indeed, $\Psi$ satisfies
\begin{equation} \label{abstract_evol_eq}
\begin{aligned}
\begin{cases}
\dfrac{\textup{d}}{\dt}\vecc{\Psi}(t)= \mathcal{A} \vecc{\Psi}(t)+\mathcal{F}(\vecc{\Psi}),\quad t>0, \vspace{0.2cm}\\[1mm] 
\vecc{\Psi}(0)=\vecc{\Psi}_0,
\end{cases} 
\end{aligned}
\end{equation}
where the operator $ \mathcal{A}$ is defined as
\begin{equation}
\begin{aligned}
\mathcal{A}\begin{bmatrix}
\psi \\[1mm]
v \\[1mm]
w \\[3mm]
\eta^t 
\end{bmatrix} = \begin{bmatrix}
v \\[1mm]
w \\[1mm]
-\dfrac{1}{\tau} w+ \dfrac{c^2_g}{\tau} \Delta \psi+\dfrac{b}{\tau} \Delta v+\dfrac{1}{\tau}\displaystyle \int_0^\infty g(s)\Delta \eta(s) \ds \\[3mm]
v+\mathbb{T} \eta
\end{bmatrix},
\end{aligned}
\end{equation}
with the operator $\mathbb{T}$ as in \eqref{def_T_eta}. The nonlinear term in \eqref{abstract_evol_eq} is given by
\begin{equation} \label{def_F}
\begin{aligned}
\mathcal{F}(\vecc{\Psi})= \dfrac{2k}{\tau} \, [
0,\
0 ,\
v w ,\
0 
]^T.
\end{aligned}
\end{equation}
Going forward, our work plan is to introduce the mapping 
\begin{equation}
\begin{aligned}\label{Mapping_T}
\mathcal{T}(\vecc{\Phi})=\vecc{\Psi},
\end{aligned}
\end{equation}
where $\vecc{\Psi}$ solves the inhomogeneous linear problem
\begin{equation} 
\begin{cases}
\partial_t \vecc{\Psi}-\mathcal{A} \vecc{\Psi} = \mathcal{F}(\vecc{\Phi}), \\
\vecc{\Psi}_{t=0}=\vecc{\Psi}_0
\end{cases}
\end{equation}
on a suitably defined ball in a Banach space and employ the contraction principle on $\mathcal{T}$. The unique fixed-point is then the solution to our nonlinear problem. \\
\indent In preparation, we first derive several energy estimates for problem \eqref{Main_System}, which are uniform in time. They will be crucial in proving global solvability.
\subsection{Functional setting} In order to formulate our results, we introduce the Hilbert spaces
  \begin{equation} \label{Hilbert_spaces}
\begin{aligned}
& \mathcal{H}^{s+1}= H^{s+1}(\R^n) \times H^{s+1}(\R^n) \times H^{s}(\R^n) \times  H^{s+1}_{-g'}(\R^n), \\
\end{aligned}
\end{equation}
for $s \in \{0, 1\}$ and  $n \in \{2,3\}$. It is known that the  homogeneous Sobolev space $\dot{H}^{1}(\R^n)$ is a Hilbert space if and only if $n>2$; see~\cite[Proposition 1.34]{bahouri2011fourier}. For $n=3$, we can therefore work with the Hilbert spaces
  \begin{equation} \label{dotHilbert_space_1}
\begin{aligned}
 \dot{\mathcal{H}}^{1}=&\, \dot{H}^{1}(\R^3) \times H^{1}(\R^3) \times L^2(\R^3) \times  \dot{H}^{1}_{-g'}(\R^3),
 \end{aligned}
 \end{equation}
where
$ \dot{H}^{1}_{-g'}(\R^3)= \{\eta: \nabla \eta \in L^2_{-g'}\},$ as well as
   \begin{equation} \label{dotHilbert_space_2}
 \begin{aligned}
 \dot{\mathcal{H}}^{2}=&\, \begin{multlined}[t] \{ \psi \in \dot{H}^{1}(\R^3): \, \nabla^2 \psi \in L^2(\R^3)\} \times H^{2}(\R^3) \times H^1(\R^3)\\
  \times \{\eta \in \dot{H}^{1}_{-g'}(\R^3):\, \nabla^2 \eta \in L^2_{-g'}(\R^3) \}.  \end{multlined}
\end{aligned}
\end{equation}
We intend to work with these spaces to show global well-posedness in $\R^3$. \\[2mm]
\noindent \textbf{Energy functionals.} We then define the energy of first order by
\begin{equation}\label{E_1_Eqv}
\begin{aligned}
\mathscr{E}_1[\vecc{\Psi}] =&\, \begin{multlined}[t]\Vert\nabla(\psi +\tau v)\Vert^{2}_{L^2} + \Vert v+\tau w\Vert^{2}_{L^2}+\Vert \nabla v\Vert^{2}_{L^2}
+\Vert \nabla\eta\Vert^{2}_{L^2, -g'}. % +\Vert \nabla\eta\Vert^{2}_{L^2, g}	
\end{multlined}
\end{aligned}
\end{equation} 
 We also introduce the  energy functional of second order as follows:
\begin{equation}\label{E_2_Eqv}
\begin{aligned}
\mathscr{E}_2[\vecc{\Psi}] =&\, \begin{multlined}[t] \Vert\Delta(\psi +\tau v)\Vert^{2}_{L^2} + \Vert\nabla(v+\tau w)\Vert^{2}_{L^2}+\Vert \Delta v\Vert^{2}_{L^2}
+\Vert \Delta\eta\Vert^{2}_{L^2, -g'}. %+\Vert \Delta\eta\Vert^{2}_{L^2, g}. 
\end{multlined}
\end{aligned}
\end{equation}
Thus for $\vecc{\Psi} \in \dot{\mathcal{H}}^2$, we have the norm
\begin{equation}
\|\vecc{\Psi}\|_{\dot{\mathcal{H}}^2}= \left(\mathscr{E}_1[\vecc{\Psi}]+\mathscr{E}_2[\vecc{\Psi}]+\Vert w\Vert _{L^{2}}^{2} \right)^{1/2},
\end{equation}  
whereas for $\vecc{\Psi} \in \mathcal{H}^2$, the norm is given by
\begin{equation}
\|\vecc{\Psi}\|_{\mathcal{H}^2}= \left(\|\psi\|^2_{L^2}+\mathscr{E}_1[\Psi]+\mathscr{E}_2[\vecc{\Psi}]+\Vert w\Vert _{L^{2}}^{2} \right)^{1/2}.
\end{equation}
For $\Psi \in C([0,T]; \mathcal{H}^2)$,  we can introduce here the energy semi-norm by
\begin{equation} \label{EnergyNorm}
\begin{aligned}
|\vecc{\Psi}|_{\mathcal{E}(t)}
= \, \sup_{0\leq \sigma \leq t}  \left(\mathscr{E}_1[\vecc{\Psi}](\sigma)+\mathscr{E}_2[\vecc{\Psi}](\sigma)+\Vert w(\sigma )\Vert _{L^{2}}^{2} \right)^{1/2} . 
\end{aligned}
\end{equation}
The corresponding dissipation semi-norm is given by
\begin{equation} \label{DissipativeEnergyNorm}
\begin{aligned}
&|\vecc{\Psi}|_{\mathcal{D}(t)}\\
=&\, \left\{ \vphantom{\int_{0}^{t}} \right.\begin{multlined}[t]\int_{0}^{t}\left(\Vert \nabla
v(\sigma )\Vert _{L^2}^{2}+\Vert \nabla \eta (\sigma )\Vert _{L^2, -g'}^{2}+\mathscr{E}_2[\vecc{\Psi}](\sigma)\right.  \left. \left.
+\Vert w(\sigma )\Vert
_{L^{2}}^{2} \right) \textup{d} \sigma  \vphantom{\int_{0}^{t}}\right\}^{1/2}.  \end{multlined}
\end{aligned}
\end{equation}%
We can easily see here one of the difficulties in the analysis of the JMGT equation in $\R^n$, which is that in general, we do not have direct control over $\|\psi(t)\|_{L^2}$ because of the lack of Poincar\'e's inequality.
\subsection{Derivation of the estimates}  We derive the energy estimates under the assumption that a sufficiently smooth solution $\vecc{\Psi}=(\psi, v, w, \eta)^T$ of our system \eqref{Main_System} with initial conditions  \eqref{Main_System_IC} exists on some time interval $[0, T]$. In particular, we assume that 
$|\vecc{\Psi}|_{\mathcal{E}(T)}< \infty$.  The estimates below will then be rigorously justified in Section~\ref{Sec:LocalExistence}.\\
\indent  To simplify the notation that involves the nonlinear term $2kvw$ in the system, we also introduce the functionals $R^{(1)}$ and $R^{(2)}$ as
\begin{equation} \label{Def_R}
\begin{aligned}
R^{(1)}(\varphi)= 2k(vw, \varphi)_{L^2}, \quad R^{(2)}(\varphi)= 2k(\nabla(vw), \nabla \varphi)_{L^2},
\end{aligned}
\end{equation}
where $\varphi$ stands for various test functions that we use in the proofs.\\
\indent Our main goal now is to derive an estimate in the form
\begin{equation}\label{Main_Estimate_Bootstrap}
\begin{aligned}
|\vecc{\Psi}|^2_{\mathcal{E}(t)}+|\vecc{\Psi}|^2_{\mathcal{D}(t)} \lesssim&\, |\vecc{\Psi}|^2_{\mathcal{E}(0)}+\sum_i \int_0^t |R^{(1)}(\varphi_i) |\, \textup{d}\sigma+\sum_j \int_0^t |R^{(2)}(\varphi_j) |\, \textup{d}\sigma \\
\lesssim&\,  |\vecc{\Psi}|^2_{\mathcal{E}(0)}+ |\vecc{\Psi}|_{\mathcal{E}(t)}|\vecc{\Psi}|^2_{\mathcal{D}(t)}
\end{aligned}
\end{equation}
for all $t \in [0,T]$. On account of Lemma~\ref{Lemma_Stauss}, this inequality together with a bootstrap argument yields
\begin{eqnarray*}
|\vecc{\Psi}|_{\mathcal{E}(t)}+|\vecc{\Psi}|_{\mathcal{D}(t)} \lesssim |\vecc{\Psi}|_{\mathcal{E}(0)} 
\end{eqnarray*}
provided that $|\vecc{\Psi}|_{\mathcal{E}(0)}$ is small enough. The hidden constant does not depend on time, and so the above estimates allow us to continue the solution to $T=\infty$. 

\subsection{Lower-order estimates}
In order to formulate our results and following~\cite{dell2016moore}, we introduce here the weighted lower-order energy of first order at time $t \geq 0$ as
\begin{equation} \label{energy}
\begin{aligned}
 E_1(t)
=& \,\begin{multlined}[t] \dfrac{1}{2}\left [\vphantom{\int_{0}^{%
		\infty}} c^2_g%
\|\nabla (\psi+\tau v)\|_{L^2}^{2}+\tau(b -\tau
c^2_g )\|\nabla v\|^{2}_{L^2}+\|v+\tau w\|^{2}_{L^2}\right.   \\
 \left. +\tau \Vert \nabla
 \eta \Vert _{L^2, -g'}^{2}+ \|\nabla \eta\|^2_{L^2, g}
\right.
 \left.+ 2\tau \int_{\mathbb{R}^{n}}
\int_{0}^{\infty}g(s) \nabla \eta (s) \cdot \nabla v
\ds \dx\right]. \end{multlined}
\end{aligned}
\end{equation}
We remark that the last term in \eqref{energy} has an undefined sign, but we will show that the other terms in the energy functional can absorb it. In fact, $E_1$ is equivalent to the energy $\mathscr{E}_1=\mathscr{E}_1[\vecc{\Psi}]$, introduced in \eqref{E_1_Eqv}.
\begin{lemma} \label{Lemma_EquivE_0}
	Assume that $b  \geq \tau c^2 >\tau c^2_g.$ There exist positive constants $C_{1}$ and $C_{2}$, such that
	\begin{equation}\label{equiv E and E0}
	C_{1} \mathscr{E}_1(t)\leq E_1(t)\leq C_{2} \mathscr{E}_1(t), 
	\end{equation}
	for all $t\geq0$.
\end{lemma}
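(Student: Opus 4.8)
The plan is to establish the two-sided bound \eqref{equiv E and E0} by comparing $E_1(t)$ term by term with $\mathscr{E}_1(t)$, the only delicate point being the cross term
\[
I := 2\tau \int_{\mathbb{R}^{n}} \int_{0}^{\infty} g(s)\, \nabla \eta(s)\cdot \nabla v \ds \dx,
\]
which has no definite sign. Every other summand in $E_1$ in \eqref{energy} is, up to a positive constant, one of the summands appearing in $\mathscr{E}_1$ in \eqref{E_1_Eqv}: the coefficient of $\|\nabla(\psi+\tau v)\|_{L^2}^2$ is $c_g^2>0$ by (G2); the coefficient of $\|\nabla v\|_{L^2}^2$ is $\tau(b-\tau c_g^2)$, which is positive precisely because of the hypothesis $b \ge \tau c^2 > \tau c_g^2$; the coefficient of $\|v+\tau w\|_{L^2}^2$ is $\tfrac12$; and the term $\tau \|\nabla\eta\|_{L^2,-g'}^2$ matches the last summand of $\mathscr{E}_1$ with coefficient $\tau>0$. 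The extra term $\|\nabla\eta\|_{L^2,g}^2$ is nonnegative (since $g\ge0$) and is dominated by $\|\nabla\eta\|_{L^2,-g'}^2$ via (G3), because $g(s)\le -\tfrac1\zeta g'(s)$ implies $\|\nabla\eta\|_{L^2,g}^2 \le \tfrac1\zeta \|\nabla\eta\|_{L^2,-g'}^2$; so it only contributes harmlessly to the upper bound and can be discarded from below.

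The crux is therefore to absorb $I$. First I would bound it by Cauchy--Schwarz in the $s$-variable with weight $g$:
\[
|I| \le 2\tau \int_{\mathbb{R}^n}\!\left(\int_0^\infty g(s)\,|\nabla\eta(s)|^2\ds\right)^{1/2}\!\left(\int_0^\infty g(s)\ds\right)^{1/2}\!|\nabla v|\dx
\le 2\tau \Big(\textstyle\int_0^\infty g\,\ds\Big)^{1/2} \|\nabla\eta\|_{L^2,g}\,\|\nabla v\|_{L^2},
\]
using Cauchy--Schwarz in $x$ in the last step. Then by Young's inequality, for any $\epsilon>0$,
\[
|I| \le \epsilon \tau \|\nabla v\|_{L^2}^2 + \frac{\tau}{\epsilon}\Big(\textstyle\int_0^\infty g\,\ds\Big)\|\nabla\eta\|_{L^2,g}^2
\le \epsilon \tau \|\nabla v\|_{L^2}^2 + \frac{\tau (c^2-c_g^2)}{\epsilon \zeta}\|\nabla\eta\|_{L^2,-g'}^2,
\]
where I have again used (G3) to pass from the $g$-weight to the $-g'$-weight, and \eqref{def_cg} to write $\int_0^\infty g\,\ds = c^2 - c_g^2$. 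Choosing $\epsilon$ small enough that $\epsilon < b/(\tau c_g^2)\cdot$(margin) — concretely, $\epsilon \le \tfrac12(b-\tau c_g^2)/(\tau c_g^2)$ does not quite fit, so more carefully one picks $\epsilon$ so that $\tau(b-\tau c_g^2) - \epsilon\tau > 0$, i.e. $\epsilon < b-\tau c_g^2$ — the negative part of $I$ is absorbed into the (strictly positive) $\|\nabla v\|_{L^2}^2$ and $\|\nabla\eta\|_{L^2,-g'}^2$ terms of $E_1$, leaving a positive-definite quadratic form bounded below by a multiple of $\mathscr{E}_1$. For the upper bound one simply takes $\epsilon$ of order one in the same estimate for $|I|$ and uses $\|\nabla\eta\|_{L^2,g}^2 \le \tfrac1\zeta\|\nabla\eta\|_{L^2,-g'}^2$ once more, so that $E_1(t) \le C_2 \mathscr{E}_1(t)$ with $C_2$ depending only on $\tau$, $b$, $c^2$, $c_g^2$, $\zeta$.

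The main obstacle is bookkeeping: one must verify that the single $\|\nabla v\|_{L^2}^2$-reserve with coefficient $\tau(b-\tau c_g^2)$ and the single $\|\nabla\eta\|_{L^2,-g'}^2$-reserve with coefficient $\tau$ are together large enough to swallow $|I|$ with room to spare, which is exactly where the strict inequality $\tau c^2 > \tau c_g^2$ (equivalently $\int_0^\infty g\,\ds >0$, ensuring $\int_0^\infty g\,\ds < c^2$, but more to the point giving a quantitative gap) and $b \ge \tau c^2$ enter. I would carry out the argument by collecting the three relevant terms of $E_1$ — the $\|\nabla v\|_{L^2}^2$ term, the $\tau\|\nabla\eta\|_{L^2,-g'}^2$ term, and $I$ — into a $2\times2$ quadratic form in the quantities $\|\nabla v\|_{L^2}$ and $\|\nabla\eta\|_{L^2,-g'}$ (after the Cauchy--Schwarz reduction above), checking that its determinant is positive under the stated parameter assumptions; the remaining terms of $E_1$ are already manifestly comparable to their counterparts in $\mathscr{E}_1$. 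This yields $C_1\mathscr{E}_1(t) \le E_1(t)$ for all $t\ge0$ with $C_1>0$ depending only on the fixed parameters, and combined with the upper bound completes the proof of \eqref{equiv E and E0}.
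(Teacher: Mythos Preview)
Your overall strategy matches the paper's, but there is a real gap in the lower bound. By discarding the term $\tfrac12\|\nabla\eta\|_{L^2,g}^2$ from $E_1$ before absorbing the cross term $I$, your Young--inequality bookkeeping forces the simultaneous constraints $\epsilon < b-\tau c_g^2$ (to keep the $\|\nabla v\|_{L^2}^2$ coefficient positive) and $\epsilon > (c^2-c_g^2)/\zeta$ (to keep the $\|\nabla\eta\|_{L^2,-g'}^2$ coefficient positive). These are compatible only when $(b-\tau c_g^2)\zeta > c^2-c_g^2$; under the stated hypothesis $b\ge\tau c^2$ this reduces in the worst case to $\tau\zeta>1$, which is \emph{not} assumed. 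For instance, with $\tau=1$, $\zeta=\tfrac12$, $c^2=2$, $c_g^2=1$, $b=2$ all hypotheses of the lemma hold, yet no admissible $\epsilon$ exists, and your $2\times2$ quadratic-form test gives a nonpositive determinant for the same reason.

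The fix is precisely what the paper does: keep the $\|\nabla\eta\|_{L^2,g}^2$ term in $E_1$ and choose the Young split as
\[
|I|\le \frac{\tau^2(c^2-c_g^2)}{1+\varepsilon}\,\|\nabla v\|_{L^2}^2 + (1+\varepsilon)\,\|\nabla\eta\|_{L^2,g}^2.
\]
The ``$1$'' in $1+\varepsilon$ is cancelled exactly by the $\|\nabla\eta\|_{L^2,g}^2$ reserve already present in $E_1$, and only the leftover $\varepsilon\|\nabla\eta\|_{L^2,g}^2\le \tfrac{\varepsilon}{\zeta}\|\nabla\eta\|_{L^2,-g'}^2$ must be absorbed by $\tau\|\nabla\eta\|_{L^2,-g'}^2$. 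On the $\|\nabla v\|_{L^2}^2$ side one uses $b-\tau c_g^2\ge\tau(c^2-c_g^2)$ to get
\[
\tau(b-\tau c_g^2)-\frac{\tau^2(c^2-c_g^2)}{1+\varepsilon}\ \ge\ \tau^2(c^2-c_g^2)\,\frac{\varepsilon}{1+\varepsilon}\ >\ 0
\]
for every $\varepsilon>0$. Hence any $0<\varepsilon<\tau\zeta$ works, with no extra constraint on the parameters. Your upper-bound argument is fine as written.
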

\begin{proof}
	The proof follows analogously to the proof of~\cite[Lemma 3.1]{dell2016moore}; we include it here for completeness. To show \eqref{equiv E and E0}, we first have by Young's inequality  
	\begin{equation}
	\begin{aligned}
	\left|2\tau\int_{\R^n}\int_{0}^\infty g(s)\nabla \eta(s) \cdot \nabla v \ds\dx \right|
	\leq& \, \frac{\tau^{2}(c^2-c^2_g)}{\varepsilon+1}\Vert\nabla v\Vert^{2}_{L^2}+(\varepsilon+1)\dint_{0}^\infty g(s)\Vert\nabla\eta(s)\Vert^{2}_{L^2}\ds.
	\end{aligned}
	\end{equation}
	for every $\varepsilon>0$. By using assumption (G3) on the relaxation kernel $g$, we then have
		\begin{equation}
	\begin{aligned}
2\tau\int_{\R^n}\int_{0}^\infty g(s)\nabla \eta(s) \cdot \nabla v \ds\dx 
	\geq& \, \begin{multlined}[t]-\frac{\tau^{2}(c^2-c^2_g)}{\varepsilon+1}\Vert\nabla v\Vert^{2}_{L^2}- \|\nabla \eta\|^2_{L^2, g} \\
	-\frac{\varepsilon}{\zeta}\dint_{0}^\infty(- g'(s))\Vert\nabla\eta(s)\Vert^{2}_{L^2}\ds.\end{multlined}
	\end{aligned}
	\end{equation}
	Since $b-\tau c^2_g \, \geq\, \tau (c^2-c^2_g)$, by reducing $\varepsilon$, we obtain
	\begin{equation} \label{left-hand side}  
	\begin{aligned}
	E_1(t)\geq&\, \begin{multlined}[t]\dfrac{1}{2}\left[ c^2_g\Vert\nabla(\psi +\tau v)\Vert^{2}_{L^2} +\Vert v+\tau w\Vert^{2}_{L^2}+(\tau-\varepsilon/\zeta)\Vert \nabla\eta\Vert^{2}_{L^2, -g'}\right.  \\
	\left. +\varepsilon\tau (b- \tau c^2_g)/(1+\varepsilon) \Vert \nabla v\Vert^{2}_{L^2}
 \vphantom{\frac12}\right]. \end{multlined}
	\end{aligned}
	\end{equation}
Consequently, the left-hand side inequality in \eqref{equiv E and E0} holds. The right-hand side inequality follows analogously. 
\end{proof}
\noindent The next step is to derive a lower-order energy estimate for $E_1$. 
\begin{proposition} \label{Prop:E1} 
 Let $(\psi, v, w, \eta)$ be a smooth solution of the system \eqref{Main_System} with initial data \eqref{Main_System_IC}. Then the following estimate holds:
	\begin{equation} \label{Energy_Indentity}
	\frac{\textup{d}}{\dt}E_1(t)+( b -\tau c^2 )\Vert \nabla v(t)\Vert_{L^2}^{2}+\frac{%
		1}{2}\Vert \nabla \eta\Vert _{L^2, -g'}^{2}\leq \,
	|R^{(1)}(v + \tau w)| 
	\end{equation}
	for all $t\geq 0$, where the functional $R^{(1)}$ is defined in \eqref{Def_R}.
\end{proposition}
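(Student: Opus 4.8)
The plan is to derive the identity \eqref{Energy_Indentity} by the standard multiplier method, testing the third equation of \eqref{Main_System} with carefully chosen combinations of $\psi$, $v$, $w$ and the history variable $\eta$ so that the resulting terms reassemble into $\tfrac{\textup d}{\dt}E_1$. Concretely, I would multiply the equation $\tau w_t=-w+c^2_g\Delta\psi+b\Delta v+\int_0^\infty g(s)\Delta\eta(s)\ds+2kvw$ by $v+\tau w$ in $L^2(\R^n)$, integrate by parts in space, and use $\psi_t=v$, $v_t=w$ to rewrite the linear terms as time derivatives. The term $c^2_g(\nabla\psi,\nabla(v+\tau w))_{L^2}$ becomes $\tfrac{c^2_g}{2}\tfrac{\textup d}{\dt}\|\nabla(\psi+\tau v)\|_{L^2}^2$ up to a leftover $-\tau c^2_g\|\nabla v\|_{L^2}^2$; the term $b(\Delta v, v+\tau w)_{L^2}=-b\|\nabla v\|_{L^2}^2-\tfrac{b\tau}{2}\tfrac{\textup d}{\dt}\|\nabla v\|_{L^2}^2$; the term $(-w, v+\tau w)_{L^2}$ together with $\tau(w_t,v+\tau w)_{L^2}$ produces $\tfrac12\tfrac{\textup d}{\dt}\|v+\tau w\|_{L^2}^2$ plus a compensating $-\|v+\tau w\|_{L^2}^2+\ldots$; collecting the $\|\nabla v\|_{L^2}^2$ coefficients gives $-(b-\tau c^2_g)\|\nabla v\|_{L^2}^2$, and I will have to track that $b-\tau c^2>0$ survives after the history term is handled.

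The history term is the crux. Testing $\int_0^\infty g(s)\Delta\eta(s)\ds$ against $v+\tau w$ and integrating by parts gives $-\int_0^\infty g(s)(\nabla\eta(s),\nabla(v+\tau w))_{L^2}\ds$. The $\nabla v$ part of this needs to be combined with the mixed term $2\tau\int\int g(s)\nabla\eta\cdot\nabla v$ already present in $E_1$: differentiating that mixed term in time and using the transport equation $\eta_t=v-\eta_s$ produces, after integration by parts in $s$ (which is where (G1) and $\eta(s=0)=0$, $g$ decaying, are used), terms like $\int_0^\infty g'(s)\|\nabla\eta(s)\|_{L^2}^2\ds=-\|\nabla\eta\|_{L^2,-g'}^2$ and $\int_0^\infty g(s)(\nabla v,\nabla\eta(s))\ds$-type pieces that cancel against what came from the multiplier. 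The $\tau w$ part of the history test function, $-\tau\int_0^\infty g(s)(\nabla\eta(s),\nabla w)_{L^2}\ds$, is the genuinely delicate one: it does not obviously close, and the standard trick (cf.\ \cite{dell2016moore}) is to absorb it using $\tfrac{\textup d}{\dt}\|\nabla\eta\|_{L^2,g}^2=2\int_0^\infty g(s)(\nabla\eta(s),\nabla\eta_t(s))\ds$ together with $\eta_t=v-\eta_s$ and a further integration by parts in $s$, which is exactly why the $\|\nabla\eta\|_{L^2,g}^2$ term appears in the definition \eqref{energy} of $E_1$. The coefficient bookkeeping must be done so that the net dissipation in $\|\nabla\eta\|_{L^2,-g'}^2$ is at least $\tfrac12$, which is what the statement claims; assumption (G3), $g'\le-\zeta g$, guarantees $\|\nabla\eta\|_{L^2,g}^2\le\tfrac1\zeta\|\nabla\eta\|_{L^2,-g'}^2$ and hence that any $g$-weighted leftover can be dominated.

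After all linear terms are accounted for, the only remaining contribution is the nonlinearity $2k(vw,v+\tau w)_{L^2}=R^{(1)}(v+\tau w)$, which is moved to the right-hand side and estimated in absolute value, giving \eqref{Energy_Indentity}. I would present the computation as: (i) state the multiplier $v+\tau w$ and record the resulting identity for the non-history terms; (ii) handle the history term by differentiating the mixed term $2\tau\int\int g\,\nabla\eta\cdot\nabla v$ and the term $\|\nabla\eta\|_{L^2,g}^2$ in $E_1$ and performing the $s$-integration by parts; (iii) add everything, verify that the coefficient of $\|\nabla v\|_{L^2}^2$ is exactly $-(b-\tau c^2)$ (using $b=\delta+\tau c^2$ and $c^2_g<c^2$) and that the coefficient of $\|\nabla\eta\|_{L^2,-g'}^2$ is $\le-\tfrac12$; (iv) conclude. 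The main obstacle I anticipate is the precise tracking of constants in step (ii)–(iii): ensuring the sign-indefinite mixed term and the $s$-boundary terms from integration by parts combine to leave a clean $-(b-\tau c^2)\|\nabla v\|_{L^2}^2-\tfrac12\|\nabla\eta\|_{L^2,-g'}^2$, rather than something with an uncontrolled sign, and verifying that the boundary term at $s=\infty$ vanishes thanks to (G1)–(G3). This is where I would lean most heavily on the corresponding computation in \cite{dell2016moore}.
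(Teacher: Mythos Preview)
Your plan is essentially the paper's proof: test with $v+\tau w$, use $\eta_t=v-\eta_s$ (and $w=\eta_{tt}+\eta_{ts}$) to rewrite the memory terms, and integrate by parts in $s$ so that everything reassembles into $\tfrac{\textup d}{\dt}E_1$. The paper organizes the computation as three separate multiplications (by $-c_g^2\Delta(\psi+\tau v)$, by $-\tau(b-\tau c_g^2)\Delta v$, and by $v+\tau w$) and then adds, but this is equivalent to your single-multiplier approach; the cross term $\tau^2 c_g^2(\nabla v,\nabla w)$ you omitted after ``leftover $-\tau c_g^2\|\nabla v\|^2$'' combines with the analogous term from $b(\Delta v,v+\tau w)$ to give exactly $-\tfrac{\tau(b-\tau c_g^2)}{2}\tfrac{\textup d}{\dt}\|\nabla v\|^2$, which is the third piece of $E_1$.

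Two corrections on the memory bookkeeping. First, you have the roles of the two history pieces in $E_1$ swapped: the term $\|\nabla\eta\|_{L^2,g}^2$ arises from the $v$-part of the test (via $-\int g(s)(\nabla\eta,\nabla v)\ds=-\tfrac12\tfrac{\textup d}{\dt}\|\nabla\eta\|_{L^2,g}^2-\tfrac12\|\nabla\eta\|_{L^2,-g'}^2$), while the mixed term $2\tau\int\!\!\int g\,\nabla\eta\cdot\nabla v$ is what absorbs the $\tau w$-part after substituting $w=\eta_{tt}+\eta_{ts}$. Second, and more importantly, the estimate does \emph{not} close via (G3). After the back-and-forth substitutions $v=\eta_t+\eta_s$ and integration by parts in $s$, the paper arrives at the exact identity
\[
\tfrac12\tfrac{\textup d}{\dt}E_1(t)=-(b-\tau c^2)\|\nabla v\|_{L^2}^2-\tfrac12\|\nabla\eta\|_{L^2,-g'}^2-\tfrac{\tau}{2}\|\nabla\eta\|_{L^2,g''}^2+R^{(1)}(v+\tau w),
\]
and then drops the $g''$-weighted term using assumption (G4), $g''\ge 0$. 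There is no $g$-weighted leftover to dominate; the clean coefficient $\tfrac12$ on $\|\nabla\eta\|_{L^2,-g'}^2$ and the exact $b-\tau c^2$ on $\|\nabla v\|_{L^2}^2$ come out of the identity itself, with (G4) rather than (G3) supplying the final sign.
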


\begin{proof}
Looking at the definition \eqref{energy} of the energy $E_1$, we begin by obtaining an expression for $\tfrac12 c^2_g \tfrac{\textup{d}}{\textup{d}t}\|\nabla (\psi+\tau v)\|_{L^2}$.  It is clear that
\begin{equation} \label{eq1_}
\left(\psi+\tau v\right) _{t}=v+\tau w.
\label{eq1}
\end{equation}%
Multiplying the above equation by $-c^2_g \Delta \left( \psi+\tau v\right) $ and
integrating over $\mathbb{R}^{n}$ gives the identity
\begin{equation}  \label{eq2}
\begin{aligned}
&\dfrac{c^2_g }{2}\dfrac{\textup{d}}{\dt}\int_{\mathbb{R}^{n}}|\nabla (\psi+\tau
v)|^{2}\dx \\
=&\,\begin{multlined}[t] \tau c^2_g |\nabla v|^{2}+c^2_g \int_{\mathbb{R}%
	^{N}}\nabla \psi \cdot\nabla v \dx   
+\tau ^{2}c^2_g \int_{\mathbb{R}^{n}}\nabla v \cdot \nabla w \dx + \tau c^2_g
\int_{\mathbb{R}^{n}}\nabla w \cdot \nabla \psi \dx. \end{multlined}
\end{aligned}
\end{equation}%
To tackle the time derivative of the second term in the energy \eqref{energy}, we then multiply the second equation in the system \eqref{Main_System} by $-\tau
( b -\tau c^2_g )\Delta v$ and integrate over $\mathbb{R}^{n}$. By doing so, we obtain
\begin{equation} \label{eq_nabla_v}
\dfrac{1}{2}\tau ( b -\tau c^2_g )\dfrac{\textup{d}}{\dt}\int_{\mathbb{R}%
	^{n}}|\nabla v|^{2} \dx=\tau ( b -\tau c^2_g )\int_{\mathbb{R}%
	^{n}}\nabla w \cdot \nabla v \dx.  \label{eq3}
\end{equation}%
To handle the term $\tfrac{1}{2}\tfrac{\textup{d}}{\dt} \|v+\tau w\|_{L^2}^{2}$, we add the second equation in the system \eqref{Main_System} to the third one. Then the $w$ terms cancel out and we have 
\begin{align} \label{eq_1}
(v+\tau w)_{t}=b \Delta v+c^2_g \Delta \psi+\displaystyle%
\int_{0}^{\infty}g(s)\Delta \eta(s)\ds+2k vw. 
\end{align}%
Multiplying the above equation by $v+\tau w$ and integrating over $%
\mathbb{R}^{n}$ yields 
\begin{equation} 
\begin{aligned}\label{eq2}
&\frac{1}{2}\frac{\textup{d}}{\dt} \|v+\tau w\|_{L^2}^{2}\\
=& \, \begin{multlined}[t]-b
\int_{\mathbb{R}^{n}}|\nabla v|^{2}\dx- \tau b \int_{\mathbb{R}%
	^{n}}\nabla v \cdot \nabla w \dx-c^2_g \int_{\mathbb{R}^{n}}\nabla \psi  \cdot \nabla
v \dx  \\
-c^2_g  \tau \int_{\mathbb{R}^{n}}\nabla \psi \cdot \nabla w \dx-\int_{%
	\mathbb{R}^{n}} \int_{0}^{\infty}g(s) \nabla \eta
(s) \cdot \nabla v \ds\dx   \\
-\tau  \int_{\mathbb{R}^{n}} \int_{0}^{\infty 
}g(s) \nabla \eta(s) \cdot \nabla w
\ds \dx+  R^{(1)}(v + \tau w).   \end{multlined}
\end{aligned}
\end{equation}
We can further transform the first term on the right that contains the memory kernel by using the fact that $\eta _{t}+\eta _{s}=v$. Indeed, we have
\begin{align}
-\int_{\mathbb{R}^{n}}\int_{0}^{\infty}g(s)
\nabla \eta(s) \cdot \nabla v \ds \dx=& -\int_{%
	\mathbb{R}^{n}}\int_{0}^{\infty}g(s) \nabla \eta
(s) \cdot \nabla \eta _{t}(s) \ds\dx \\
& -\int_{\mathbb{R}^{n}} \int_{0}^{\infty}g(s)
\nabla \eta(s) \cdot \nabla \eta _{s}(s) \ds \dx.
\end{align}%
Integrating by parts with respect to $s$ in the second term on the right leads to
\begin{align}
-\int_{\mathbb{R}^{n}} \int_{0}^{\infty}g(s)
\nabla \eta(s)\cdot \nabla v \ds \dx=& -\dfrac{1}{2}\dfrac{%
	\textup{d}}{\dt}\int_{\mathbb{R}^{n}} \int_{0}^{\infty}g(s)|\nabla
\eta(s)|^{2} \ds\dx \\
& +\frac{1}{2}\int_{\mathbb{R}^{n}} \int_{0}^{\infty
}g^{\prime }(s)|\nabla \eta(s)|^{2} \ds \dx;
\end{align}
noting that the boundary terms vanish;
cf.~\cite{pata2009stability}.
Hence, we get 
\begin{align}
-\int_{\mathbb{R}^{n}}\int_{0}^{\infty}g(s)
\nabla \eta(s)\cdot \nabla v \ds \dx=& -\frac{1}{2}\dfrac{%
	\textup{d}}{\dt}\Vert \nabla \eta\Vert _{L^2,g}^{2} 
 -\frac{1}{2}\Vert \nabla \eta \Vert _{L^2, -g'}^{2}.
\end{align}%
Similarly, using the relation $\eta _{tt}+\eta _{ts}=w$ results in
\begin{align}
 &- \tau \int_{\mathbb{R}^{n}}\displaystyle\int_{0}^{\infty
}g(s) \nabla \eta(s) \cdot \nabla w \ds \dx \\
=& - \tau \int_{\mathbb{R}^{n}} \int_{0}^{\infty
}g(s) \nabla \eta(s) \cdot  \nabla \eta _{tt}(s)
\ds\dx - \tau \int_{\mathbb{R}^{n}} \int_{0}^{\infty
}g(s)\nabla \eta(s) \cdot  \nabla \eta _{ts}(s)
\ds \dx.
\end{align}%
Then, by integrating by parts with respect to $s$, we have 
\begin{equation}
\begin{aligned}
& - \tau \int_{\mathbb{R}^{n}} \int_{0}^{\infty
}g(s)\nabla \eta(s) \cdot  \nabla w \ds \dx \\
=&\, \begin{multlined}[t] - \tau \dfrac{\textup{d}}{\dt}\int_{\mathbb{R}^{n}} \int_{0}^{\infty
}g(s) \nabla \eta(s)\cdot  \nabla \eta _{t}(s)
\ds \dx 
+ \tau \int_{\mathbb{R}^{n}} \int_{0}^{\infty
}g(s)\nabla \eta _{t}(s) \cdot  \nabla \eta
_{t}(s) \ds \dx \\
 - \tau \dfrac{\textup{d}}{\dt}\int_{\mathbb{R}^{n}} \int_{0}^{\infty
}g(s)\nabla \eta(s) \cdot  \nabla \eta _{s}(s)\ds \dx  + \tau \int_{\mathbb{R}^{n}} \int_{0}^{\infty
}g(s) \nabla \eta _{t}(s)\cdot \nabla \eta
_{s}(s) \ds \dx. \end{multlined}
\end{aligned}
\end{equation}%
By inserting the derived identities into \eqref{eq2}, we infer
\begin{equation} \label{identity1}
\begin{aligned}
& \frac{1}{2}\dfrac{\textup{d}}{\dt}\left( \left\Vert v+\tau w\right\Vert _{L^{2}}^{2}+\|\nabla \eta\|^2_{L^2, g} +2 \tau \int_{\mathbb{R}^{n}}\displaystyle\int_{0}^{\infty
}g(s) \nabla \eta(s)\cdot \nabla v\ds
\dx\right)\\
=&\,\begin{multlined}[t]- b \left\Vert \nabla
v\right\Vert _{L^{2}}^{2}-\tau  b \int_{\mathbb{R}^{n}}\nabla
v \cdot \nabla w\dx-c^2_g
\int_{\mathbb{R}^{n}}\nabla \psi \cdot \nabla v\dx 
  \\ -c^2_g  \tau \int_{\mathbb{R}^{n}}\nabla \psi \cdot \nabla w\dx
-\frac{1}{2}\Vert \nabla \eta(s)\Vert _{L^2, -g'%
}^{2}+ \tau \int_{\mathbb{R}^{n}} \displaystyle\int_{0}^{t
}g(s) \nabla \eta _{t}(s) \cdot \nabla v\ds \dx\\+R^{(0)}(v + \tau w). \end{multlined}
\end{aligned}%
\end{equation}
By adding also equation \eqref{eq_nabla_v} to the above expression, we infer 
\begin{equation}
\begin{aligned}
\frac{1}{2}\dfrac{\textup{d}}{\dt}\left(E_1(t)-\tau \Vert \nabla \eta\Vert _{L^2, -g'%
}^{2}\right)
=&\, \begin{multlined}[t]-(b -\tau c^2_g )\left\Vert
\nabla v\right\Vert _{L^{2}}^{2}-\frac{1}{2}\Vert \nabla \eta 
(s)\Vert _{L^2, -g'}^{2}   \\
 +\tau \int_{\mathbb{R}^{n}}\displaystyle\int_{0}^{\infty
}g(s) \nabla \eta _{t}(s)\cdot \nabla v \ds\dx+2k(vw, v + \tau w)_{L^2}.\end{multlined}
\end{aligned}%
\end{equation}
To further transform the memory term on the right, we can substitute $\eta _{t}=v-\eta _{s}$. This action leads to
\begin{equation}
\begin{aligned}
\frac{1}{2}\dfrac{\textup{d}}{\dt}\left(E_1(t)-\tau \Vert \nabla \eta\Vert _{L^2,-g'%
}^{2}\right) =&\, \begin{multlined}[t]-( b -\tau c^2_g )\left\Vert
\nabla v\right\Vert _{L^{2}}^{2}-\frac{1}{2}\Vert \nabla \eta
(s)\Vert _{L^2, -g'}^{2} \\
+\tau \int_{\mathbb{R}^{n}} \int_{0}^{\infty}g(s)\nabla (v- \eta _{s}(s))\cdot \nabla v \ds\dx +R^{(1)}(v + \tau w). \end{multlined}
\end{aligned}
\end{equation}%
Integrating once by parts with respect to $s$ in the memory term yields
\begin{equation}
\begin{aligned}
\frac{1}{2}\dfrac{\textup{d}}{\dt}\left(E_1(t)-\tau \Vert \nabla \eta\Vert _{L^2,-g'%
}^{2}\right)
=& \, \begin{multlined}[t]-(b -\tau c^2_g )\left\Vert
\nabla v\right\Vert _{L^{2}}^{2}-\frac{1}{2}\Vert \nabla \eta
(s)\Vert _{L^2, -g'}^{2}+\tau (c^2 -c^2_g )\left\Vert \nabla
v\right\Vert _{L^{2}}^{2} \\
 +\tau \int_{\mathbb{R}^{n}} \int_{0}^{\infty}g^{\prime
}(s) \nabla \eta(s)\cdot \nabla v \ds
\dx+R^{(1)}(v + \tau w). \end{multlined}
\end{aligned}
\end{equation}%
We then again use the same trick of substituting $v=\eta _{t}+\eta _{s}$, which results in 
\begin{equation}
\begin{aligned}
\frac{1}{2}\dfrac{\textup{d}}{\dt}\left(E_1(t)-\tau \Vert \nabla \eta\Vert _{L^2, -g'%
}^{2}\right)
=& \, \begin{multlined}[t]-( b -\tau c^2 )\left\Vert
\nabla v\right\Vert _{L^{2}}^{2}-\frac{1}{2}\Vert \nabla \eta
(s)\Vert _{L^2, -g'}^{2}\\+\tau \int_{\mathbb{R}^{n}}
\int_{0}^{\infty}g^{\prime }(s) \nabla \eta(s) \cdot \nabla \eta
_{t} \ds\dx \\
 +\tau \int_{\mathbb{R}^{n}} \int_{0}^{\infty}g^{\prime
}(s) \nabla \eta(s)\cdot \nabla \eta _{s}
\ds \dx+R^{(1)}(v + \tau w).\end{multlined}
\end{aligned}
\end{equation}%
Finally, integrating by parts once again with respect to $s$ in the second memory term on the right leads to
\begin{equation}
\begin{aligned}
\frac{1}{2}\frac{\textup{d}}{\dt}E_1(t)
 = \, \begin{multlined}[t]-( b -\tau c^2 )\left\Vert
\nabla v\right\Vert _{L^{2}}^{2}-\frac{1}{2}\Vert \nabla \eta
\Vert _{L^2, -g'}^{2}-\frac{\tau }{2}\Vert \nabla \eta
\Vert _{L^2, g''}^{2}
+R^{(1)}(v + \tau w),\end{multlined}
\end{aligned}
\end{equation}%
which immediately yields \eqref{Energy_Indentity}, as desired.
\end{proof}

%%%%%%%%%%%%%%%%SECOND ENERGY
\subsection{Higher-order estimates} Next we analogously define the energy of the second order at time $t \geq 0$ as
\begin{equation} \label{E_2}
\begin{aligned}
E_{2}(t)
 =&\, \begin{multlined}[t]\frac{1}{2}\left[\vphantom{\int_{0}^{%
 		\infty}} c^2_g\left\Vert \Delta
 ( \psi+\tau v)\right\Vert _{L^{2}}^{2}+\tau(b -\tau c^2_g )\left\Vert
\Delta v\right\Vert _{L^{2}}^{2}+\Vert \nabla (v+\tau w)\Vert
_{L^2}^2\right. \\ \left.+\tau \Vert \Delta \eta \Vert^2_{L^2, -g'}    
+\Vert \Delta \eta\Vert^2_{L^{2}, g} +2\tau \int_{\mathbb{R}^{n}}\int_{0}^{%
\infty}g(s)\Delta v\Delta \eta(s)\ds\dx\right]. \end{multlined}
\end{aligned}
\end{equation}%
Observe that the last term above has an undefined sign; nevertheless, the other terms in the energy functional can absorb it. In fact, the functional $E_2$ is equivalent to $\mathscr{E}_2=\mathscr{E}_2[\vecc{\Psi}]$, which we introduced in \eqref{E_2_Eqv}. 
\begin{lemma} \label{Lemma_EquivE_1}
Assume that $b\geq \tau c^2> \tau c^2_g$.  Then there exist positive constants $C_{1}$ and $C_{2}$, such that
\begin{equation}\label{equiv E and V}
	C_{1} \mathscr{E}_2(t)\leq E_2(t)\leq C_{2} \mathscr{E}_2(t), 
\end{equation} 
for all $t\geq0.$
\end{lemma}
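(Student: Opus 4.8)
The plan is to mimic the proof of Lemma~\ref{Lemma_EquivE_0} line by line, replacing the gradient $\nabla$ throughout by the Laplacian $\Delta$. The assertion is a purely functional norm equivalence at a frozen time $t$, so the PDE itself plays no role; the only structural ingredients are assumption (G3) on the kernel and the parameter inequality $b\geq\tau c^2>\tau c^2_g$.

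First I would compare the two functionals term by term. Every summand of $E_2$ in \eqref{E_2} other than the last one has a fixed sign: $c^2_g\Vert\Delta(\psi+\tau v)\Vert_{L^2}^2$, $\tau(b-\tau c^2_g)\Vert\Delta v\Vert_{L^2}^2$ (positive since $b>\tau c^2_g$), $\Vert\nabla(v+\tau w)\Vert_{L^2}^2$, $\tau\Vert\Delta\eta\Vert_{L^2,-g'}^2$, and $\Vert\Delta\eta\Vert_{L^2,g}^2\geq0$; each of these is, up to a constant, one of the summands of $\mathscr{E}_2$, using also that $\Vert\Delta\eta\Vert_{L^2,g}^2\leq\tfrac1\zeta\Vert\Delta\eta\Vert_{L^2,-g'}^2$ by (G3) to dominate the fifth term by $\mathscr{E}_2$. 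The only term requiring care is the indefinite-sign cross term $2\tau\int_{\R^n}\int_0^\infty g(s)\,\Delta v\,\Delta\eta(s)\ds\dx$.

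For the lower bound $C_1\mathscr{E}_2(t)\leq E_2(t)$, I would estimate this cross term exactly as in Lemma~\ref{Lemma_EquivE_0}: by Young's inequality, for every $\varepsilon>0$,
\[
\left|2\tau\int_{\R^n}\int_0^\infty g(s)\,\Delta v\,\Delta\eta(s)\ds\dx\right|\leq\frac{\tau^2(c^2-c^2_g)}{\varepsilon+1}\Vert\Delta v\Vert_{L^2}^2+(\varepsilon+1)\int_0^\infty g(s)\Vert\Delta\eta(s)\Vert_{L^2}^2\ds,
\]
and then use (G3) to bound $\int_0^\infty g(s)\Vert\Delta\eta(s)\Vert_{L^2}^2\ds$ by $\Vert\Delta\eta\Vert_{L^2,g}^2+\tfrac{\varepsilon}{\zeta}\Vert\Delta\eta\Vert_{L^2,-g'}^2$. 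Since $b-\tau c^2_g\geq\tau(c^2-c^2_g)$ — which is precisely the hypothesis $b\geq\tau c^2$ — one can reduce $\varepsilon$ so that both $\varepsilon\tau(b-\tau c^2_g)/(1+\varepsilon)\Vert\Delta v\Vert_{L^2}^2$ and $(\tau-\varepsilon/\zeta)\Vert\Delta\eta\Vert_{L^2,-g'}^2$ remain strictly positive, yielding the desired lower bound. The upper bound $E_2(t)\leq C_2\mathscr{E}_2(t)$ follows analogously, applying Young's inequality in the other direction and again using $\Vert\Delta\eta\Vert_{L^2,g}^2\leq\tfrac1\zeta\Vert\Delta\eta\Vert_{L^2,-g'}^2$. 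There is no genuine obstacle here; this is a routine higher-order transcription of Lemma~\ref{Lemma_EquivE_0} (equivalently of \cite[Lemma 3.1]{dell2016moore}). The only point to be attentive to is that trading $\int g(s)\Vert\cdot\Vert_{L^2}^2\ds$ for $\Vert\cdot\Vert_{L^2,-g'}^2$ costs a factor $1/\zeta$, so $\varepsilon$ must be taken smaller than $\zeta\tau$ as well as below the absorption threshold for the $\Vert\Delta v\Vert_{L^2}^2$ term, the latter slack being exactly what $b\geq\tau c^2$ provides.
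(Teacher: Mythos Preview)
Your proposal is correct and matches the paper's approach exactly: the paper's proof simply states that it ``follows the same steps as [the] proof of Lemma~\ref{Lemma_EquivE_0}'' and omits the details, which is precisely the $\nabla\to\Delta$ transcription you carry out. Your observations about where the hypotheses $b\geq\tau c^2$ and (G3) enter are accurate and fill in what the paper leaves implicit.
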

\begin{proof}
The proof follows the same steps as proof of Lemma~\ref{Lemma_EquivE_0}. We omit the details here.
\end{proof}
\noindent We move onto the derivation of a higher-order energy estimate for $E_2$, analogous to the one of Proposition~\ref{Prop:E1}.
\begin{proposition} \label{Prop:E2} Let $(\psi, v, w, \eta)$ be a smooth solution of the system \eqref{Main_System} with initial data \eqref{Main_System_IC}. Then the following inequality holds: 
\begin{equation}  \label{dE_1_Dt}
\frac{\textup{d}}{\dt}E_2(t)+\left(b -\tau c^2 \right) \left\Vert
\Delta v\right\Vert _{L^{2}}^{2}+\frac{1}{2}\Vert \Delta \eta
\Vert _{L^2, -g'}^{2}\leq \, |R^{(2)}(v + \tau w)|,  
\end{equation}%
for all $t\geq 0$, where the functional $R^{(2)}$ is defined in \eqref{Def_R}.
\end{proposition}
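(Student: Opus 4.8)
The plan is to reproduce the argument of Proposition~\ref{Prop:E1} one differentiation order higher: wherever that proof takes a single spatial derivative we take two, so $\nabla\psi$ and $\nabla v$ become $\Delta\psi$ and $\Delta v$ while $\nabla w$ is retained, matching the structure of $E_2$ in~\eqref{E_2}. Concretely, I would generate the time derivative of each summand of $E_2$ by testing the equations of~\eqref{Main_System} against suitable higher-order multipliers: apply $\Delta$ to $(\psi+\tau v)_t=v+\tau w$ and test with $c^2_g\Delta(\psi+\tau v)$ to produce $\tfrac{c^2_g}{2}\tfrac{\textup{d}}{\dt}\norm{\Delta(\psi+\tau v)}_{L^2}^2$; apply $\Delta$ to $v_t=w$ and test with $\tau(b-\tau c^2_g)\Delta v$ to produce $\tfrac12\tau(b-\tau c^2_g)\tfrac{\textup{d}}{\dt}\norm{\Delta v}_{L^2}^2$; and, adding the second and third equations of~\eqref{Main_System} as in~\eqref{eq_1}, test the identity $(v+\tau w)_t=b\Delta v+c^2_g\Delta\psi+\int_0^\infty g(s)\Delta\eta(s)\ds+2kvw$ against $-\Delta(v+\tau w)$ to produce $\tfrac12\tfrac{\textup{d}}{\dt}\norm{\nabla(v+\tau w)}_{L^2}^2$. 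All the integrations by parts in $x$ are legitimate under the standing assumption $\abs{\vecc{\Psi}}_{\mathcal{E}(T)}<\infty$ and are made rigorous on the approximate solutions of Section~\ref{Sec:LocalExistence}.

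Summing the three identities, exactly the same cancellations occur as at the $\nabla$-level in Proposition~\ref{Prop:E1}: the indefinite cross terms $\int_{\R^n}\Delta\psi\,\Delta v\dx$, $\int_{\R^n}\Delta\psi\,\Delta w\dx$ and $\int_{\R^n}\Delta v\,\Delta w\dx$ all drop out by the same elementary identities, the only non-trivial one being $\tau^2 c^2_g-\tau b+\tau(b-\tau c^2_g)=0$, while the $\norm{\Delta v}_{L^2}^2$ coefficients combine, once the memory manipulation below supplies the extra $\tau(c^2-c^2_g)\norm{\Delta v}_{L^2}^2$, into the dissipative term $-(b-\tau c^2)\norm{\Delta v}_{L^2}^2$. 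The only genuinely new ingredient is the nonlinearity: the term $2kvw$, tested against $-\Delta(v+\tau w)$, becomes $2k(\nabla(vw),\nabla(v+\tau w))_{L^2}=R^{(2)}(v+\tau w)$ after one integration by parts, which is precisely the right-hand side of~\eqref{dE_1_Dt}.

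The delicate part is the bookkeeping for the two memory terms $-\int_{\R^n}\int_0^\infty g(s)\Delta\eta(s)\,\Delta v\ds\dx$ and $-\tau\int_{\R^n}\int_0^\infty g(s)\Delta\eta(s)\,\Delta w\ds\dx$ arising from the third test. I would apply $\Delta$ to the supplementary equation $\eta_t+\eta_s=v$ and to its $t$-derivative, giving $\Delta\eta_t+\Delta\eta_s=\Delta v$ and $\Delta\eta_{tt}+\Delta\eta_{ts}=\Delta w$, substitute these in, and integrate by parts repeatedly in $s$, each time checking that the boundary contributions vanish: at $s=0$ because $\eta(s=0)=0$, and at $s=\infty$ because $g$ decays exponentially while $\Delta\eta\in L^2_{-g'}$; cf.~\cite{pata2009stability}. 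Assumption (G3) then turns one of these into $-\tfrac12\norm{\Delta\eta}_{L^2,-g'}^2$ together with $\tfrac{\textup{d}}{\dt}$ of the weighted summands $\norm{\Delta\eta}_{L^2,g}^2$ and $2\tau\int_{\R^n}\int_0^\infty g(s)\Delta v\,\Delta\eta(s)\ds\dx$ of $E_2$, and a further integration by parts yields $-\tfrac{\tau}{2}\norm{\Delta\eta}_{L^2,g''}^2\le 0$ by (G4). Collecting everything produces the identity
\[
\frac{\textup{d}}{\dt}E_2(t)+(b-\tau c^2)\norm{\Delta v(t)}_{L^2}^2+\frac{1}{2}\norm{\Delta\eta}_{L^2,-g'}^2+\frac{\tau}{2}\norm{\Delta\eta}_{L^2,g''}^2=R^{(2)}(v+\tau w),
\]
and discarding the nonnegative $g''$-term and bounding the right-hand side by its modulus gives~\eqref{dE_1_Dt}. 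I expect the main obstacle to be organizational rather than conceptual: carrying out the several $s$-integrations by parts for both memory terms and verifying that every boundary term vanishes at the $\nabla^2$ level; the whole computation is formal, to be justified in Section~\ref{Sec:LocalExistence}.
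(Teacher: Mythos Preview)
Your proposal is correct and follows essentially the same approach as the paper: the same three multipliers $c^2_g\Delta(\psi+\tau v)$, $\tau(b-\tau c^2_g)\Delta v$, and $-\Delta(v+\tau w)$ are used (the paper equivalently applies $\Delta$ to \eqref{eq_1} first and then tests with $-(v+\tau w)$), the same substitutions $\Delta v=\Delta\eta_t+\Delta\eta_s$ and $\Delta w=\Delta\eta_{tt}+\Delta\eta_{ts}$ drive the memory bookkeeping, and the same final identity with the nonnegative $\tfrac{\tau}{2}\norm{\Delta\eta}_{L^2,g''}^2$ term is obtained before dropping it. One minor remark: the appearance of $-\tfrac{1}{2}\norm{\Delta\eta}_{L^2,-g'}^2$ comes purely from the $s$-integration by parts of $g(s)\Delta\eta\,\Delta\eta_s$, not from (G3); assumption (G3) is only needed to ensure $-g'\ge 0$ so that this quantity sits on the correct side of the inequality.
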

\begin{proof}
The proof follows by testing our problem with suitable test functions. Looking at the definition \eqref{E_2} of the higher-order energy, we first need to tackle the time derivative of the term $\tfrac12 c^2_g\|\Delta (\psi+\tau v)(t)\|^2_{L^2}$. Clearly,
\begin{equation}
\Delta (\psi+\tau v)_{t}=\Delta (v+\tau w).
\label{Equation_u_v_Lap}
\end{equation}%
Multiplying the above equation by $\Delta \left(\psi+\tau
v\right) $ and integrating over $\mathbb{R}^{n}$ results in
\begin{equation} \label{E_2_Second_Term}
\begin{aligned}
\frac{1}{2}\frac{\textup{d}}{\dt}\int_{\mathbb{R}^{n}}|\Delta (\psi+\tau
v)|^{2}\dx   
=&\, \begin{multlined}[t]\tau \int_{\mathbb{R}^{n}}\Delta w\Delta \psi\dx+\tau ^{2}\int_{%
	\mathbb{R}^{n}}\Delta w\Delta v\dx\\
+\int_{\mathbb{R}^{n}}\Delta
v\Delta \psi\dx+ \tau \int_{\mathbb{R}^{n}}|\Delta v|^{2}\dx. \end{multlined}
\end{aligned}
\end{equation}%
Next we work with the time derivative of $\|\Delta v(t)\|^2_{L^2}$. By applying the Laplacian to the second equation of the system \eqref{Main_System}, multiplying the resulting expression by $-\tau (b -\tau c^2_g )\Delta v$,
integrating over $\mathbb{R}^{n}$, and using integration by parts, we find 
\begin{equation} \label{v_Estimate_E_2}
\frac{1}{2}\tau (b -\tau c^2_g )\frac{\textup{d}}{\dt}\int_{\mathbb{R}%
^{n}}|\Delta v|^{2}\dx=\tau (b -\tau c^2_g )\int_{\mathbb{R}%
^{n}}\Delta w \, \Delta v \dx.  
\end{equation}%
To handle the time derivative of the third term in \eqref{E_2},  we apply the operator  $\Delta $ to \eqref{eq_1}
we get (in the sense of distribution)
\begin{align} 
(\Delta( v+\tau w))_{t}=b \Delta^2 v+c^2_g \Delta^2 \psi+\displaystyle%
\int_{0}^{\infty}g(s)\Delta^2 \eta(s)\ds+2k \Delta(vw). 
\end{align}
We multiply the above equation  by $ -(v+\tau w)$ and integrate over $\mathbb{R}^{n}$, yielding
\begin{equation} \label{E_2_First_Term}
\begin{aligned}
&\frac{1}{2}\frac{\textup{d}}{\dt}\int_{\mathbb{R}^{n}}|\nabla ( v+\tau
w)|^{2}\dx+b \int_{\mathbb{R}^{n}}\left\vert \Delta v\right\vert
^{2}\dx   \\
=&\, \begin{multlined}[t]-b \tau \int_{\mathbb{R}^{n}}\Delta v\Delta w\dx-c^2_g \int_{\mathbb{R}%
	^{n}}\Delta \psi\Delta (v+\tau w)\dx   \\
-\int_{\mathbb{R}^{n}}\int_{0}^{\infty}g(s)\Delta (v+\tau w)\Delta
\eta(s)ds\dx +R^{(2)}(v + \tau w). \end{multlined}
\end{aligned}
\end{equation}%
By summing up \eqref{E_2_First_Term}$+$\eqref{v_Estimate_E_2} $%
+c^2_g$\eqref{E_2_Second_Term}, we obtain 
\begin{equation} \label{E_2_1}
\begin{aligned}
&\begin{multlined}[t]\frac{1}{2}\frac{\textup{d}}{\dt}\left[ \left\Vert \nabla ( v+\tau
w)\right\Vert _{L^{2}}^{2}+\tau (b -\tau c^2_g
)\left\Vert \Delta v\right\Vert _{L^{2}}^{2}+c^2_g \left\Vert
\Delta ( \psi+\tau v)\right\Vert _{L^{2}}^{2}\right] \\
+\left( b
-\tau c^2_g \right) \left\Vert \Delta v\right\Vert _{L^{2}}^{2} \end{multlined}  \\
=&-\int_{\mathbb{R}^{n}}\int_{0}^{\infty}g(s)\Delta ( v+\tau
w)\Delta \eta(s)\ds\dx+R(\Delta( v+\tau
w)) \\
=&\,\begin{multlined}[t]- \int_{\mathbb{R}^{n}}\int_{0}^{\infty}g(s)\Delta v\Delta \eta
(s)\ds\dx-\tau \int_{\mathbb{R}^{n}}\int_{0}^{\infty}g(s)\Delta w\Delta
\eta(s)\ds\dx\\
+R^{(2)}(v + \tau w). \end{multlined}
\end{aligned}
\end{equation}
We next want to further transform the first two terms on the right-hand side. By taking the Laplacian of the last equation in \eqref{Main_System}, we obtain $\Delta v=\Delta \eta _{t}+\Delta \eta _{s}$. We can then use this relation to find that
\begin{equation}
\begin{aligned}
\int_{\mathbb{R}^{n}}\int_{0}^{\infty}g(s)\Delta v\Delta \eta
(s)\ds\dx =&\, \begin{multlined}[t]\frac{1 }{2}\frac{\textup{d}}{\dt}\int_{\mathbb{R}%
^{n}}\int_{0}^{\infty}g(s)|\Delta \eta(s)|^{2}\ds\dx \\
+ \int_{\mathbb{R}^{n}}\int_{0}^{\infty}g(s)\Delta \eta
_{s}\, \Delta \eta(s)\ds\dx. \end{multlined}
\end{aligned}
\end{equation}%
By integrating by parts with respect to $s$ in the last term, we infer
\begin{equation} \label{First_Term_Integral}
\begin{aligned}
\int_{\mathbb{R}^{n}}\int_{0}^{\infty}g(s)\Delta v\Delta \eta
(s)\ds\dx 
=&\,\frac{1}{2}\frac{\textup{d}}{\dt}\Vert \Delta \eta
\Vert _{L^2, g}^{2} +\frac{1 }{2}\int_{\mathbb{R}^{n}}\int_{0}^{\infty}g^{\prime
}(s)|\Delta \eta(s)|^{2}\ds\dx   \\[1mm]
=&\,\frac{1 }{2}\frac{\textup{d}}{\dt}\Vert \Delta \eta
\Vert _{L^2, g}^{2}-\frac{1}{2}\Vert \Delta \eta
\Vert _{L^2, -g'}^{2}.  
\end{aligned}
\end{equation}%
To tackle the second memory term on the right in equation \eqref{E_2_1}, we can use the relation $\Delta w=\Delta \eta _{tt}+\Delta \eta _{ts}$, which holds in the sense of distribution.  Doing so yields
\begin{equation} 
\begin{aligned}
&\tau \int_{\mathbb{R}^{n}}\int_{0}^{\infty}g(s)\Delta w\Delta \eta  
(s)\ds\dx   \\
=&\, \begin{multlined}[t]\tau \frac{\textup{d}}{\dt}\int_{\mathbb{R}^{n}}\int_{0}^{\infty}g(s)\Delta \eta
_{t}\Delta \eta(s)\ds\dx-\tau \int_{\mathbb{R}^{n}}\int_{0}^{\infty
}g(s)\Delta \eta _{t}(s)\Delta \eta _{t}(s)\ds\dx   \\
+\tau \frac{\textup{d}}{\dt}\int_{\mathbb{R}^{n}}\int_{0}^{\infty}g(s)\Delta \eta
_{s}\Delta \eta(s)\ds\dx-\tau \int_{\mathbb{R}^{n}}\int_{0}^{\infty
}g(s)\Delta \eta _{s}\Delta \eta _{t}(s)\ds\dx.  \end{multlined} 
\end{aligned}
\end{equation}
Since $v= \eta_t+\eta_s$, we further have
\begin{equation} \label{Second_Term_Integra}
\begin{aligned}
&\tau \int_{\mathbb{R}^{n}}\int_{0}^{\infty}g(s)\Delta w\Delta \eta  
(s)\ds\dx   \\
=&\, \tau \frac{\textup{d}}{\dt}\int_{\mathbb{R}^{n}}\int_{0}^{\infty}g(s)\Delta
v\Delta \eta(s)\ds\dx-\tau \int_{\mathbb{R}^{n}}\int_{0}^{\infty
}g(s)\Delta v\Delta \eta _{t}(s)\ds\dx.  
\end{aligned}
\end{equation}
Consequently, from \eqref{E_2_1}, \eqref{First_Term_Integral} and \eqref{Second_Term_Integra}, we have 
\begin{eqnarray*}
&&\frac{\textup{d}}{\dt}(E_2(t)-\frac{\tau}{2} \Vert \Delta \eta\Vert_{L^2, -g'}^2)+\left( b -\tau c^2_g \right) \left\Vert
\Delta v\right\Vert _{L^{2}}^{2}+\frac{1 }{2}\Vert \Delta \eta
\Vert _{L^2, -g'}^{2} \\
&=&\tau \int_{\mathbb{R}^{n}}\int_{0}^{\infty}g(s)\Delta v\Delta \eta
_{t}(s)\ds\dx+R^{(2)}(v + \tau w).  
\end{eqnarray*}
By using the fact that   
\begin{equation}
\begin{aligned}
\tau \int_{\mathbb{R}^{n}}\int_{0}^{\infty}g(s)\Delta v\Delta \eta
_{t}(s)\ds\dx =& \, \tau \int_{\mathbb{R}^{n}}\int_{0}^{\infty}g(s)\Delta
v(\Delta v-\Delta \eta _{s}(s))\ds\dx \\
=& \, \begin{multlined}[t] \tau \left( c^2 -c^2_g \right) \left\Vert \Delta v\right\Vert
_{L^{2}}^{2}
+\tau \int_{\mathbb{R}^{n}}\int_{0}^{\infty}g^{\prime}(s)\Delta v\Delta \eta(s)\ds\dx, \end{multlined}
\end{aligned}
\end{equation}%
we find that 
\begin{equation}\label{dE_1_dt_1}
\begin{aligned}
&\frac{\textup{d}}{\dt}(E_2(t)-\frac{\tau}{2} \Vert \Delta \eta\Vert_{L^2, -g'}^2)+\left( b -\tau c^2 \right) \left\Vert
\Delta v\right\Vert _{L^{2}}^{2}+\frac{1 }{2}\Vert \Delta \eta
\Vert _{L^2, -g'}^{2} \\
=&\, \tau \int_{\mathbb{R}^{n}}\int_{0}^{\infty}g^{\prime }(s)\Delta v\Delta
\eta(s)\ds\dx+R^{(2)}(v + \tau w).
\end{aligned}
\end{equation}%
The term on the right-hand side  of \eqref{dE_1_dt_1}  can be written as, by using the fact that $\Delta v=\Delta \eta+\Delta \eta_s$,  
\begin{equation}
\begin{aligned}
&\tau \int_{\mathbb{R}^{n}}\int_{0}^{\infty}g^{\prime }(s)\Delta v\Delta
\eta(s)\ds\dx\\
 =&\, \begin{multlined}[t]\tau \int_{\mathbb{R}^{n}}\int_{0}^{\infty}g^{\prime }(s)\Delta
\eta(s)\Delta\eta_t(s)\ds\dx 
+ \tau \int_{\mathbb{R}^{n}}\int_{0}^{\infty}g^{\prime }(s)\Delta
\eta(s)\Delta
\eta_s\ds\dx \end{multlined} \\
=&\, -\frac{\tau}{2}\frac{\textup{d}}{\dt}\Vert \Delta 
 \eta\Vert _{L^2, -g'}^{2}-\frac{\tau }{2}\Vert \Delta 
 \eta\Vert _{L^2, g''}^{2},
\end{aligned}
\end{equation}
where we integrated by parts with respect to $s$ in the second term. By plugging this identity into \eqref{dE_1_dt_1},  we deduce \eqref{dE_1_Dt}. This finishes the proof of Proposition~\ref{Prop:E2}. 
\end{proof} 
In order to capture the dissipation of the terms $\Vert \Delta( \psi+\tau v) \Vert_{L^2}$ and $\Vert \nabla(v+\tau w) \Vert_{L^2}$, we introduce two functionals $F_{1}$ and $F_2$ as
\begin{equation} \label{F_Functionals}
\begin{aligned} 
F_{1}(t)=\,\int_{\mathbb{R}^{n}}\nabla ( \psi+\tau v)\cdot \nabla (v+\tau w)\dx, \qquad
F_{2}(t)=\,-\tau \int_{\mathbb{R}^{n}} \nabla v  \cdot  \nabla (v+\tau w) \dx, 
\end{aligned}
\end{equation}
everywhere in time; see also~\cite{Racke_Said_2019}. We prove their properties in the following two lemmas. 
\begin{lemma}
\label{Lemma_F_1} Let $(\psi, v, w, \eta)$ be a smooth solution of the system \eqref{Main_System} with initial data \eqref{Main_System_IC}. For any $\epsilon _{0},\epsilon _{1}>0,$ it holds
\begin{equation} \label{F_1_Estimate}
\begin{aligned}
&\frac{\textup{d}}{\dt}F_{1}(t)+(c^2_g -\epsilon _{0}-(c^2 -c^2_g )\epsilon
_{1})\Vert \Delta (\psi+\tau v)\Vert _{L^{2}}^{2} \\
\leq&\, \begin{multlined}[t] \Vert \nabla (v+\tau w)\Vert _{L^{2}}^{2}+C(\epsilon
_{0})\Vert \Delta v\Vert _{L^{2}}^{2}
+C(\epsilon _{1})\Vert \Delta \eta\Vert _{L^{2},g}^{2}+|R^{(2)}(\psi + \tau v)|.\end{multlined}
\end{aligned}
\end{equation}%
\end{lemma}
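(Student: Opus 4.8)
The plan is to differentiate $F_1$ along the flow and read off the coefficient of $\|\Delta(\psi+\tau v)\|_{L^2}^2$ as the leading dissipative term. Since $F_1(t)=\int_{\mathbb{R}^n}\nabla(\psi+\tau v)\cdot\nabla(v+\tau w)\dx$, the product rule gives
\begin{equation*}
\frac{\textup{d}}{\dt}F_1(t)=\int_{\mathbb{R}^n}\nabla(\psi+\tau v)_t\cdot\nabla(v+\tau w)\dx+\int_{\mathbb{R}^n}\nabla(\psi+\tau v)\cdot\nabla(v+\tau w)_t\dx.
\end{equation*}
For the first integral I would use $(\psi+\tau v)_t=v+\tau w$ from \eqref{eq1_}, which immediately produces $\|\nabla(v+\tau w)\|_{L^2}^2$. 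For the second integral I would substitute the evolution law \eqref{eq_1}, i.e.\ $(v+\tau w)_t=b\Delta v+c^2_g\Delta\psi+\int_0^\infty g(s)\Delta\eta(s)\ds+2kvw$, and then integrate by parts once in space so that each $\nabla(\cdot)\cdot\nabla\Delta(\cdot)$ term becomes $-\Delta(\cdot)\Delta(\cdot)$; the nonlinear contribution $\int_{\mathbb{R}^n}\nabla(\psi+\tau v)\cdot\nabla(2kvw)\dx$ is then precisely $R^{(2)}(\psi+\tau v)$ by \eqref{Def_R}.

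The key algebraic step is extracting the good term from the $c^2_g\Delta\psi$ contribution: writing $\Delta\psi=\Delta(\psi+\tau v)-\tau\Delta v$ turns $-c^2_g\int\Delta(\psi+\tau v)\Delta\psi\dx$ into $-c^2_g\|\Delta(\psi+\tau v)\|_{L^2}^2$ plus a cross term $c^2_g\tau\int\Delta(\psi+\tau v)\Delta v\dx$. Combined with the $-b\int\Delta(\psi+\tau v)\Delta v\dx$ coming from the $b\Delta v$ term, all remaining non-memory, non-nonlinear contributions have the form $(\text{const})\int\Delta(\psi+\tau v)\Delta v\dx$, which Young's inequality bounds by $\epsilon_0\|\Delta(\psi+\tau v)\|_{L^2}^2+C(\epsilon_0)\|\Delta v\|_{L^2}^2$. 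Moving the term $-c^2_g\|\Delta(\psi+\tau v)\|_{L^2}^2$ to the left-hand side produces the dissipative term.

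It remains to control the memory term $-\int_{\mathbb{R}^n}\Delta(\psi+\tau v)\int_0^\infty g(s)\Delta\eta(s)\ds\dx$. Here I would apply the Cauchy--Schwarz inequality in $s$ with weight $g$, using $\int_0^\infty g(s)\ds=c^2-c^2_g$ from (G2), to obtain the pointwise-in-$x$ bound $\bigl|\int_0^\infty g(s)\Delta\eta(s)\ds\bigr|\leq(c^2-c^2_g)^{1/2}\bigl(\int_0^\infty g(s)|\Delta\eta(s)|^2\ds\bigr)^{1/2}$, followed by Cauchy--Schwarz in $x$ and Young's inequality with parameter $\epsilon_1$; this yields $(c^2-c^2_g)\epsilon_1\|\Delta(\psi+\tau v)\|_{L^2}^2+C(\epsilon_1)\|\Delta\eta\|_{L^2,g}^2$. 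Collecting the three $\|\Delta(\psi+\tau v)\|_{L^2}^2$ contributions on the left with total coefficient $c^2_g-\epsilon_0-(c^2-c^2_g)\epsilon_1$ gives exactly \eqref{F_1_Estimate}.

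The computation is essentially routine once the integrations by parts are set up (and they are justified by the smoothness assumed on $(\psi,v,w,\eta)$); the one point requiring care is the sign bookkeeping so that the coefficient $c^2_g-\epsilon_0-(c^2-c^2_g)\epsilon_1$ comes out positive for $\epsilon_0,\epsilon_1$ small. This positivity is precisely the reason for introducing $F_1$: the higher-order energy $E_2$ of Proposition~\ref{Prop:E2} by itself does not detect any dissipation of $\|\Delta(\psi+\tau v)\|_{L^2}$, and a similar auxiliary functional $F_2$ will be needed for $\|\nabla(v+\tau w)\|_{L^2}$.
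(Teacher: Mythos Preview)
Your proof is correct and follows essentially the same route as the paper: differentiate $F_1$ via the product rule, use $(\psi+\tau v)_t=v+\tau w$ and the evolution law \eqref{eq_1}, integrate by parts to convert to Laplacians, rewrite $c_g^2\Delta\psi+b\Delta v=c_g^2\Delta(\psi+\tau v)+(b-\tau c_g^2)\Delta v$ to extract the good term, and then apply Young's inequality to the cross term and the memory term. Your treatment of the memory term via weighted Cauchy--Schwarz in $s$ (with $\int_0^\infty g=c^2-c_g^2$) is in fact more explicit than the paper, which simply invokes Young's inequality at that step.
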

\begin{proof}
	We first compute the derivative of the functional $F_1$ as
	\begin{equation} \label{dF_1}
	\begin{aligned}
	\frac{\textup{d}}{\dt}F_{1}(t)
	=&\,\begin{multlined}[t] -\int_{\mathbb{R}^{n}}\Delta ( \psi+\tau v) (v+\tau w)_t\dx-\int_{\mathbb{R}^{n}} (\psi+\tau v)_t \Delta (v+\tau w)\dx.\end{multlined}
	\end{aligned}
	\end{equation}
We clearly have to further transform the two terms on the right-hand side. Recall that 
\begin{equation} 
	(v+\tau w)_{t}=b \Delta v+c^2_g \Delta \psi+\displaystyle%
	\int_{0}^{\infty}g(s)\Delta \eta(s)\ds+2k vw.
\end{equation}%
Multiplying this equation by $-\Delta \left( 
\psi+\tau v\right) $ and integrating over $\R^n$ leads to
\begin{equation} \label{Lemma_identity1}
\begin{aligned}
&-\int_{\mathbb{R}^{n}} \Delta (\psi+\tau v)\, (v+\tau w)_{t} \dx \\
=&\, \begin{multlined}[t]- \int_{\mathbb{R}^{n}}(c^2_g \Delta \psi+b \Delta v)( \Delta
\psi+\tau \Delta v)\dx \\
- \int_{\mathbb{R}^{n}}\int_{0}^{\infty}g(s)\,\Delta \eta
(s)\,( \Delta \psi+\tau \Delta v)\ds\dx 
+R^{(2)}(\psi + \tau v). \end{multlined} 
\end{aligned}
\end{equation}%
We can conveniently rearrange the first term on the right as
\begin{equation}
\begin{aligned}
&-\int_{\mathbb{R}^{n}}(c^2_g \Delta \psi+b \Delta v)( \Delta
\psi+\tau \Delta v)\dx \\
=&\,-c^2_g \|\Delta ( \psi+\tau v)\|^2_{L^2}+(b -\tau c^2_g
)\int_{\mathbb{R}^{n}}\Delta v \Delta ( \psi+\tau  v)\dx .
\end{aligned}
\end{equation}
The second term on the right in \eqref{dF_1} can be written as
\begin{equation} 
\begin{aligned}
- \int_{\mathbb{R}^{n}}( \psi+\tau v)_{t}\Delta ( v+\tau
w)\dx=\,- \int_{\mathbb{R}^n}(v+\tau w)\Delta (v+\tau w)\dx 
=\,  \|\nabla (v+\tau w)\|^2_{L^2} .
\end{aligned}
\end{equation}%
By adding together \eqref{Lemma_identity1} and the above identity, and then integrating by parts in space, we obtain 
\begin{equation}\label{F_1_terms}
\begin{aligned}
& \begin{multlined}[t] \frac{\textup{d}}{\dt}F_{1}(t)+c^2_g \int_{\mathbb{R}^{n}}|\Delta ( \psi+\tau
v)|^{2}\dx \end{multlined}\\
=&\,\begin{multlined}[t] \int_{\mathbb{R}^{n}}|\nabla (v+\tau w)|^{2}\dx-(b-\tau c^2_g)\int_{\mathbb{R}^{n}}\Delta v \, (\Delta
\psi+\tau \Delta v)\dx\\+  R^{(2)}(\psi + \tau v)
-\int_{\mathbb{R}^{n}}\int_{0}^{\infty}g(s)\, \Delta \eta
(s)\,\Delta( \psi+\tau v)\ds\dx .\end{multlined}
\end{aligned}
\end{equation}%
Applying Young's inequality results in \eqref{F_1_Estimate} for any $\epsilon _{0},\epsilon _{1}>0$. 
\end{proof}
  \noindent We next prove an important energy property of the functional $F_2$.
     \begin{lemma}\label{Lemma_F_2}
     Let $(\psi, v, w, \eta)$ be a smooth solution of the system \eqref{Main_System} with initial data \eqref{Main_System_IC}. For any $\epsilon_{2},\epsilon_{3}>0,$  we have 
     	\begin{equation}\label{F_2_Estimate}
     	\begin{aligned}
     	&\frac{\textup{d}}{\dt}F_{2}(t)+(1-\epsilon_{3})\Vert\nabla(v+\tau w)\Vert^{2}_{L^2} \\
	\leq&\, \begin{multlined}[t]\epsilon_{2}\Vert\Delta(\psi +\tau v)\Vert^{2}_{L^2}  +C(\epsilon_{3},\epsilon_{2})(\Vert \Delta v\Vert^{2}_{L^2}+\Vert \nabla v\Vert^{2}_{L^2}) 
     	+\frac{1}{2}\Vert \nabla\eta\Vert^{2}_{L^2, g}+|R^{(2)}(\tau v)|,\end{multlined}
     	\end{aligned}
     	\end{equation}
     	where the functional $R^{(2)}$ is defined in \eqref{Def_R}.
\end{lemma}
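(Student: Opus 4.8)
The plan is to follow the template of the proof of Lemma~\ref{Lemma_F_1}: differentiate $F_2$ in time, substitute the evolution equations, integrate by parts in space to expose the Laplacians, and then absorb everything with Young's inequality. Note that $F_2$ involves only $\nabla v$ and $\nabla(v+\tau w)$, so no extra regularity is needed — in particular we do not apply the Laplacian to the system.

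First I would differentiate \eqref{F_Functionals}. Using $v_t=w$ together with the identity \eqref{eq_1} for $(v+\tau w)_t$, and integrating by parts in space in the term containing $\nabla(v+\tau w)_t$, one obtains
\[
\frac{\textup{d}}{\dt}F_2(t)= -\tau\int_{\mathbb{R}^n}\nabla w\cdot\nabla(v+\tau w)\dx+\tau\int_{\mathbb{R}^n}\Delta v\,\Bigl(b\Delta v+c^2_g\Delta\psi+\int_0^\infty g(s)\Delta\eta(s)\ds+2kvw\Bigr)\dx.
\]
The first term is the source of the good dissipative quantity: writing $\tau\nabla w=\nabla(v+\tau w)-\nabla v$ turns it into $-\Vert\nabla(v+\tau w)\Vert^2_{L^2}+\int\nabla v\cdot\nabla(v+\tau w)\dx$, and a Young estimate on the cross term moves $\epsilon_3\Vert\nabla(v+\tau w)\Vert^2_{L^2}$ to the right-hand side, leaving the coefficient $(1-\epsilon_3)$ in \eqref{F_2_Estimate} at the price of a term $C(\epsilon_3)\Vert\nabla v\Vert^2_{L^2}$.

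For the remaining four summands I would treat each separately: the $b\Delta v$ part contributes $\tau b\Vert\Delta v\Vert^2_{L^2}$; in the $c^2_g\Delta\psi$ part I substitute $\Delta\psi=\Delta(\psi+\tau v)-\tau\Delta v$, so that it becomes $\tau c^2_g(\Delta v,\Delta(\psi+\tau v))_{L^2}-\tau^2c^2_g\Vert\Delta v\Vert^2_{L^2}$, and Young's inequality bounds the first piece by $\epsilon_2\Vert\Delta(\psi+\tau v)\Vert^2_{L^2}+C(\epsilon_2)\Vert\Delta v\Vert^2_{L^2}$; the memory part $\tau\int_0^\infty g(s)(\Delta v,\Delta\eta(s))_{L^2}\ds$ is bounded via Young's inequality by $\tfrac12\Vert\Delta\eta\Vert^2_{L^2,g}$ plus an absorbable multiple of $(c^2-c^2_g)\Vert\Delta v\Vert^2_{L^2}$ (this is the weighted $\eta$-term, in the same spirit as \eqref{F_1_Estimate}); and the nonlinear part equals $2k\tau(\Delta v,vw)_{L^2}=-2k\tau(\nabla v,\nabla(vw))_{L^2}=-R^{(2)}(\tau v)$ after one integration by parts, hence is controlled by $|R^{(2)}(\tau v)|$. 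Collecting the three separate $\Vert\Delta v\Vert^2_{L^2}$ contributions into a single $C(\epsilon_2,\epsilon_3)\Vert\Delta v\Vert^2_{L^2}$ and keeping the $\Vert\nabla v\Vert^2_{L^2}$ term with the same constant then yields \eqref{F_2_Estimate}.

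None of the individual steps is deep; the points that need care are the algebraic rewriting $\tau\nabla w=\nabla(v+\tau w)-\nabla v$ — without it the negative term $-\Vert\nabla(v+\tau w)\Vert^2_{L^2}$ simply does not appear — and the bookkeeping of the "wrong-sign" $\Vert\Delta v\Vert^2_{L^2}$ contributions from $b\Delta v$, from $\Delta\psi$, and from the memory integral, all of which must land on the right-hand side. The conceptual observation, which matters for how this estimate is later used rather than for its proof, is that $F_2$ is not coercive on its own — it produces $+\tau b\Vert\Delta v\Vert^2_{L^2}$ — so it will only be useful downstream in a suitable linear combination with $E_1$, $E_2$ and $F_1$, whose genuinely dissipative $\Vert\nabla v\Vert^2_{L^2}$, $\Vert\Delta v\Vert^2_{L^2}$, $\Vert\nabla\eta\Vert^2_{L^2,-g'}$ and $\Vert\Delta\eta\Vert^2_{L^2,-g'}$ terms dominate these contributions.
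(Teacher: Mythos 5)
Your proposal is correct and follows essentially the same route as the paper: differentiate $F_2$, use $v_t=w$ and equation \eqref{eq_1}, integrate by parts to put $(v+\tau w)_t$ against $\Delta v$, rewrite $\tau\nabla w=\nabla(v+\tau w)-\nabla v$ to extract $-\Vert\nabla(v+\tau w)\Vert^2_{L^2}$, and close with Young's inequality. The only discrepancy is that your memory bound produces $\tfrac12\Vert\Delta\eta\Vert^2_{L^2,g}$ rather than the $\tfrac12\Vert\nabla\eta\Vert^2_{L^2,g}$ written in \eqref{F_2_Estimate}; your version is the one consistent with the computation (the stated lemma appears to contain a typo there), and either form is absorbed identically in the later Lyapunov argument via assumption (G3).
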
         
\begin{proof}
	We can express the derivative of the functional $F_2$ as
	\begin{equation} \label{dF_2}
	\begin{aligned}
	\frac{\textup{d}}{\dt}F_{2}(t)=&\, \begin{multlined}[t] \tau \int_{\mathbb{R}^{n}} v_t \, \Delta (v+\tau w)\dx+ \tau \int_{\mathbb{R}^{n}} \Delta v \, (v+\tau w)_{t}\dx \end{multlined}\\
=&\, 
\tau \int_{\mathbb{R}^{n}}w\Delta (v+\tau w)\dx+ \tau \int_{\mathbb{R}^{n}} \Delta v \, (v+\tau w)_{t}\dx ,
\end{aligned}
\end{equation}%
where the second line follows from $v_t=w$.  To further transform the second term on the right, we multiply equation \eqref{eq_1} by $ \tau \Delta v$. This action leads to
\begin{equation}
\begin{aligned}
& \tau \int_{\mathbb{R}^{n}}(v+\tau w)_{t}\, \Delta v\dx \\
=&\, \begin{multlined}[t]\int_{\mathbb{R}^{n}}\left(\tau c^2_g \Delta( \psi+\tau v) +\tau(b-\tau c^2_g)
\Delta v+(
v+\tau w)\right. \\ \left.-(v+\tau w)\right)\Delta v\dx 
+ \tau\int_{\mathbb{R}^{n}}\int_{0}^{\infty}g(s)\Delta \eta(s)\Delta
v\ds\dx+R^{(2)}(\tau v).\end{multlined}
\end{aligned}
\end{equation}%
By plugging this identity into \eqref{dF_2}, we obtain 
\begin{equation}
\begin{aligned}
&\frac{\textup{d}}{\dt}F_{2}(t)+ \int_{\mathbb{R}^{n}}|\nabla ( v+\tau
w)|^{2}\dx\\
=&\, \begin{multlined}[t]\tau c^2_g \int_{\mathbb{R}^{n}}\Delta (\psi+\tau v)\Delta v\dx+\tau (b -\tau c^2_g )\int_{\R^{}}|\Delta
v|^{2}\dx \\
+\int_{%
	\mathbb{R}^{n}}\nabla (v+\tau w)\cdot \nabla v\dx + \tau \int_{\mathbb{R%
}^{n}}\int_{0}^{\infty}g(s)\Delta \eta(s)\Delta v\ds\dx+ R^{(2)}(\tau v). \end{multlined}
\end{aligned}
\end{equation}%
By additionally applying Young's inequality with $\epsilon _{2},\epsilon _{3}>0$, we arrive at the final estimate \eqref{F_2_Estimate}.
\end{proof}
\subsection{The Lyapunov functional} We are now ready to introduce the Lyapunov functional $\mathcal{L}$ as 
\begin{eqnarray}\label{Lyapunov}
\mathcal{L}(t)=L_1 (E_1(t)+E_2(t)+\varepsilon\tau \Vert w\Vert^2 _{L^{2}})+F_1(t)+L_2F_2(t),     
\end{eqnarray}
for $t \geq 0$. The positive constants $L_{1}$ and $L_{2}$ should be sufficiently large and the constant $\varepsilon>0$ small enough; we will make them more precise below. \\
\indent This Lyapunov functional can be made equivalent to $\mathscr{E}_1+\mathscr{E}_2+\Vert w\Vert_{L^2}^2$, where the energies $\mathscr{E}_1$ and $\mathscr{E}_2$ are defined in \eqref{E_1_Eqv} and \eqref{E_2_Eqv}, respectively. We prove this statement next. 
\begin{lemma}
\label{Lemma_Equivl} Let $b\geq \tau c^2>\tau c^2_g$. There exist positive constants $C_{1}$ and $C_{2}$, 
such that 
\begin{equation}  \label{Eq_L_E}
C_{1}(\mathscr{E}_1(t)+\mathscr{E}_2(t)+\Vert w\Vert_{L^2}^2)\leq \mathcal{L}(t)\leq C_{2}(\mathscr{E}_1(t)+\mathscr{E}_2(t)+\Vert
w\Vert_{L^2}^2), 
\end{equation}
for all $t\geq 0$, provided that the constant $L_1$ in the Lyapunov functional \eqref{Lyapunov} is chosen large enough.
\end{lemma}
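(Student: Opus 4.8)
The plan is to derive the two-sided bound \eqref{Eq_L_E} from the equivalences already at our disposal, namely Lemmas~\ref{Lemma_EquivE_0} and~\ref{Lemma_EquivE_1}, together with the elementary fact that the cross terms $F_1$ and $F_2$ are dominated by $\mathscr{E}_1+\mathscr{E}_2$.

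First I would record what is known. By Lemma~\ref{Lemma_EquivE_0} and Lemma~\ref{Lemma_EquivE_1} (this is the only place the hypothesis $b\ge\tau c^2>\tau c^2_g$ enters) there are constants $c_1,c_2>0$, independent of $t$, with $c_1(\mathscr{E}_1(t)+\mathscr{E}_2(t))\le E_1(t)+E_2(t)\le c_2(\mathscr{E}_1(t)+\mathscr{E}_2(t))$ for every $t\ge0$. Adding the nonnegative term $\varepsilon\tau\|w(t)\|_{L^2}^2$ appearing in the definition \eqref{Lyapunov} of $\mathcal{L}$ then gives
\[
\min\{c_1,\varepsilon\tau\}\,\mathcal{S}(t)\ \le\ E_1(t)+E_2(t)+\varepsilon\tau\|w(t)\|_{L^2}^2\ \le\ \max\{c_2,\varepsilon\tau\}\,\mathcal{S}(t),
\]
where I abbreviate $\mathcal{S}(t)=\mathscr{E}_1(t)+\mathscr{E}_2(t)+\|w(t)\|_{L^2}^2$. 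Keeping $\varepsilon\tau\|w\|_{L^2}^2$ inside $\mathcal{L}$ is exactly what recovers control of $\|w\|_{L^2}^2$, since $E_1,E_2$ only see the combinations $\|v+\tau w\|_{L^2}^2$ and $\|\nabla(v+\tau w)\|_{L^2}^2$.

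Next I would bound the cross terms directly from the definition \eqref{F_Functionals}. By Cauchy--Schwarz and Young's inequality, $|F_1(t)|\le\tfrac12\|\nabla(\psi+\tau v)\|_{L^2}^2+\tfrac12\|\nabla(v+\tau w)\|_{L^2}^2\le\tfrac12(\mathscr{E}_1(t)+\mathscr{E}_2(t))$ and, similarly, $|F_2(t)|\le\tfrac{\tau}{2}\|\nabla v\|_{L^2}^2+\tfrac{\tau}{2}\|\nabla(v+\tau w)\|_{L^2}^2\le\tfrac{\tau}{2}(\mathscr{E}_1(t)+\mathscr{E}_2(t))$, because $\|\nabla(\psi+\tau v)\|_{L^2}^2$ and $\|\nabla v\|_{L^2}^2$ are among the terms of $\mathscr{E}_1$, and $\|\nabla(v+\tau w)\|_{L^2}^2$ is one of the terms of $\mathscr{E}_2$. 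Hence $|F_1(t)+L_2F_2(t)|\le C(L_2)\,(\mathscr{E}_1(t)+\mathscr{E}_2(t))$ with $C(L_2)=\tfrac12+\tfrac{\tau L_2}{2}$.

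Finally I would combine the two estimates. For the upper bound in \eqref{Eq_L_E},
\[
\mathcal{L}(t)\ \le\ L_1\max\{c_2,\varepsilon\tau\}\,\mathcal{S}(t)+C(L_2)\,(\mathscr{E}_1+\mathscr{E}_2)\ \le\ C_2\,\mathcal{S}(t),
\]
while for the lower bound, using $F_1+L_2F_2\ge-C(L_2)(\mathscr{E}_1+\mathscr{E}_2)$,
\[
\mathcal{L}(t)\ \ge\ \big(L_1\min\{c_1,\varepsilon\tau\}-C(L_2)\big)(\mathscr{E}_1+\mathscr{E}_2)+L_1\min\{c_1,\varepsilon\tau\}\,\|w\|_{L^2}^2\ \ge\ C_1\,\mathcal{S}(t),
\]
provided $L_1$ is taken large enough that $C_1:=L_1\min\{c_1,\varepsilon\tau\}-C(L_2)>0$; all constants are independent of $t$, so the bounds hold for all $t\ge0$. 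There is no real obstacle here, but the one thing to be careful about is that $F_1+L_2F_2$ is not a lower-order perturbation of $E_1+E_2+\varepsilon\tau\|w\|_{L^2}^2$: it contributes at the full $\mathscr{E}_2$-level through $\|\nabla(v+\tau w)\|_{L^2}^2$, so there is no smallness to exploit and the equivalence is bought entirely by the largeness of $L_1$. Consequently the constants must be fixed in the order $\varepsilon,L_2$ first and then $L_1$ — the same hierarchy that will be needed when $\mathcal{L}$ is differentiated and used as a Lyapunov functional for the decay estimate.
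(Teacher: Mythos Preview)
Your proof is correct and follows essentially the same approach as the paper: bound the cross terms $F_1$ and $F_2$ by Cauchy--Schwarz in terms of $\mathscr{E}_1+\mathscr{E}_2$ (the paper does this via the equivalent $E_1+E_2$), and then absorb them by choosing $L_1$ large enough. Your explicit tracking of the constants and the remark on the order in which $\varepsilon,L_2,L_1$ must be fixed are more detailed than the paper's presentation, but the argument is the same.
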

\begin{proof}
In view of \eqref{Lyapunov}, we are missing the bounds on $F_1$ and $F_2$ to arrive at our claim. We can estimate these terms in the Lyapunov functional as follows:
\begin{equation} \label{est_1}
\begin{aligned}
|F_1(t)| \leq \,  \Vert \nabla( \psi+\tau v)(t)\Vert _{L^2}\Vert \nabla( v +\tau w)(t) \Vert_{L^2} 
\lesssim\, E_1(t)+E_2(t),
\end{aligned}
\end{equation}
and
\begin{equation} \label{est_2}
\begin{aligned}
|F_2(t)| \leq \,  \tau \Vert \nabla v(t) \Vert _{L^2}\Vert \nabla( v +\tau w)(t) \Vert_{L^2} 
\lesssim E_1(t)+E_2(t)
\end{aligned}
\end{equation}
for all $t \geq 0$. Hence, there exists $C^\star=C^\star(\tau, c^2_g, b, L_2)>0$ such that
\begin{equation}
\begin{aligned}
|\mathcal{L}(t)-L_1 (E_1(t)+E_2(t)+\varepsilon\tau \Vert w\Vert^2 _{L^{2}})|
\leq\, C^\star (E_1(t)+E_2(t)+\varepsilon \tau \Vert w\Vert^2 _{L^{2}}).
\end{aligned}
\end{equation}
Choosing $L_1$ large enough so that
\begin{equation} \label{L1_large}
L_1>C^\star=C^\star(\tau, c^2_g, b, L_2)
\end{equation}
 leads to the estimates given in \eqref{Eq_L_E}.
\end{proof}
\noindent We next derive an energy bound for the Lyapunov functional.
\begin{proposition} Let $b>\tau c^2>\tau c^2_g$. There exist a constant $L_1>0$ large enough and a constant $\varepsilon>0$ small enough such that the Lyapunov functional, defined in \eqref{Lyapunov},  satisfies
	\begin{equation}\label{Lyap_Main}
	\begin{aligned}
	&\begin{multlined}[t]\frac{\textup{d}}{\dt}\mathcal{L}(t)+ \Vert \nabla
	v(t)\Vert _{L^2}^{2}+\Vert \nabla \eta \Vert _{L^2, -g'}^{2}+\mathscr{E}_2[\Psi](t)
	+\Vert w(t)\Vert
	_{L^{2}}^{2} \end{multlined}\\[1mm]
	\lesssim&\,\begin{multlined}[t]   |R^{(1)}(v+\tau w)|+|R^{(2)}(v+\tau w)|
	+|R^{(1)}(w)|
	+|R^{(2)}(\psi+\tau v)| +|R^{(2)}(\tau v)|,\end{multlined}
	\end{aligned}
	\end{equation}
	for all $t\in [0,T]$, where the functionals $R^{(1)}$ and $R^{(2)}$ are defined in \eqref{Def_R}, and the energy $\mathscr{E}_2$ in \eqref{E_2_Eqv}.
\end{proposition}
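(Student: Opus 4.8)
The plan is to assemble the differential inequality by forming the linear combination of the estimates already proven and then choosing the various small and large parameters so that all ``bad'' terms on the right-hand sides are absorbed. Concretely, I would start from the Lyapunov functional $\mathcal{L}=L_1(E_1+E_2+\varepsilon\tau\|w\|_{L^2}^2)+F_1+L_2 F_2$ and differentiate in time, using Proposition~\ref{Prop:E1} for $\tfrac{\textup{d}}{\dt}E_1$, Proposition~\ref{Prop:E2} for $\tfrac{\textup{d}}{\dt}E_2$, Lemma~\ref{Lemma_F_1} for $\tfrac{\textup{d}}{\dt}F_1$, and Lemma~\ref{Lemma_F_2} for $\tfrac{\textup{d}}{\dt}F_2$. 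One ingredient still missing is an equation governing $\tfrac{\textup{d}}{\dt}\|w\|_{L^2}^2$: I would obtain it by testing the third equation in \eqref{Main_System} (i.e. $\tau w_t = -w + c^2_g\Delta\psi + b\Delta v + \int_0^\infty g(s)\Delta\eta\,\textup{d}s + 2kvw$) with $w$, which gives $\tfrac{\tau}{2}\tfrac{\textup{d}}{\dt}\|w\|_{L^2}^2 + \|w\|_{L^2}^2 = -(c^2_g\nabla\psi + b\nabla v + \int_0^\infty g(s)\nabla\eta\,\textup{d}s, \nabla w)_{L^2} + R^{(1)}(w)$; applying Young's inequality lets the right-hand side be controlled by a small multiple of $\|\nabla w\|_{L^2}^2$ (equivalently, absorbed into $\mathscr{E}_2$) plus $C(\|\nabla\psi\|_{L^2}^2+\|\nabla v\|_{L^2}^2 + \|\nabla\eta\|_{L^2,g}^2) + |R^{(1)}(w)|$.

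Next I would add these five contributions with weights. The diagnostic terms that must end up with strictly positive coefficients on the left are: $\|\nabla v\|_{L^2}^2$, $\|\nabla\eta\|_{L^2,-g'}^2$, $\|\Delta v\|_{L^2}^2$, $\|\Delta\eta\|_{L^2,-g'}^2$, $\|\Delta(\psi+\tau v)\|_{L^2}^2$, $\|\nabla(v+\tau w)\|_{L^2}^2$, and $\|w\|_{L^2}^2$ (together these dominate $\mathscr{E}_2$ plus the two first-order dissipation terms). From $L_1(E_1+E_2)$ we get $L_1(b-\tau c^2)(\|\nabla v\|_{L^2}^2 + \|\Delta v\|_{L^2}^2)$ and $\tfrac{L_1}{2}(\|\nabla\eta\|_{L^2,-g'}^2 + \|\Delta\eta\|_{L^2,-g'}^2)$; from $F_1$ we get the crucial $(c^2_g - \epsilon_0 - (c^2-c^2_g)\epsilon_1)\|\Delta(\psi+\tau v)\|_{L^2}^2$ but at the cost of producing $+\|\nabla(v+\tau w)\|_{L^2}^2$ on the right; from $L_2 F_2$ we get $L_2(1-\epsilon_3)\|\nabla(v+\tau w)\|_{L^2}^2$, which must be large enough to dominate the single unit coming from $F_1$. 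So the ordering of parameter choices is: first fix $\epsilon_0,\epsilon_1$ small so that $c^2_g - \epsilon_0 - (c^2-c^2_g)\epsilon_1 > 0$; then fix $\epsilon_3<1$ and choose $L_2$ large enough that $L_2(1-\epsilon_3) > 1 + \epsilon_2\text{-stuff}$, i.e. $L_2$ beats the $\|\nabla(v+\tau w)\|_{L^2}^2$ generated by $F_1$; then $F_2$ in turn throws back $\epsilon_2\|\Delta(\psi+\tau v)\|_{L^2}^2$ and $C(\epsilon_3,\epsilon_2)(\|\Delta v\|_{L^2}^2 + \|\nabla v\|_{L^2}^2)$, so choose $\epsilon_2$ small relative to the already-fixed coefficient of $\|\Delta(\psi+\tau v)\|_{L^2}^2$; finally choose $L_1$ large enough (and also $\geq C^\star$ from Lemma~\ref{Lemma_Equivl}) so that $L_1(b-\tau c^2)$ dominates $C(\epsilon_0)+C(\epsilon_3,\epsilon_2)$ in front of $\|\Delta v\|_{L^2}^2$ and $\|\nabla v\|_{L^2}^2$, and $\tfrac{L_1}{2}$ dominates the various multiples of $\|\nabla\eta\|_{L^2,g}^2$ and $\|\Delta\eta\|_{L^2,g}^2$ that appear (here one uses $\|\nabla\eta\|_{L^2,g}^2 \leq \tfrac1\zeta\|\nabla\eta\|_{L^2,-g'}^2$ from (G3), so these are indeed absorbable by $\tfrac{L_1}{2}\|\nabla\eta\|_{L^2,-g'}^2$ once $L_1$ is large). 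The weight $\varepsilon$ is chosen small so that the $\|w\|_{L^2}^2$ identity contributes its full $\|w\|_{L^2}^2$ on the left while its $\|\nabla w\|_{L^2}^2$-type right-hand side (bounded by $\|\Delta(v+\tau w)\|_{L^2}^2 + \|\Delta v\|_{L^2}^2$, hence by $\mathscr{E}_2$) is absorbed.

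After all absorptions, the left-hand side controls a positive multiple of $\|\nabla v\|_{L^2}^2 + \|\nabla\eta\|_{L^2,-g'}^2 + \mathscr{E}_2[\vecc\Psi] + \|w\|_{L^2}^2$ (recalling $\mathscr{E}_2 \simeq \|\Delta(\psi+\tau v)\|_{L^2}^2 + \|\nabla(v+\tau w)\|_{L^2}^2 + \|\Delta v\|_{L^2}^2 + \|\Delta\eta\|_{L^2,-g'}^2$), which after dividing out by a constant yields \eqref{Lyap_Main}, with the right-hand side exactly the sum of the nonlinear remainders $|R^{(1)}(v+\tau w)|$, $|R^{(2)}(v+\tau w)|$, $|R^{(1)}(w)|$, $|R^{(2)}(\psi+\tau v)|$, $|R^{(2)}(\tau v)|$ that were carried along untouched from the five constituent estimates. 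The main obstacle is bookkeeping rather than ideas: one must verify that the hierarchy of parameter choices is non-circular --- i.e. that $\epsilon_0,\epsilon_1,\epsilon_3$ can be fixed first, then $L_2$, then $\epsilon_2$, then $\varepsilon$, then $L_1$ --- and in particular that enlarging $L_1$ at the last step does not spoil the $F_1$-coefficient positivity (it does not, since the $\|\Delta(\psi+\tau v)\|_{L^2}^2$ coefficient comes from $F_1$ alone with unit weight and is independent of $L_1$) nor the $L_2$-versus-$1$ comparison (also independent of $L_1$). A secondary point worth care is that several terms such as $\|\Delta v\|_{L^2}^2$ receive contributions of opposite roles --- positive from $L_1 E_2$ but negative-direction (on the right) from $F_1$, $F_2$, and the $w$-identity --- so the final coefficient is $L_1(b-\tau c^2) - C(\epsilon_0) - C(\epsilon_3,\epsilon_2) - C\varepsilon^{-1}$-type and one must make sure $L_1$ is taken large only \emph{after} $\epsilon_2,\epsilon_3,\varepsilon$ are frozen, so that these constants do not themselves depend on $L_1$.
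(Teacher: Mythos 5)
Your overall architecture is exactly the paper's: sum the estimates of Propositions~\ref{Prop:E1} and~\ref{Prop:E2}, Lemmas~\ref{Lemma_F_1} and~\ref{Lemma_F_2}, plus a $\|w\|_{L^2}^2$-identity, and then fix $\epsilon_0,\epsilon_1,\epsilon_2,\epsilon_3,L_1,L_2,\varepsilon$ in a non-circular order. However, your treatment of the $\|w\|_{L^2}^2$-identity contains a step that would fail. After testing the third equation of \eqref{Main_System} with $w$, you integrate by parts and bound the resulting term by $C\big(\Vert\nabla\psi\Vert_{L^2}^2+\Vert\nabla v\Vert_{L^2}^2+\Vert\nabla\eta\Vert_{L^2,g}^2\big)$ plus a small multiple of $\Vert\nabla w\Vert_{L^2}^2$. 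The term $\Vert\nabla\psi\Vert_{L^2}^2$ (equivalently $\Vert\nabla(\psi+\tau v)\Vert_{L^2}^2$) is an \emph{energy-level} quantity, not a dissipation-level one: it appears in $\mathscr{E}_1$ but in none of the terms on the left of \eqref{Lyap_Main}, and on $\R^n$ there is no Poincar\'e inequality to trade it for $\Vert\Delta(\psi+\tau v)\Vert_{L^2}^2\in\mathscr{E}_2$. A leftover $+C\Vert\nabla(\psi+\tau v)\Vert_{L^2}^2$ on the right cannot be absorbed, and upon integrating in time it produces $t\,|\vecc{\Psi}|^2_{\mathcal{E}(t)}$, which destroys the uniform-in-time bound needed for global existence. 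The paper avoids this by \emph{not} integrating by parts: it estimates $|(c^2_g\Delta\psi+b\Delta v+\int_0^\infty g\,\Delta\eta\,\ds,\,w)_{L^2}|\le C(\Vert\Delta\psi\Vert_{L^2}+\Vert\Delta v\Vert_{L^2}+\Vert\Delta\eta\Vert_{L^2,g})\Vert w\Vert_{L^2}$ directly and then uses $\Vert\Delta\psi\Vert_{L^2}\le\Vert\Delta(\psi+\tau v)\Vert_{L^2}+\tau\Vert\Delta v\Vert_{L^2}$, so everything lands inside $\mathscr{E}_2$.

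This correction also invalidates your parenthetical claim that ``enlarging $L_1$ at the last step does not spoil the $F_1$-coefficient positivity.'' With the paper's (correct) version of the $w$-identity, its right-hand side contains $C\Vert\Delta(\psi+\tau v)\Vert_{L^2}^2$, and since this identity enters the Lyapunov functional with weight $L_1\varepsilon\tau$, the final coefficient of $\Vert\Delta(\psi+\tau v)\Vert_{L^2}^2$ is $c^2_g-\epsilon_0-(c^2-c^2_g)\epsilon_1-2CL_1\varepsilon-\epsilon_2L_2$, which \emph{does} depend on $L_1$. One must therefore decrease $\varepsilon$ again \emph{after} $L_1$ is fixed (as the paper does in its final step), keeping the product $\varepsilon L_1$ small; your proposed ordering ``$\varepsilon$, then $L_1$, with no readjustment'' is circular at precisely this point. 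The remaining bookkeeping in your proposal (the roles of $F_1$ and $F_2$, the $L_2(1-\epsilon_3)>1$ comparison, the absorption of $\Vert\nabla\eta\Vert^2_{L^2,g}$ via (G3), and the choice of $L_1$ large against $C(\epsilon_0)+L_2C(\epsilon_2,\epsilon_3)$) matches the paper and is correct.
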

\begin{proof}
To derive the desired estimate, we have to get a bound on $\tfrac{\textup{d}}{\dt}\Vert w\Vert
_{L^{2}}^{2}$ first. By multiplying the third equation in the system \eqref{Main_System} by $w$ and
	integrating over $\mathbb{R}^{n}$, we infer
	\begin{equation}\label{w_Energy}
	\begin{aligned}
	&\frac{1}{2}\frac{\textup{d}}{\dt}\int_{\mathbb{R}^{n}}\tau \left\vert w\right\vert
	^{2}\dx+\int_{\mathbb{R}^{n}} |w|^2 \dx \\
	\leq&\, C(\left\Vert \Delta \psi\Vert _{L^{2}}+\Vert \Delta v\right\Vert
	_{L^{2}}+\Vert\Delta\eta\Vert_{L^2, g})\Vert w\Vert_{L^2}+|R^{(1)}(w)|.
	\end{aligned}
	\end{equation}
By applying Young's inequality to the first term on the right, we obtain
	\begin{equation} \label{E_0_Energy}
	\begin{aligned}
	&\frac{1}{2}\frac{\textup{d}}{\dt}\|w(t)\|^2_{L^2}+\frac{1}{2}\|w\|^2_{L^2}\\
	\leq&\,
	C(\Vert
	\Delta (\psi+\tau v)\Vert_{L^2}^2+\Vert \Delta v\Vert _{L^{2}}^2+\Vert\Delta\eta\Vert^{2}_{L^2, g})+|R^{(1)}(w)| .
	\end{aligned}
	\end{equation}
Collecting the derived bounds $ \eqref{Energy_Indentity}+\eqref{dE_1_Dt}+2\varepsilon \eqref{E_0_Energy}$, we get 
	\begin{equation}  \label{E_Wieghted_2}
	\begin{aligned}
	&\begin{multlined}[t]\frac{\textup{d}}{\dt}\left(E_1(t)+E_2(t)+\varepsilon\tau \Vert w\Vert ^2_{L^{2}}\right)+(b
	-\tau c^2 ) (\left\Vert \nabla v\right\Vert
	_{L^{2}}^{2}+\Vert \Delta v\Vert _{L^{2}}^{2})\\
	+\varepsilon
	 \Vert w\Vert _{L^{2}}^{2} +\frac{1}{2}\Vert \nabla \eta\Vert _{L^2,-g'
	}^{2}+\frac12 \Vert \Delta \eta\Vert _{L^2,-g'}^{2}  \end{multlined} \\
	\leq& \,\begin{multlined}[t] 2C\varepsilon(\Vert \Delta (\psi+\tau v)\Vert_{L^{2}}^{2}+\Vert \Delta
	v\Vert _{L^{2}}^{2})+\Vert\Delta\eta\Vert^{2}_{L^2, g}  \\[1mm]
	+|R^{(1)}(v+\tau w)|+|R^{(2)}(v+\tau w)|+2\varepsilon|R^{(1)}(w)|.  \end{multlined}
	\end{aligned}
	\end{equation}
Note that the first term on the left in the brackets is equal to $L_1^{-1}(\mathcal{L}(t)-F_1(t)-L_2F_2(t))$. Taking into account Lemmas \ref{Lemma_F_1} and \ref{Lemma_F_2}
as well as assumption (G3) on the memory kernel, we obtain 
\begin{equation}\label{L_dt_Functional}
\begin{aligned}
&\begin{multlined}[t]\frac{\textup{d}}{\dt}\mathcal{L}(t)+L_1\varepsilon \Vert w\Vert _{L^{2}}^{2}+\left[
L_1/2-\Lambda_0/\zeta-2CL_1\varepsilon\right]\left( \Vert \nabla \eta\Vert _{L^2, -g'}^{2}+\Vert \Delta
\eta\Vert _{L^2, -g'}^{2}\right) \\
+\left[ L_1\left( b -\tau
c^2 \right) -2L_1C\varepsilon-C(\epsilon _{0})-C(\epsilon
_{3},\epsilon _{2})L_2\right] (\left\Vert \nabla v\right\Vert
_{L^{2}}^{2}+\left\Vert \Delta v\right\Vert _{L^{2}}^{2}) \\[2mm]
+\left[ c^2_g -\epsilon _{0}-(c^2 -c^2_g )\epsilon _{1}-2C\varepsilon
L_1-\epsilon _{2}L_2\right] \Vert \Delta ( \psi+\tau v)\Vert
_{L^{2}}^{2}\\
+\left[ L_2(1 -\epsilon _{3})-1 \right] \Vert \nabla (
v+\tau w)\Vert _{L^{2}}^{2}
\end{multlined} \\
\leq& \, \begin{multlined}[t] \Lambda_1 \left\{ |R^{(1)}(v+\tau w)|+|R^{(2)}(v+\tau w)|
+|R^{(1)}(w)|
\right.\\ \left.+|R^{(2)}(\psi+\tau v)| +|R^{(2)}(\tau v)|\right\},\end{multlined}
\end{aligned}
\end{equation}
where $\Lambda_0$ and $\Lambda_1$ are generic positive constants that depend on $L_1$, $L_2$, $\epsilon_0,\dots$, yet $\Lambda_0$  is independent of $\varepsilon$.   \\
\indent In the above estimate, the constants $\epsilon_{0}$,  $\epsilon_{1}$, $\epsilon_{2}$, $L_1$, $L_2$, and $\varepsilon$ can be chosen in such a way that the coefficients on the left-hand side of \eqref{L_dt_Functional} are positive. This outcome can be achieved as follows: we pick $\epsilon_{3}>0$ small enough such that $\epsilon_{3}<1$. Then we can select $\epsilon_1=\epsilon_{0}>0$ and $\varepsilon>0$ small enough such that 
$$ \epsilon_{0}<\dfrac{c^2_g}{1+(c^2-c^2_g)},\qquad \text{and}\qquad \varepsilon<\frac{b-\tau c^2}{2C}.$$
Afterwards, we take $L_2$ large enough such that $$L_2>\frac{1}{1-\epsilon_{3}}.$$ 
Once $L_2$ and $\epsilon_{0}$ are fixed, we select  $\epsilon_{2}>0$ small enough such that
$$\epsilon_{2}<\dfrac{c^2_g-\epsilon_{0}(1+(c^2-c^2_g))}{L_2}.$$
Keeping in mind the assumption $b>\tau c^2,$ we take $L_1$ large enough such that condition \eqref{L1_large} holds together with
\begin{eqnarray}\label{N_0_1}
L_1\geq \max\left\lbrace \dfrac{C(\epsilon_{0})+L_2C(\epsilon_{2},\epsilon_{3})}{b-\tau c^2},\dfrac{2\Lambda_0}{\zeta}\right\rbrace.
\end{eqnarray} 
Finally, we decrease $\varepsilon>0$ additionally so that  
\begin{equation}
\varepsilon<\min \left( \frac{L_1\left( b -\tau c^2
	\right) -C(\epsilon _{0})-C(\epsilon _{3},\epsilon _{2})L_1}{2CL_1},%
\frac{L_1/2- \Lambda _{0}/\zeta}{2CL_1}\right) .
\end{equation} 
Consequently, we obtain the desired estimate  \eqref{Lyap_Main}.     
\end{proof}
Now, by integrating estimate \eqref{Lyap_Main} over the time interval $(0, \sigma)$ for $\sigma \in (0,t)$ and then taking the supremum over time, we obtain
\begin{equation} \label{energy_est_1}
\begin{aligned}
 |\vecc{\Psi}|^2_{\mathcal{E}(t)}+|\vecc{\Psi}|^2_{\mathcal{D}(t)}
\lesssim&\, \begin{multlined}[t] |\vecc{\Psi}|^2_{\mathcal{E}(0)}
+ \int_0^t \left\{ |R^{(1)}(v+\tau w)|+|R^{(2)}(v+\tau w)|
+|R^{(1)}(w)|\right.\\ \left.
+|R^{(2)}(\psi+\tau v)| +|R^{(2)}(\tau v)| \right\} \, \textup{d} \sigma, \end{multlined}
\end{aligned}
\end{equation}
where we have additionally exploited the equivalence of the Lyapunov functional and $\mathscr{E}_1+\mathscr{E}_2+\|w\|^2_{L^2}$ given in \eqref{Eq_L_E}.
\subsection{Estimates of the right-hand side terms}
To finalize the energy bound, we have to estimate the remaining $R^{(1)}$ and $R^{(2)}$ terms. We wish to bound each of these terms by $|\vecc{\Psi}|_{\mathcal{E}(t)}  |\vecc{\Psi}|^2_{\mathcal{D}(t)}$ multiplied by some positive constant $C$ that is independent of $t$. The estimates are split into two lemmas.
\begin{lemma}\label{Lemma:R_a}
Let $\vecc{\Psi}=(\psi, v, w, \eta)^T$ be a smooth solution of the system \eqref{Main_System} with initial data \eqref{Main_System_IC}. For all $t \in [0,T]$, it holds
\begin{equation}  \label{I_1_Estimate_N_2}
\begin{aligned}
&\int_0^t |R^{(1)}(v+\tau w) (\sigma)|\,\textup{d} \sigma+\int_0^t |R^{(1)}(w)(\sigma)|\, \textup{d}\sigma\lesssim |\vecc{\Psi}|_{\mathcal{E}(t)}  |\vecc{\Psi}|^2_{\mathcal{D}(t)},
\end{aligned}
\end{equation}
where the functional $R^{(1)}$ is defined in \eqref{Def_R} and the energy semi-norms $|\cdot|_{\mathcal{E}(t)}$ and $|\cdot|_{\mathcal{D}(t)}$ in \eqref{EnergyNorm} and \eqref{DissipativeEnergyNorm}, respectively.
\end{lemma}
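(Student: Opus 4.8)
The plan is to unfold the definitions in \eqref{Def_R}, so that
\[
R^{(1)}(w) = 2k\int_{\R^n} v\,w^{2} \dx, \qquad R^{(1)}(v+\tau w) = 2k\int_{\R^n} v\,w\,(v+\tau w) \dx,
\]
and to bound the spatial integral at each fixed time by a product of three norms in which exactly one factor is controlled in $L^\infty_t$ by $|\vecc{\Psi}|_{\mathcal{E}(t)}$ and the remaining two are controlled in $L^2_t$ by $|\vecc{\Psi}|_{\mathcal{D}(t)}$; a Cauchy--Schwarz inequality in time then closes the estimate. Throughout I will use that the energy semi-norm dominates the full Sobolev norm of $v$: from \eqref{EnergyNorm} together with \eqref{E_1_Eqv} and \eqref{E_2_Eqv} we have $\|v(\sigma)\|_{H^2(\R^n)} \lesssim \|v+\tau w\|_{L^2}+\|w\|_{L^2}+\|\nabla v\|_{L^2}+\|\Delta v\|_{L^2} \lesssim |\vecc{\Psi}|_{\mathcal{E}(\sigma)} \leq |\vecc{\Psi}|_{\mathcal{E}(t)}$ for all $\sigma \in [0,t]$, and hence, by the Sobolev embedding $H^2(\R^n) \hookrightarrow L^\infty(\R^n)$ (valid for $n \in \{2,3\}$), that $\sup_{0 \leq \sigma \leq t}\|v(\sigma)\|_{L^\infty} \lesssim |\vecc{\Psi}|_{\mathcal{E}(t)}$.

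For the term containing $R^{(1)}(w)$ I would simply write $|\int_{\R^n} v\,w^{2}\dx| \leq \|v\|_{L^\infty}\|w\|_{L^2}^{2}$, integrate in time, pull out the supremum of $\|v\|_{L^\infty}$, and note that $\int_0^t \|w(\sigma)\|_{L^2}^2 \, \textup{d}\sigma$ is one of the summands in $|\vecc{\Psi}|^2_{\mathcal{D}(t)}$ by \eqref{DissipativeEnergyNorm}; this already gives $\int_0^t |R^{(1)}(w)(\sigma)| \, \textup{d}\sigma \lesssim |\vecc{\Psi}|_{\mathcal{E}(t)}\,|\vecc{\Psi}|^2_{\mathcal{D}(t)}$.

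The term containing $R^{(1)}(v+\tau w)$ is the delicate one, since the dissipation semi-norm controls $\|\nabla(v+\tau w)\|_{L^2}$ but, because Poincar\'e's inequality is unavailable, not $\|v+\tau w\|_{L^2}$ itself. I would therefore keep $w$ in $L^2$, split the remainder by H\"older's inequality (in $\R^3$) as $|\int_{\R^n} v\,w\,(v+\tau w)\dx| \leq \|w\|_{L^2}\|v\|_{L^3}\|v+\tau w\|_{L^6}$, and then use the Sobolev embedding $\dot H^1(\R^3) \hookrightarrow L^6(\R^3)$, i.e. $\|v+\tau w\|_{L^6} \lesssim \|\nabla(v+\tau w)\|_{L^2}$, together with the interpolation inequality $\|v\|_{L^3} \lesssim \|v\|_{L^2}^{1/2}\|\nabla v\|_{L^2}^{1/2}$. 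Since $\sup_\sigma \|v(\sigma)\|_{L^3} \lesssim \sup_\sigma \big(\|v\|_{L^2}^{1/2}\|\nabla v\|_{L^2}^{1/2}\big) \lesssim |\vecc{\Psi}|_{\mathcal{E}(t)}$, pulling this supremum out and applying Cauchy--Schwarz in time leaves
\[
\int_0^t |R^{(1)}(v+\tau w)(\sigma)| \, \textup{d}\sigma \lesssim |\vecc{\Psi}|_{\mathcal{E}(t)}\Big(\int_0^t \|w\|_{L^2}^2 \, \textup{d}\sigma\Big)^{1/2}\Big(\int_0^t \|\nabla(v+\tau w)\|_{L^2}^2 \, \textup{d}\sigma\Big)^{1/2} \lesssim |\vecc{\Psi}|_{\mathcal{E}(t)}\,|\vecc{\Psi}|^2_{\mathcal{D}(t)},
\]
because both time integrals on the right (the second entering through $\mathscr{E}_2$) are dominated by $|\vecc{\Psi}|^2_{\mathcal{D}(t)}$. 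In $\R^2$ one proceeds identically, using the Ladyzhenskaya inequality \eqref{Ladyz_Ineq} and the two-dimensional Gagliardo--Nirenberg inequality in place of $\dot H^1 \hookrightarrow L^6$; only the interpolation exponents change, and they still distribute so that two factors carry the dissipation and one the energy. I expect the main obstacle to be exactly this exponent bookkeeping: one must place $v+\tau w$ in an $L^p$ space with $p>2$ that is controlled by its gradient --- hence by the dissipation --- rather than by its $L^2$ norm, and then verify that the powers of the $L^2_t$-integrable quantities sum to precisely $2$, so that no spurious power of $t$ appears and the estimate is genuinely uniform in time.
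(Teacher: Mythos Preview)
Your argument is correct, but it differs from the paper's in a way worth noting. The paper does not keep the factor $v+\tau w$ intact; instead it expands $vw(v+\tau w)=v^{2}w+\tau vw^{2}$ and bounds each piece via the $L^{2}\times L^{4}\times L^{4}$ H\"older splitting together with the Ladyzhenskaya inequality \eqref{Ladyz_Ineq}, obtaining for instance $\|w\|_{L^2}\|v\|_{L^4}^{2}\lesssim \|w\|_{L^2}\|v\|_{L^2}^{2(1-n/4)}\|\nabla v\|_{L^2}^{n/2}$, and then distributes the exponents so that one factor sits in $\sup_\sigma(\cdot)$ and the remaining two in $L^{2}_{t}$. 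For $R^{(1)}(w)$ the paper likewise uses an $L^{4}\times L^{4}\times L^{2}$ split rather than your $L^{\infty}\times L^{2}\times L^{2}$.

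What the two routes buy: by expanding the product, the paper never has to place $v+\tau w$ in any Lebesgue space, so the whole argument runs uniformly for $n\in\{2,3\}$ with the same $L^{4}$ interpolation. Your approach is cleaner in $\mathbb{R}^{3}$---the $H^{2}\hookrightarrow L^{\infty}$ bound on $v$ makes $R^{(1)}(w)$ a one-line estimate, and the $\dot H^{1}\hookrightarrow L^{6}$ embedding handles $v+\tau w$ without any splitting---but that critical embedding is dimension-specific, which is why your two-dimensional case requires a genuinely different exponent bookkeeping (e.g.\ $\|v+\tau w\|_{L^4}\lesssim \|v+\tau w\|_{L^2}^{1/2}\|\nabla(v+\tau w)\|_{L^2}^{1/2}$, with the $L^{2}$ factor absorbed into the energy). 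That adjustment works, as you indicate, so the proposal stands.
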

\begin{proof}
By employing H\"older's inquality, we can proceed as follows:
\begin{equation}  \label{I_1_Main}
\begin{aligned}
|R^{(1)}(v+\tau w)|=& \, \left \vert 2k \int_{\mathbb{R}^n} vw(v+\tau w)\dx\right\vert\notag\\
\leq& \, 2  |k| \Vert w\Vert_{L^2}\Vert v\Vert_{L^4}^2+2 \tau |k|\Vert v\Vert_{L^2}\Vert
w\Vert_{L^4}^2.
\end{aligned}
\end{equation}
We can then rely on the Ladyzhenskaya interpolation inequality \eqref{Ladyz_Ineq}. We thus have for the first term on the right
\begin{equation}
\begin{aligned}
2  |k| \Vert w\Vert_{L^2}\Vert v\Vert_{L^4}^2\lesssim &\, \Vert w\Vert_{L^2}\Vert
v\Vert^{2(1-n/4)}_{L^2}\Vert \nabla v\Vert^{n/2}_{L^2} \\
\lesssim&\,  \|w\|_{L^2}(\|v\|^2_{L^2}+\|\nabla v\|^2_{L^2}),
\end{aligned}
\end{equation}
where we have also employed Young's inequality in the second line. Similarly, the second term can be estimated as
\begin{equation}\label{J_2_N_2}
\begin{aligned}
2 \tau |k| \Vert v\Vert_{L^2}\Vert w\Vert_{L^4}^2\lesssim& \, \Vert v\Vert_{L^2}\Vert
w\Vert^{2(1-n/4)}_{L^2}\Vert \nabla w\Vert^{n/2}_{L^2}\\
\lesssim&\, \|v\|_{L^2}(\|w\|^2_{L^2}+\|\nabla v\|^2_{L^2}).
\end{aligned}
\end{equation}
Altogether, this strategy yields
\begin{equation}
\begin{aligned}
\int_0^t |R^{(1)}(v+\tau w)(\sigma)|\, \textup{d} \sigma \lesssim & \, \begin{multlined}[t] \sup_{0\leq \sigma\leq t} \Vert w(\sigma)\Vert_{L^2}
\int_0^t (\Vert w(\sigma)\Vert_{L^2}^2+\Vert \nabla v(\sigma)\Vert_{L^2}^2) \, \textup{d}\sigma \\
+  \sup_{0\leq \sigma \leq t} \Vert v(\sigma)\Vert_{L^2}\int_0^t(\Vert
w(\sigma)\Vert_{L^2}^2 \, \textup{d}\sigma+\Vert \nabla w(\sigma)\Vert_{L^2}^2) \, \textup{d}\sigma.\end{multlined}
\end{aligned}
\end{equation}
 By additionally using the fact that
\begin{equation}\label{v,Nabla_w}
\begin{aligned}
&\Vert v(t)\Vert_{L^2}\leq \tau \Vert  w(t)\Vert_{L^2}+  \Vert
\nabla (v+\tau w)(t)\Vert_{L^2}, \\
&\Vert \nabla w(t)\Vert_{L^2}\leq \frac{1}{\tau}\Vert \nabla v(t)\Vert_{L^2}+ \frac{1}{\tau}\Vert
\nabla (v+\tau w)(t)\Vert_{L^2},
\end{aligned}
\end{equation}
for all $t$, we find that the first term on the left in \eqref{I_1_Estimate_N_2} can be bounded by $|\vecc{\Psi}|_{\mathcal{E}(t)}  |\vecc{\Psi}|^2_{\mathcal{D}(t)}$ up to a constant. The second term can be estimated  directly by noting that 
 \begin{equation} 
\begin{aligned}
|R^{(1)}(w)|\leq&\, 2|k|\Vert v\Vert_{L^4}\Vert w\Vert_{L^4}\Vert w\Vert_{L^2}\\
\lesssim& \,  \Vert v\Vert_{L^2}^{1-n/4}\Vert \nabla v\Vert_{L^2}^{n/4}\Vert w\Vert_{L^2}^{1-n/4}\Vert \nabla w\Vert_{L^2}^{n/4}\Vert w\Vert_{L^2}\\
\lesssim& \, (\Vert v\Vert_{L^2}+\Vert \nabla v\Vert_{L^2})(\Vert w\Vert_{L^2}^2+\Vert \nabla w\Vert_{L^2}^2)
\end{aligned}
\end{equation}
and recalling the above bounds on $\|v(t)\|_{L^2}$ and $\|\nabla w(t)\|_{L^2}$.
\end{proof}

\begin{lemma}\label{Lemma:R_b}
Let $\vecc{\Psi}=(\psi, v, w, \eta)^T$ be a smooth solution of the system \eqref{Main_System} with initial data \eqref{Main_System_IC}. Then it holds
\begin{equation}   \label{R_1_Estimate_N_2}
\begin{aligned}
\begin{multlined}[t]\int_0^t |R^{(2)}(v+\tau w)(\sigma)|\, \textup{d}\sigma+\int_0^t |R^{(2)}(\psi+\tau v)(\sigma)|\, \textup{d}\sigma\\
+\int_0^t |R^{(2)}( \tau  v)(\sigma)|\, \textup{d}\sigma
\lesssim |\vecc{\Psi}|_{\mathcal{E}(t)}|\vecc{\Psi}|^2_{\mathcal{D}(t)},\end{multlined}
\end{aligned}
\end{equation} 
for all $t \geq 0$, where the functional $R^{(2)}$ is defined in \eqref{Def_R} and the energy semi-norms $|\cdot|_{\mathcal{E}(t)}$ and $|\cdot|_{\mathcal{D}(t)}$ in \eqref{EnergyNorm} and \eqref{DissipativeEnergyNorm}, respectively.
\end{lemma}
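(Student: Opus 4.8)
The plan is to follow the scheme of Lemma~\ref{Lemma:R_a}: for each of the three test functions I would bound $|R^{(2)}(\varphi)|$ pointwise in time by a trilinear expression, pull out one factor as a time-supremum controlled by $|\vecc{\Psi}|_{\mathcal{E}(t)}$, and absorb the surviving time-integral into $|\vecc{\Psi}|_{\mathcal{D}(t)}^2$. Starting from the definition \eqref{Def_R}, Cauchy--Schwarz gives $|R^{(2)}(\varphi)|\leq 2|k|\,\Vert\nabla(vw)\Vert_{L^2}\Vert\nabla\varphi\Vert_{L^2}$. To estimate $\Vert\nabla(vw)\Vert_{L^2}$ I would use \eqref{First_inequaliy_Guass}; since among the unknowns only $v$ enjoys $H^2$-regularity while $\nabla w$ merely lies in $L^2$, that inequality is forced to be applied with $v$ in $L^\infty$, so that $\Vert\nabla(vw)\Vert_{L^2}\lesssim \Vert v\Vert_{L^\infty}\Vert\nabla w\Vert_{L^2}+\Vert w\Vert_{L^4}\Vert\nabla v\Vert_{L^4}$. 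The $L^4$-factors would then be treated by the Ladyzhenskaya and Gagliardo--Nirenberg inequalities \eqref{Ladyz_Ineq}, \eqref{Interpolation_inequality} together with Young's inequality, giving $\Vert w\Vert_{L^4}\lesssim \Vert w\Vert_{L^2}+\Vert\nabla w\Vert_{L^2}$ and $\Vert\nabla v\Vert_{L^4}\lesssim \Vert\nabla v\Vert_{L^2}+\Vert\Delta v\Vert_{L^2}$, while $\Vert v\Vert_{L^\infty}\lesssim\Vert v\Vert_{H^2}\lesssim|\vecc{\Psi}|_{\mathcal{E}(t)}$ (using $v=(v+\tau w)-\tau w$ and \eqref{E_1_Eqv}--\eqref{E_2_Eqv}).

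The bookkeeping would then rest on two observations about the energies in \eqref{E_1_Eqv}--\eqref{E_2_Eqv}. First, each of $\Vert v\Vert_{H^2}$, $\Vert\nabla v\Vert_{L^4}$, $\Vert\nabla(v+\tau w)\Vert_{L^2}$, and $\Vert\nabla(\psi+\tau v)\Vert_{L^2}$ is $\lesssim|\vecc{\Psi}|_{\mathcal{E}(t)}$. Second, via \eqref{v,Nabla_w} and the fact that $\Vert\nabla(v+\tau w)\Vert_{L^2}^2$ and $\Vert\Delta v\Vert_{L^2}^2$ are dominated by $\mathscr{E}_2$, each of $\Vert w\Vert_{L^2}$, $\Vert\nabla w\Vert_{L^2}$, $\Vert\nabla v\Vert_{L^2}$, $\Vert\Delta v\Vert_{L^2}$, $\Vert w\Vert_{L^4}$, $\Vert\nabla v\Vert_{L^4}$ has its squared time-integral bounded by $|\vecc{\Psi}|_{\mathcal{D}(t)}^2$. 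For $\varphi=v+\tau w$ and $\varphi=\tau v$ the test-function gradient $\Vert\nabla\varphi\Vert_{L^2}$ is itself dissipation-controlled, so in each summand of the bound for $\Vert\nabla(vw)\Vert_{L^2}$ I would extract $\Vert v\Vert_{L^\infty}$ (respectively $\Vert\nabla v\Vert_{L^4}$) as the energy factor, after which Young's inequality turns the remaining product of two dissipation-controlled factors into $|\vecc{\Psi}|_{\mathcal{D}(t)}^2$; this disposes of these two terms.

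The hard part is $R^{(2)}(\psi+\tau v)$. Here $\Vert\nabla(\psi+\tau v)\Vert_{L^2}$ appears only in $\mathscr{E}_1$, hence is controlled by $|\vecc{\Psi}|_{\mathcal{E}(t)}$ but \emph{not} by $|\vecc{\Psi}|_{\mathcal{D}(t)}$ -- the familiar consequence of the missing Poincar\'e inequality on $\R^n$. Thus $\Vert\nabla(\psi+\tau v)\Vert_{L^2}$ is forced to be the energy factor, and I would be left to prove the stronger bound $\int_0^t\Vert\nabla(vw)(\sigma)\Vert_{L^2}\,\textup{d}\sigma\lesssim|\vecc{\Psi}|_{\mathcal{D}(t)}^2$, i.e. that \emph{both} factors in each summand of the estimate for $\Vert\nabla(vw)\Vert_{L^2}$ are dissipation-controlled. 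For $\Vert w\Vert_{L^4}\Vert\nabla v\Vert_{L^4}$ this is already covered by the bounds above. For $\Vert v\Vert_{L^\infty}\Vert\nabla w\Vert_{L^2}$ one must upgrade the crude bound $\Vert v\Vert_{L^\infty}\lesssim\Vert v\Vert_{H^2}$ to the Agmon--type inequality $\Vert v\Vert_{L^\infty(\R^3)}\lesssim\Vert\nabla v\Vert_{L^2}^{1/2}\Vert\Delta v\Vert_{L^2}^{1/2}$, so that $\int_0^t\Vert v\Vert_{L^\infty}^2\,\textup{d}\sigma\lesssim\int_0^t(\Vert\nabla v\Vert_{L^2}^2+\Vert\Delta v\Vert_{L^2}^2)\,\textup{d}\sigma\lesssim|\vecc{\Psi}|_{\mathcal{D}(t)}^2$; Young's inequality then closes this piece. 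Collecting the three estimates and invoking the equivalences from Lemmas~\ref{Lemma_EquivE_0} and \ref{Lemma_EquivE_1} would yield \eqref{R_1_Estimate_N_2}.
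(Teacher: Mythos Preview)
Your approach is essentially that of the paper: both start from $|R^{(2)}(\varphi)|\le 2|k|\,\Vert\nabla(vw)\Vert_{L^2}\Vert\nabla\varphi\Vert_{L^2}$, split $\nabla(vw)$ via the product estimate \eqref{First_inequaliy_Guass}, control the $L^4$ factors by \eqref{Ladyz_Ineq}--\eqref{Interpolation_inequality}, and separate one energy factor from two dissipation factors. The paper in fact only writes out $\varphi=v+\tau w$ and declares the other two cases ``analogous''; you are more careful in noticing that for $\varphi=\psi+\tau v$ the gradient $\Vert\nabla(\psi+\tau v)\Vert_{L^2}$ is only energy-controlled, forcing both factors of $\Vert\nabla(vw)\Vert_{L^2}$ into the dissipation norm, which you handle via the Agmon bound $\Vert v\Vert_{L^\infty(\R^3)}\lesssim\Vert\nabla v\Vert_{L^2}^{1/2}\Vert\Delta v\Vert_{L^2}^{1/2}$.

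One caveat: that Agmon inequality is specific to $n=3$, whereas the lemma is stated for $n\in\{2,3\}$. A dimension-independent fix for the $\psi+\tau v$ term is to integrate by parts, writing $R^{(2)}(\psi+\tau v)=-2k(vw,\Delta(\psi+\tau v))_{L^2}$; then $\Vert\Delta(\psi+\tau v)\Vert_{L^2}$ is part of $\mathscr{E}_2$ and hence dissipation-controlled, and $\Vert vw\Vert_{L^2}\le\Vert v\Vert_{L^4}\Vert w\Vert_{L^4}$ splits cleanly into one energy factor and one dissipation factor via \eqref{Ladyz_Ineq}.
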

\begin{proof}
We only estimate the first term on the left in \eqref{R_1_Estimate_N_2}, the second and third one can be bounded analogously.  By applying H\"older's inequality,  we obtain 
\begin{equation} \label{est_R_last}
\begin{aligned}
& |R^{(2)}( v +\tau w)|\\
\leq&\, 2|k|\Vert w\Vert_{L^4}\Vert \nabla v \Vert_{L^4}\Vert \nabla ( v +\tau w)\Vert _{L^2}+2|k| \Vert v \Vert_{L^\infty}\Vert \nabla w \Vert_{L^2}\Vert \nabla ( v +\tau w)\Vert_{L^2}
\end{aligned}
\end{equation}
for all times. For the first term on the right, we can then use the the Ladyzhenskaya interpolation inequality \eqref{Ladyz_Ineq} in two- and three-dimensions to obtain
\begin{equation}
\begin{aligned}
&2|k| \Vert w\Vert_{L^4}\Vert \nabla v \Vert_{L^4}\Vert \nabla ( v +\tau w)\Vert _{L^2}\\
 \lesssim& \,  \|w\|^{1-n/4}_{L^2}\|\nabla w\|^{n/4}_{L^2}\|\nabla v\|^{1-n/4}_{L^2}\|\nabla^2 v\Vert^{n/4}_{L^2}\Vert \nabla ( v +\tau w)\Vert _{L^2} .
 \end{aligned}
 \end{equation}
 From here, by employing Young's inequality and the bound \eqref{v,Nabla_w} for $\|\nabla w\|_{L^2}$, we have
\begin{equation}
\begin{aligned} 
&2|k| \Vert w\Vert_{L^4}\Vert \nabla v \Vert_{L^4}\Vert \nabla ( v +\tau w)\Vert _{L^2}\\ 
 \lesssim&\, (\|w\|_{L^2}+\|\nabla w\|_{L^2})(\Vert \nabla ( v +\tau w)\Vert^2 _{L^2}+\|\nabla v\|^2_{L^2}+\|\Delta v\|^2_{L^2})\\
 \lesssim&\, (\|w\|_{L^2}+\|\nabla v\|_{L^2}+\nabla ( v +\tau w)\Vert _{L^2})(\Vert \nabla ( v +\tau w)\Vert^2 _{L^2}+\|\nabla v\|^2_{L^2}+\|\Delta v\|^2_{L^2}).
\end{aligned}
\end{equation}
The second term on the right in \eqref{est_R_last} we can estimate as follows: 
\begin{equation}
\begin{aligned}
& 2|k|\Vert v \Vert_{L^\infty}\Vert \nabla w \Vert_{L^2}\Vert \nabla ( v +\tau w)\Vert_{L^2}\\
\lesssim& \,(\|v\|_{L^2}+\|\nabla v\|_{L^2}+\|\Delta v\|_{L^2})\Vert \nabla w \Vert_{L^2}\Vert \nabla ( v +\tau w)\Vert_{L^2} \\
\lesssim&\, (\|v+\tau w\|_{L^2}+\|w\|_{L^2}+\|\nabla v\|_{L^2}+\|\Delta v\|_{L^2})\Vert \nabla w \Vert_{L^2}\Vert \nabla ( v +\tau w)\Vert_{L^2} .
\end{aligned}
\end{equation}
Consequently, we can deduce that 
\begin{equation}\label{R_1_1_Est}
\begin{aligned}
\int_0^t|R^{(2)}( v +\tau w)(\sigma)| \, \textup{d} \sigma
\lesssim&\,\begin{multlined}[t]  \sup_{\sigma \in [0,t]}(\|v+\tau w\|_{H^1}+\|w\|_{L^2}+\|\nabla v\|_{L^2}+\|\Delta v\|_{L^2})\\
 \times \int_0^t(\Vert \nabla ( v +\tau w)\Vert^2 _{L^2}+\|\nabla v\|^2_{L^2}+\|\Delta v\|^2_{L^2}) \, \textup{d} \sigma, \end{multlined}
\end{aligned}
\end{equation}
from which the first estimate in \eqref{R_1_Estimate_N_2} follows.
\end{proof}
Altogether, our previous considerations allow us to conclude that if a smooth solutions of the system \eqref{Main_System} with initial data \eqref{Main_System_IC} exists on $[0,T]$, it must satisfy the estimate
\begin{eqnarray*}
	|\vecc{\Psi}|_{\mathcal{E}(t)}+|\vecc{\Psi}|_{\mathcal{D}(t)} \lesssim |\vecc{\Psi}|_{\mathcal{E}(0)}+ |\vecc{\Psi}|_{\mathcal{E}(t)}|\vecc{\Psi}|^2_{\mathcal{D}(t)}, \quad t \in [0,T]. 
\end{eqnarray*}
We next deal with the issue of existence of such a solution.
%%%%%%%%%%%%%%%%%%%%%%%%%%%%%%%%%%%%%%%%%%%%%%%%%%%%%%%%%%%%%%%%%%%%%%%%
 \section{Local solvability of the JMGT equation with memory} \label{Sec:LocalExistence}
 In this section, we rely on the Banach fixed-point theorem to show the local well-posedness of our problem in $\R^n$, where $n \in \{2,3\}$. 
 \subsection{Linear local existence theory}
We begin by extending a linear existence result from~\cite{Bounadja_Said_2019} to allow for the possibility of having a source term. We recall how the Hilbert space $\mathcal{H}^1$ is defined in \eqref{Hilbert_spaces} and additionally introduce the domain of the operator $\mathcal{A}$ as
\begin{equation} \label{D(A)}
\begin{aligned}
D(\mathcal{A})
=\,  \left\{(\psi, v, w, \eta)^T \in \mathcal{H}^{1} \left\vert\rule{0cm}{1cm}\right. \begin{matrix}
w \in H^{1}(\R^n), \\[2mm]
\dfrac{c^2_g}{\tau} \Delta \psi+\dfrac{b}{\tau} \Delta v+\dfrac{1}{\tau}\displaystyle \int_0^\infty g(s)\Delta \eta(s)  \in L^2(\R^n), \\[4mm]
\eta \in D(\mathbb{T}) \end{matrix} \right\},
\end{aligned}
\end{equation}
where the operator $\mathbb{T}$ is defined in \eqref{def_T_eta}. We can now state a well-posedness result for a linearization of our problem.
 \begin{proposition} \label{Prop:LinExistence}
Assume that $b> \tau c^2 > \tau c^2_g$ and let the final time $T>0$ be given. Furthermore, assume that $(\psi_0, \psi_1, \psi_2) \in H^2(\R^n) \times H^2(\R^n) \times H^1(\R^n)$ and that a source term is given by \[F=[0, 0, f, 0]^T \in C^1([0,T]; \mathcal{H}^1) \cap C([0,T]; D(\mathcal{A})),\] where $n \in \{2, 3\}$. Then the initial-value problem 
 	\begin{equation} \label{IP_F}
 	\begin{cases}
 	\partial_t \vecc{\Psi}-\mathcal{A} \vecc{\Psi} = F, \\
 	\vecc{\Psi} \vert_{t=0}=\vecc{\Psi}_0
 	\end{cases}
 	\end{equation}
 	has a unique solution $\vecc{\Psi} \in  C^1([0,T]; \mathcal{H}^1) \cap C([0,T]; D(\mathcal{A}))$. This solution  satisfies the following energy estimate:
 	\begin{equation} \label{est_linproblem}
 \begin{aligned}
 |\vecc{\Psi}|^2_{\mathcal{E}(t)}+|\vecc{\Psi}|^2_{\mathcal{D}(t)} \lesssim |\vecc{\Psi}_0|^2_{\mathcal{E}(0)}+\|f\|^2_{L^1(0,t; H^1(\R^n))}, \qquad t \in [0,T],
 \end{aligned}
 \end{equation}
with the energy semi-norms $|\cdot|_{\mathcal{E}(t)}$ and $|\cdot|_{\mathcal{D}(t)}$ defined in \eqref{EnergyNorm} and \eqref{DissipativeEnergyNorm}, respectively.
 \end{proposition}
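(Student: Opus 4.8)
The plan is to combine the linear semigroup theory for $\mathcal{A}$ with a rerun of the energy computations of Section~\ref{Sec:EnergyEstimates}, now with the nonlinear term $2kvw$ replaced by the source $f$. The semigroup part is essentially contained in~\cite{Bounadja_Said_2019}: under the standing assumptions on $g$ and the condition $b>\tau c^2>\tau c^2_g$, the operator $\mathcal{A}$ with domain $D(\mathcal{A})$ generates a strongly continuous semigroup $(e^{t\mathcal{A}})_{t\geq0}$ on $\mathcal{H}^1$, which can be seen via the Lumer--Phillips theorem, exploiting that $\mathcal{A}$ is dissipative for an inner product built from $E_1$ and equivalent to the $\mathcal{H}^1$-norm, together with a range condition for some $\lambda>0$. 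For the inhomogeneous problem~\eqref{IP_F} one then uses the variation-of-constants formula
\[
\vecc{\Psi}(t)=e^{t\mathcal{A}}\vecc{\Psi}_0+\int_0^t e^{(t-s)\mathcal{A}}F(s)\,\textup{d}s .
\]
Since $\vecc{\Psi}_0=(\psi_0,\psi_1,\psi_2,\psi_0)^T$ lies in $D(\mathcal{A})$ — which one checks directly from $(\psi_0,\psi_1,\psi_2)\in H^2\times H^2\times H^1$, the definition of the history variable $\eta$, and the identity $\int_0^\infty g(s)\,\textup{d}s=c^2-c^2_g$ — and since $F\in C^1([0,T];\mathcal{H}^1)\cap C([0,T];D(\mathcal{A}))$, the classical theory of abstract linear Cauchy problems guarantees that this formula defines the unique solution $\vecc{\Psi}\in C^1([0,T];\mathcal{H}^1)\cap C([0,T];D(\mathcal{A}))$.

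It remains to prove the energy estimate~\eqref{est_linproblem}. The key observation is that all the identities behind the Lyapunov estimate~\eqref{Lyap_Main} — Propositions~\ref{Prop:E1}--\ref{Prop:E2} and Lemmas~\ref{Lemma_F_1}--\ref{Lemma_F_2} — remain valid verbatim once $2kvw$ is replaced by a fixed multiple of $f$ in the third equation of~\eqref{Main_System}; the memory manipulations are untouched, and for a classical solution with the stated data regularity these steps are now rigorous rather than merely formal. In place of $R^{(1)},R^{(2)}$ from~\eqref{Def_R} one obtains the functionals $(f,\varphi)_{L^2}$ and $(\nabla f,\nabla\varphi)_{L^2}$ (up to constants), which are \emph{linear} in both arguments. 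For each of the five terms on the right of~\eqref{Lyap_Main} — involving test functions $v+\tau w$, $w$, $\psi+\tau v$, $\tau v$ — the corresponding contribution is bounded by $C\|f(t)\|_{H^1}$ times one of $\|v+\tau w\|_{L^2}$, $\|w\|_{L^2}$, $\|\nabla(v+\tau w)\|_{L^2}$, $\|\nabla(\psi+\tau v)\|_{L^2}$, $\|\nabla v\|_{L^2}$, each of which is majorized by $\big(\mathscr{E}_1[\vecc{\Psi}]+\mathscr{E}_2[\vecc{\Psi}]+\|w\|_{L^2}^2\big)^{1/2}$. Using the equivalence $\mathcal{L}\sim\mathscr{E}_1+\mathscr{E}_2+\|w\|_{L^2}^2$ of Lemma~\ref{Lemma_Equivl}, this produces the differential inequality
\[
\frac{\textup{d}}{\textup{d}t}\mathcal{L}(t)+\big(\|\nabla v\|_{L^2}^2+\|\nabla\eta\|_{L^2,-g'}^2+\mathscr{E}_2[\vecc{\Psi}](t)+\|w(t)\|_{L^2}^2\big)\lesssim\|f(t)\|_{H^1}\,\mathcal{L}(t)^{1/2},\qquad t\in[0,T].
\]

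The only genuinely new point, compared with Section~\ref{Sec:EnergyEstimates}, is to turn this into a bound involving the \emph{$L^1$-in-time} norm of $f$, as stated in~\eqref{est_linproblem}. Discarding the dissipative terms on the left and dividing by $2\mathcal{L}(t)^{1/2}$ — after the routine regularization $\mathcal{L}\mapsto\mathcal{L}+\delta$, $\delta\downarrow0$, to avoid vanishing — we get $\tfrac{\textup{d}}{\textup{d}t}\mathcal{L}(t)^{1/2}\lesssim\|f(t)\|_{H^1}$, hence upon integration
\[
\sup_{0\leq\sigma\leq t}\mathcal{L}(\sigma)^{1/2}\lesssim\mathcal{L}(0)^{1/2}+\|f\|_{L^1(0,t;H^1(\R^n))} .
\]
Integrating the differential inequality itself over $(0,t)$ and estimating $\int_0^t\|f\|_{H^1}\mathcal{L}^{1/2}\,\textup{d}\sigma\leq\|f\|_{L^1(0,t;H^1)}\sup_\sigma\mathcal{L}(\sigma)^{1/2}$ followed by Young's inequality, one also controls $\int_0^t\big(\|\nabla v\|_{L^2}^2+\|\nabla\eta\|_{L^2,-g'}^2+\mathscr{E}_2[\vecc{\Psi}]+\|w\|_{L^2}^2\big)\,\textup{d}\sigma$ by $\mathcal{L}(0)+\|f\|_{L^1(0,t;H^1)}^2$. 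Recalling the definitions~\eqref{EnergyNorm}--\eqref{DissipativeEnergyNorm} and the equivalence $\mathcal{L}\sim\mathscr{E}_1+\mathscr{E}_2+\|w\|_{L^2}^2$ yields precisely~\eqref{est_linproblem}.

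I expect the main difficulty to be organizational rather than conceptual. First, one must make sure the energy identities of Section~\ref{Sec:EnergyEstimates} are admissible at the regularity level of the constructed solution: the higher-order estimates involve $\Delta\psi$, $\Delta v$ and $\Delta\eta$ separately, so one needs $\psi,v$ to be $H^2$-regular and $w$ to be $H^1$-regular; this is where the $H^2\times H^2\times H^1$ hypothesis on the data enters, and it is essential to know that $\mathcal{A}$ generates a semigroup on the corresponding second-order space as well (equivalently, that $D(\mathcal{A})$ embeds into the $\mathcal{H}^2$-scale), so that $\mathscr{E}_2[\vecc{\Psi}]$ is finite and evolves as computed. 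Second, one must track carefully that every source contribution is paired with a factor lying in the energy norm, so that the clean $\mathcal{L}^{1/2}$ differential inequality — and with it the $L^1$-in-time dependence on $f$ — genuinely closes.
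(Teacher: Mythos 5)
Your proposal is correct and follows essentially the same route as the paper: semigroup generation on $\mathcal{H}^1$ from~\cite{Bounadja_Said_2019} plus standard inhomogeneous Cauchy-problem theory for existence, and a rerun of the Section~\ref{Sec:EnergyEstimates} energy/Lyapunov machinery with $R^{(1)}(\varphi)=\tau(f,\varphi)_{L^2}$, $R^{(2)}(\varphi)=\tau(\nabla f,\nabla\varphi)_{L^2}$ for the estimate. The only cosmetic difference is in the last step: you extract the $L^1$-in-time dependence via the $\mathcal{L}^{1/2}$ differential inequality, whereas the paper integrates first to get $|\vecc{\Psi}|^2_{\mathcal{E}(t)}+|\vecc{\Psi}|^2_{\mathcal{D}(t)}\lesssim|\vecc{\Psi}_0|^2_{\mathcal{E}(0)}+|\vecc{\Psi}|_{\mathcal{E}(t)}\|f\|_{L^1H^1}$ and then absorbs by Young's inequality — two equivalent versions of the same absorption argument.
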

\begin{proof}
The existence and regularity in the case $F = \boldsymbol{0}$ follow by~\cite[Corollary 2.6]{Bounadja_Said_2019}. The proof is based on the operator $\mathcal{A}$ being the infinitesimal generator of a $C_0$ semigroup of contraction on $\mathcal{H}^1$. The general case $F \neq \boldsymbol{0}$ follows by relying on standard semigroup results; see, for example,~\cite[Theorem 2.4.1 and Corollary 2.4.1]{zheng2004nonlinear}.\\
\indent We can derive the estimate by employing similar energy arguments to the ones of Section~\ref{Sec:EnergyEstimates}, where now the functionals $R^{(1)}$ and $R^{(2)}$ are given by
\begin{equation} 
\begin{aligned}
R^{(1)}(\varphi)= \tau (f, \varphi)_{L^2}, \quad R^{(2)}(\varphi)= \tau (\nabla f, \nabla \varphi)_{L^2}.
\end{aligned}
\end{equation}
This approach first leads to
	\begin{equation} \label{est_lin_f}
\begin{aligned}
|\vecc{\Psi}|^2_{\mathcal{E}(t)}+|\vecc{\Psi}|^2_{\mathcal{D}(t)} \lesssim |\vecc{\Psi}_0|^2_{\mathcal{E}(0)}+|\vecc{\Psi}|_{\mathcal{E}(t)}\|f\|_{L^1 H^1}, \ t \in [0,T].
\end{aligned}
\end{equation}
An application of Young's inequality then results in \eqref{est_linproblem}.
\end{proof}
 \subsection{Short-time existence for the nonlinear problem}
By relying on Proposition~\ref{Prop:LinExistence}, we can prove that a unique solution to our problem exists in a sufficiently brief interval of time. 
 \begin{theorem}\label{Local_Existence_1}
	Let $b> \tau c^2 > \tau c^2_g$ and $k \in \R$. Assume that $(\psi_0, \psi_1, \psi_2) \in H^2(\R^n) \times H^2(\R^n) \times H^1(\R^n)$, where $n \in \{2, 3\}$. 
	Then there exists a final time \[T=T( |\vecc{\Psi}_0|_{\mathcal{E}(0)}, \|\psi_0\|_{L^2})\]
such that the problem  given by \eqref{Main_System},  \eqref{Main_System_IC} has a unique solution \[\vecc{\Psi}=(\psi, v ,w, \eta)^T \in X=C^1([0,T]; \mathcal{H}^1) \cap C([0,T]; D(\mathcal{A})).\] Furthermore, the solution satisfies the following energy inequality:
	\begin{equation} \label{energy_est_nl}
	\begin{aligned}
	|\vecc{\Psi}|^2_{\mathcal{E}(t)}+|\vecc{\Psi}|^2_{\mathcal{D}(t)} 
\lesssim&\,  |\vecc{\Psi}|^2_{\mathcal{E}(0)}+ |\vecc{\Psi}|_{\mathcal{E}(t)}|\vecc{\Psi}|^2_{\mathcal{D}(t)}, \quad t \in [0,T].
	\end{aligned}
	\end{equation}
\end{theorem}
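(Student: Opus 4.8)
The plan is to set up a fixed-point iteration based on the mapping $\mathcal{T}$ from \eqref{Mapping_T}: given $\vecc{\Phi}$ in a suitable ball, let $\vecc{\Psi}=\mathcal{T}(\vecc{\Phi})$ be the solution of the linear problem $\partial_t \vecc{\Psi}-\mathcal{A}\vecc{\Psi}=\mathcal{F}(\vecc{\Phi})$, $\vecc{\Psi}|_{t=0}=\vecc{\Psi}_0$. The first step is to verify that, for $\vecc{\Phi}\in X$, the source term $\mathcal{F}(\vecc{\Phi})=\tfrac{2k}{\tau}[0,0,vw,0]^T$ satisfies the regularity hypotheses of Proposition~\ref{Prop:LinExistence}, namely $\mathcal{F}(\vecc{\Phi})\in C^1([0,T];\mathcal{H}^1)\cap C([0,T];D(\mathcal{A}))$; this is where the product $vw$ has to be controlled in $H^1(\R^n)$ using the algebra-type estimate \eqref{First_inequaliy_Guass} together with the Ladyzhenskaya and Gagliardo--Nirenberg inequalities \eqref{Ladyz_Ineq}--\eqref{Interpolation_inequality}, and where the time-derivative $\partial_t(vw)=wv_t+vw_t$ is handled by noting $v_t=w$ and $w_t$ is controlled through membership of $\vecc{\Phi}$ in $D(\mathcal{A})$. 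Once this is in place, Proposition~\ref{Prop:LinExistence} guarantees that $\mathcal{T}$ is well-defined and maps into $X$, and it supplies the linear energy estimate \eqref{est_linproblem} with $f=\tfrac{2k}{\tau}vw$.

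The second step is the self-mapping property. I would fix $R>0$ and consider the set
\[
B_{R,T}=\{\vecc{\Phi}\in X:\ \vecc{\Phi}(0)=\vecc{\Psi}_0,\ |\vecc{\Phi}|_{\mathcal{E}(T)}\le R\},
\]
and show that for $R$ suitably related to the data and $T$ small enough, $\mathcal{T}(B_{R,T})\subseteq B_{R,T}$. The key input is \eqref{est_linproblem}, which gives $|\vecc{\Psi}|^2_{\mathcal{E}(t)}\lesssim |\vecc{\Psi}_0|^2_{\mathcal{E}(0)}+\|vw\|^2_{L^1(0,t;H^1)}$. Using the nonlinear estimates of Lemmas~\ref{Lemma:R_a}--\ref{Lemma:R_b} in the form already derived there (or, more simply here, crude bounds that put all factors in the $\mathcal{E}$-norm at the cost of a factor $T$), one gets $\|vw\|_{L^1(0,t;H^1)}\lesssim T\,P(|\vecc{\Phi}|_{\mathcal{E}(t)})$ for a polynomial $P$; choosing $R=2C|\vecc{\Psi}_0|_{\mathcal{E}(0)}$ (plus the $\|\psi_0\|_{L^2}$ piece, since $\|\psi\|_{L^2}$ is not controlled by $\mathscr{E}_1+\mathscr{E}_2$ and must be propagated separately via $\psi_t=v$ and Gronwall) and then $T=T(R)$ small makes the ball invariant. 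One also needs to confirm $\psi\in C([0,T];L^2)$, which follows by integrating $\psi_t=v$ in time.

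The third step is the contraction estimate: for $\vecc{\Phi}_1,\vecc{\Phi}_2\in B_{R,T}$ with $\vecc{\Psi}_i=\mathcal{T}(\vecc{\Phi}_i)$, the difference $\vecc{\Psi}_1-\vecc{\Psi}_2$ solves the linear problem with zero initial data and source $\tfrac{2k}{\tau}[0,0,v_1w_1-v_2w_2,0]^T$. Writing $v_1w_1-v_2w_2=(v_1-v_2)w_1+v_2(w_1-w_2)$ and applying the linear estimate \eqref{est_linproblem} at the level of the lower norm (it suffices to contract in, say, $C([0,T];\mathcal{H}^1)$ rather than the full graph-norm space), the bilinear structure produces $|\vecc{\Psi}_1-\vecc{\Psi}_2|_{\mathcal{E}(T)}\le C(R)\,T\,|\vecc{\Phi}_1-\vecc{\Phi}_2|_{\mathcal{E}(T)}$, so that for $T$ small enough $\mathcal{T}$ is a contraction and Banach's fixed-point theorem yields the unique solution $\vecc{\Psi}\in X$. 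Finally, the energy inequality \eqref{energy_est_nl} is \emph{not} re-derived from scratch but is exactly the a priori estimate \eqref{energy_est_1} combined with Lemmas~\ref{Lemma:R_a}--\ref{Lemma:R_b}, now legitimate because the solution we have constructed is the smooth (strong) solution for which those formal computations were justified.

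The main obstacle I anticipate is the first step: checking that $\mathcal{F}(\vecc{\Phi})$ actually lands in $C^1([0,T];\mathcal{H}^1)\cap C([0,T];D(\mathcal{A}))$ so that the linear theory of Proposition~\ref{Prop:LinExistence} applies. Membership in $D(\mathcal{A})$ for the third component requires $vw\in H^1(\R^n)$ uniformly in time, which forces one to exploit that $v\in H^2$ and $w\in H^1$ (so $v\in L^\infty$ by Sobolev embedding in $n\le3$) and to use \eqref{First_inequaliy_Guass}; and $C^1$-in-time regularity of $vw$ requires controlling $w_t$, which is only available because $\vecc{\Phi}\in D(\mathcal{A})$ pointwise in $t$ — one must be careful that this holds on all of $[0,T]$ and depends continuously on $t$. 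A secondary subtlety, as the statement's dependence $T=T(|\vecc{\Psi}_0|_{\mathcal{E}(0)},\|\psi_0\|_{L^2})$ signals, is that the $L^2$-norm of $\psi$ sits outside the dissipative structure and must be carried along by hand through the transport identity $\psi_t=v$, which is why $\|\psi_0\|_{L^2}$ enters the lifespan.
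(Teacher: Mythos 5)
Your proposal follows essentially the same route as the paper: the same fixed-point map $\mathcal{T}$ built on Proposition~\ref{Prop:LinExistence}, the same verification that $\mathcal{F}(\vecc{\Phi})$ has the regularity required by the linear theory, the same separate propagation of $\|\psi\|_{L^2}$ through $\psi_t=v$ (which is exactly why $\|\psi_0\|_{L^2}$ enters the lifespan), and the same derivation of \eqref{energy_est_nl} from the a priori estimates of Section~\ref{Sec:EnergyEstimates} once the strong solution is in hand. The only notable difference is that the paper's ball $\mathcal{B}_L$ carries the full norm $\|\vecc{\Phi}\|_{C(\mathcal{H}^2)}+\sup_t\|v^\phi_t\|_{H^1}+\sup_t\|w^\phi_t\|_{L^2}$, so that it is a closed subset of $C([0,T];\mathcal{H}^2)$ and the contraction is performed in that same norm, whereas you bound only $|\vecc{\Phi}|_{\mathcal{E}(T)}$ and contract in a weaker norm, which would additionally require you to check that your ball is complete for that weaker metric.
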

\begin{proof}
We intend to prove the statement by using the Banach fixed-point theorem, following standard techniques in nonlinear acoustics; see, e.g.,~\cite{Racke_Said_2019, KaltenbacherNikolic,lasiecka2017global}. We first need to introduce a suitable mapping. \\
\indent As already stated, for a given $\vecc{\Phi}=(\psi^\phi, v^\phi, w^\phi, \eta^\phi)^T$ in an appropriately chosen ball $\mathcal{B}_L$, we consider the mapping $\mathcal{T}: \vecc{\Phi} \mapsto \vecc{\Psi}$,
where $\vecc{\Psi}$ solves 
the following inhomogeneous linear problem:
\begin{equation} \label{IP_Psi0}
\begin{cases}
\partial_t 	\vecc{\Psi}-\mathcal{A}\vecc{\Psi} =\mathcal{F}(\vecc{\Phi}),\\
\vecc{\Psi} \vert_{t=0}=\vecc{\Psi}_0,
\end{cases}
\end{equation}
with the functional $\mathcal{F}$ defined in \eqref{def_F}. To choose a suitable space for $\vecc{\Phi}$,  we expect based on the linear existence theory and our previous energy arguments that it is a subspace of $C([0,T];  \mathcal{H}^2)$, where $\mathcal{H}^2$ is defined in \eqref{Hilbert_spaces}. Additionally, in order to use  Proposition~\ref{Prop:LinExistence}, we need to have $\mathcal{F}(\vecc{\Phi}) \in C^1([0,T]; \mathcal{H}^1) \cap C([0,T]; D(\mathcal{A}))$. This condition is equivalent to \[v^\phi w^\phi \in C^1([0,T]; L^2(\R^n)) \cap C([0,T]; H^1(\R^n)).\]  Motivated by this, we introduce the ball
	\begin{equation} \label{B_L}
\begin{aligned}
\mathcal{B}_{L}=\{\vecc{\Phi}=& \,(\psi^\phi, v^\phi, w^\phi, \eta^\phi)^T \in  C([0,T];  \mathcal{H}^2)\,:\\ \  &  \,  \|\vecc{\Phi}\|_{C(\mathcal{H}^2)}
+\sup_{t \in [0,T]}\|v^\phi_t(t)\|_{H^1}+\sup_{t \in [0,T]}\|w^\phi_t(t)\|_{L^2} \leq L,   \ \vecc{\Phi}(0)=\vecc{\Psi}_0 \, \}.
\end{aligned}
\end{equation}
The associated norm is given by
\begin{equation}
\begin{aligned}
\|\vecc{\Phi}\|_{\mathcal{B}_L}=&\,\|\vecc{\Phi}\|_{C(\mathcal{H}^2)}+\sup_{t \in [0,T]}\|v_t^\phi(t)\|_{H^1}+\sup_{t \in [0,T]}\|w_t^\phi(t)\|_{L^2}\\
=&\, |\vecc{\Phi}|_{\mathcal{E}(T)}+\sup_{t \in [0,T]}\|\psi^\phi(t)\|_{L^2}+\sup_{t \in [0,T]}\|v_t^\phi(t)\|_{H^1}+\sup_{t \in [0,T]}\|w_t^\phi(t)\|_{L^2}.
\end{aligned}
\end{equation} 
The radius $L \geq |\vecc{\Psi}_0|_{ \mathcal{E}_0}+\|\psi_0\|^2_{L^2}$ of the ball will be conveniently chosen as large enough below. The set $\mathcal{B}_L$ is a closed subset of a complete metric space $C([0,T]; \mathcal{H}^2)$ with the metric induced by the norm $\|\cdot\|_{\mathcal{B}_L}$. This set is non-empty for sufficiently large $L$ thanks to the linear existence result from Proposition~\ref{Prop:LinExistence}.\\
\indent We split the rest of the proof into two parts: proving that $\mathcal{T}$ is a self-mapping and proving its contractivity.  \vspace*{2mm}
\paragraph{\bf The self-mapping property} We focus first on proving that $\mathcal{T}(\mathcal{B}_L) \subset \mathcal{B}_L$. Take $\Phi \in \mathcal{B}_L$. We know that $\mathcal{F}(\Phi)=[0, 0, f, 0]^T $, where $f= \frac{2k}{\tau} v^\phi w^\phi$. We can directly check that
\begin{equation}
\begin{aligned}
& \sup_{t \in [0,T]}(\|f(t)\|_{L^2}+\|f_t(t)\|_{L^2}) \\
\lesssim&\, \sup_{t \in [0,T]} \left(\|v^\phi(t)\|_{L^\infty}\|w^\phi(t)\|_{L^2}+\|v_t^\phi(t)\|_{L^4}\|w^\phi(t)\|_{L^4}+\|v^\phi(t)\|_{L^\infty}\| w_{t}^\phi(t)\|_{L^2} \right).
\end{aligned}
\end{equation}
Therefore, we immediately have
\[
\|F\|_{C^1(\mathcal{H}^1)}+\|F\|_{C(D(\mathcal{A}))} =\|f\|_{C^1(L^2)}+\|f\|_{C(H^1)} 
\lesssim \,\|\Phi\|^2_{\mathcal{B}_L}< + \infty.\]
By taking into account also the regularity assumptions on the initial data, we conclude that problem  \eqref{IP_Psi0} has a unique solution $\vecc{\Psi} \in X= C^1([0,T]; \mathcal{H}) \cap C([0,T]; D(\mathcal{A}))$ on account of Proposition~\ref{Prop:LinExistence}. Thus our mapping is well-defined and it maps $\mathcal{B}_L$ into the space $X$. \\
\indent To show $\|\vecc{\Psi}\|_{\mathcal{B}_L} \leq L$, we rely on the energy estimate \eqref{energy_est_nl} from Proposition~\ref{Prop:LinExistence}.  We have
\begin{equation} 
\begin{aligned}
|\vecc{\Psi}|^2_{\mathcal{E}(t)}+|\vecc{\Psi}|^2_{\mathcal{D}(t)} \lesssim \, |\vecc{\Psi}_0|^2_{\mathcal{E}(0)}+\|f\|^2_{L^1(H^1)} \lesssim& \, |\vecc{\Psi}_0|^2_{\mathcal{E}(0)}+T^2\|\Phi\|^4_{\mathcal{B}_L} \\
\lesssim& \, |\vecc{\Psi}_0|^2_{\mathcal{E}(0)}+T^2L^4. 
\end{aligned}
\end{equation}
By observing that $\psi(t)=\int_0^t \psi_t(s) \, \textup{d}s + \psi_0$, we find that 
\begin{equation}
\begin{aligned}
\sup_{t \in [0,T]} \|\psi(t)\|^2_{L^2} \lesssim T^2 |\vecc{\Psi}|^2_{\mathcal{E}(t)}+\|\psi_0\|^2_{L^2} \lesssim T^2 (|\vecc{\Psi}_0|^2_{\mathcal{E}(0)}+T^2L^4) +\|\psi_0\|^2_{L^2}.
\end{aligned}
\end{equation}
Moreover, we have the identity
\begin{equation}
\begin{aligned}
 \sup_{t \in [0,T]} \|w_{t}(t)\|^2_{L^2}
= \sup_{t \in [0,T]} \left\|-\tfrac{1}{\tau} w(t)+ \tfrac{c^2_g}{\tau} \Delta \psi(t)+\tfrac{b}{\tau} \Delta v(t)+\tfrac{1}{\tau}\displaystyle \int_0^\infty g(s)\Delta \eta(s) \ds \right\|^2_{L^2},
\end{aligned}
\end{equation}
which implies
\begin{equation} \label{est_wt}
\begin{aligned}
\sup_{t \in [0,T]} \|w_{t}(t)\|^2_{L^2}\lesssim\, |\vecc{\Psi}|^2_{\mathcal{E}(t)}\lesssim \, |\vecc{\Psi}_0|^2_{\mathcal{E}(0)}+T^2L^4.
\end{aligned}
\end{equation}
We also know that 
\begin{equation} \label{est_vt}
\sup_{t \in [0,T]} \|v_{t}(t)\|^2_{H^1}=\sup_{t \in [0,T]} \|w(t)\|^2_{H^1} \lesssim\, |\vecc{\Psi}|^2_{\mathcal{E}(t)}.
\end{equation}
 Altogether, there exists a positive constant $C_\star$ such that
\begin{equation} \label{smalness_nl_eq_Rn}
\begin{aligned}
\|\vecc{\Psi}\|^2_{\mathcal{B}_L} \leq& \, C_\star (T^2+1)( |\vecc{\Psi}_0|^2_{\mathcal{E}(0)}+\|\psi_0\|^2_{L^2}+T^2L^4).
\end{aligned}
\end{equation}
We can then choose the final time $T$ small enough and the radius $L$ large enough so that $\vecc{\Psi} \in \mathcal{B}_L$. Indeed,  
for $L^2_0=|\vecc{\Psi}_0|^2_{\mathcal{E}(0)}+\|\psi_0\|^2_{L^2}$, we have
 \begin{equation}
\|\vecc{\Psi}\|^2_{\mathcal{B}_L}\leq C_\star L_0^2+C_\star T^2 (L_0^2+L^4+T^2L^4).
\end{equation}
We first fix $L$ large enough such that 
\begin{eqnarray*}
C_\star L_0^2\leq \frac{L^2}{2}. 
\end{eqnarray*}
Once $L$ is fixed, we can choose $T>0$ small enough such that 
\begin{equation} \label{smallness_T}
T^2\leq \min \left(1,\frac{L^2}{2C_\star (L^2_0+2L^4)}\right).
\end{equation}
By doing so, we obtain 
\[\|\vecc{\Psi}\|^2_{\mathcal{B}_L} \leq L^2.\]
Therefore, we conclude that $\mathcal{T}(\vecc{\Phi}) \in \mathcal{B}_L$ for this choice of the radius $L$ and the final time $T$.

 \vspace{2mm}
\paragraph{\bf Contractivity} To show contractivity, we take $\vecc{\Phi}, \vecc{\Phi}^\star \in \mathcal{B}_L$ and
\[ \mathcal{T}(\vecc{\Phi})=\vecc{\Psi} \quad \text{and} \quad \mathcal{T}(\vecc{\Phi}^\star)=\vecc{\Psi}^\star.
\] We have
\[
\mathcal{T}(\vecc{\Phi})-\mathcal{T}(\vecc{\Phi}^\star)=\vecc{\Psi}-\vecc{\Psi}^\star.
\] 
We can see the difference $\vecc{\mathcal{W}}=\vecc{\Psi}-\vecc{\Psi}^\star$ as a solution of the inhomogeneous problem with zero initial data:
\begin{equation}
\begin{aligned}
\begin{cases}
\partial_t \vecc{\mathcal{W}}-\mathcal{A}\vecc{\mathcal{W}}=\mathcal{F}(\vecc{\Phi})-\mathcal{F}(\vecc{\Phi}^\star), \\
\vecc{\mathcal{W}}|_{t=0}=0.
\end{cases}
\end{aligned}
\end{equation}
Let $\vecc{\Phi}^\star=(\psi^{\phi}_\star, v^{\phi}_\star, w^{\phi}_\star, \eta^{\phi}_\star)^T$. The right-hand side of the above problem is given by
\begin{equation}
\begin{aligned}
\mathcal{F}(\vecc{\Phi})-\mathcal{F}(\vecc{\Phi}^\star)= \dfrac{2k}{\tau}(0,\, 0,\,  v^{\phi} w^\phi-v^\phi_{\star} w_{\star}^\phi,\, 0)^T.
\end{aligned}
\end{equation}
Then by relying on the energy bound \eqref{energy_est_nl} from Proposition~\ref{Prop:LinExistence}, we directly obtain the estimate
\begin{equation} \label{est_contractivity}
\begin{aligned}
|\vecc{\mathcal{W}}|^2_{\mathcal{E}(t)}+|\vecc{\mathcal{W}}|^2_{\mathcal{D}(t)} \lesssim&\, \|v^{\phi} w^\phi-v^\phi_{\star} w_{\star}^\phi\|^2_{L^1H^1} \\
\lesssim&\, \|(v^{\phi}-v^\phi_{\star}) w^\phi+v^\phi_{\star}(w^\phi-w_{\star}^\phi)\|^2_{L^1H^1}.
\end{aligned}
\end{equation}
From here we have
\begin{equation}
\begin{aligned}
|\vecc{\mathcal{W}}|^2_{\mathcal{E}(t)}+|\vecc{\mathcal{W}}|^2_{\mathcal{D}(t)} \lesssim\, T^2 \left(|\vecc{\Phi}|^2_{\mathcal{E}(t)}+|\vecc{\Phi}^\star|^2_{\mathcal{E}(t)} \right) |\vecc{\Phi} -\vecc{\Phi}^\star|^2_{\mathcal{E}(t)} 
\lesssim\, T^2 L^2 |\vecc{\Phi} -\vecc{\Phi}^\star|^2_{\mathcal{E}(t)} .
\end{aligned}
\end{equation}
Denote $\vecc{\mathcal{W}}=(\psi-\psi_\star,v-v_\star,w-w_\star, \eta-\eta_\star)^T$. Similarly to before, we can derive the bound
\begin{equation}
\begin{aligned}
\sup_{t \in [0,T]} \|\psi(t)-\psi_{\star}(t)\|^2_{L^2} \lesssim T^2 |\vecc{\Psi}-\vecc{\Psi}^\star|^2_{\mathcal{E}(t)}=T^2|\vecc{\mathcal{W}}|^2_{\mathcal{E}(t)},
\end{aligned}
\end{equation}
as well as the estimate
\begin{equation}
\begin{aligned}
\sup_{t \in [0,T]} \|v_t(t)-v_{t, \star}(t)\|^2_{H^1}+ \sup_{t \in [0,T]} \|w_t(t)-w_{t, \star}(t)\|^2_{L^2} \lesssim |\vecc{\mathcal{W}}|^2_{\mathcal{E}(t)}.
\end{aligned}
\end{equation}
Altogether, we have
\begin{equation}
\begin{aligned}
\|\vecc{\mathcal{W}}\|^2_{\mathcal{B}_L} \lesssim &\,T^2(1+T^2) \left(|\vecc{\Phi}|^2_{\mathcal{E}(t)}+|\vecc{\Phi}^\star|^2_{\mathcal{E}(t)} \right) \|\vecc{\Phi} -\vecc{\Phi}^\star\|^2_{\mathcal{B}_L}\\
\lesssim&\, T^2(1+T^2) L^2 \|\vecc{\Phi} -\vecc{\Phi}^\star\|^2_{\mathcal{B}_L}.
\end{aligned}
\end{equation}
Therefore, we can guarantee that the mapping $\mathcal{T}$ is strictly contractive by reducing the final time $T$. An application of Banach's fixed-point theorem then yields a unique solution $\vecc{\Psi}=\vecc{\Phi} \in \mathcal{B}_L$.\\
\paragraph{\bf Unique solvability} Since $\mathcal{T}$ maps $\mathcal{B}_L$ into $X$, we conclude that, in fact, \[\vecc{\Psi} \in X=C^1([0,T]; \mathcal{H}^1) \cap C([0,T]; D(\mathcal{A})).\] It remains to prove uniqueness. For any two solutions $\vecc{\Psi}$ and $\vecc{\Psi}^\star$, we can prove similarly to \eqref{est_contractivity} that
\begin{equation}
\begin{aligned}
|\vecc{\Psi}(t) - \vecc{\Psi}^\star(t)|^2_{E}
\lesssim&\, \|(v-v_\star) w+v_\star (w-w_\star)\|^2_{L^1H^1}\\
\lesssim&\,  T\int_0^t \left(|\vecc{\Psi}(s)|^2_{E}+|\vecc{\Psi}^\star(s)|^2_{E} \right)|\vecc{\Psi}(s) -\vecc{\Psi}^\star(s)|^2_{E}\, \textup{d}s, \\
%\lesssim&\, T\int_0^t L^2|\vecc{\Psi}(s) -\vecc{\Psi}^\star(s)|^2_{E}\, \textup{d}s 
\end{aligned}
\end{equation}
for $t \in [0, T]$, where the semi-norm is given by 
\[|\vecc{\Psi}(t)|^2_{E}= \mathscr{E}_1[\vecc{\Psi}](t)+\mathscr{E}_2[\vecc{\Psi}](t)+\Vert w(t )\Vert _{L^{2}}^{2},\]
for energies $\mathscr{E}_1$ and $\mathscr{E}_2$ defined in \eqref{E_1_Eqv} and \eqref{E_2_Eqv}, respectively.  Then by Gronwall's inequality, we have $|\vecc{\Psi}(t) - \vecc{\Psi}^\star(t)|_{E}=0$. By combining this with the fact that $\psi(t)-\psi_\star(t)= \int_0^t (\psi_t(s)-\psi_{\star,t}(s))\, \textup{d}s$, we obtain $(\vecc{\Psi} - \vecc{\Psi}^\star)(t)=0$ at all times $t \in [0, T]$. This concludes the proof.
\end{proof}
\begin{remark}[On global solvability]
Due to the hard restriction \eqref{smallness_T} on final time, we cannot expect to obtain global solvability of the JMGT equation based on this result. The main issue is that we had to use the estimate
\[\|\psi(t)\|_{L^2} \lesssim \sqrt{T} \|\psi_t\|_{L^2L^2}  + \|\psi_0\|_{L^2}\]
to control $\|\psi(t)\|_{L^2}$ because we do not have Poincar\'e's inequality at our disposal. A way of resolving this problem is to consider acoustic velocity potentials in homogeneous spaces $\dot{H}^1(\R^n)$. However, this means that we have to restrict our setting to $n>2$ to work in Hilbert spaces.
\end{remark}
%%%%%%%%%%%%%%%%%%%%%%%%%%%%%%%%%%%%%%%%%%%%%%%%%%%%%%%%%%%%%%%%%%%%%%%%
 \section{Global solvability in $\R^3$} \label{Sec:GlobalExistence}
To obtain global solvability for small data, we first have to modify the local existence result by working with acoustic potentials in $\dot{H}^1(\R^n)$.\\
\indent As already mentioned, the  homogeneous Sobolev space $\dot{H}^{1}(\R^n)$ is a Hilbert space if and only if $n>2$; see~\cite[Proposition 1.34]{bahouri2011fourier}. For this reason, we restrict ourselves to the physically most relevant setting $n=3$ to show global well-posedness and later suitable energy decay. \\
\indent We recall how the Hilbert space $\dot{\mathcal{H}}^1$ is defined in \eqref{dotHilbert_space_1} and also introduce the domain of the operator $\mathcal{A}$ as
	\begin{equation} \label{D(A)_R3}
	\begin{aligned}
	D(\mathcal{A})
	=\,  \left\{(\psi, v, w, \eta)^T \in \dot{\mathcal{H}}^{1} \left\vert\rule{0cm}{1cm}\right. \begin{matrix}
	w \in H^{1}(\R^n), \\[2mm]
	\dfrac{c^2_g}{\tau} \Delta \psi+\dfrac{b}{\tau} \Delta v+\dfrac{1}{\tau}\displaystyle \int_0^\infty g(s)\Delta \eta(s)  \in L^2(\R^n), \\[4mm]
	\eta \in D(\mathbb{T}) \end{matrix} \right\}.
	\end{aligned}
	\end{equation}
We first restate the linear existence result in $\R^3$ using the homogeneous Sobolev spaces.
 \begin{proposition} \label{Prop:LinExistence_R3}
	Let $b> \tau c^2> \tau c^2_g$ and let the final time $T>0$ be given. Assume that $(\psi_0, \psi_1, \psi_2) \in 
	\{\psi_0 \in \dot{H}^1(\R^3): \Delta \psi_0 \in L^2(\R^3)\} \times H^2(\R^3) \times H^1(\R^3)$ and that the source term is given by \[F=[0, 0, f, 0]^T \in C^1([0,T]; \mathcal{H}^1) \cap C([0,T]; \mathcal{D}(\mathcal{A})).\] Then the linear initial-value problem 
	\begin{equation} \label{IP_F}
	\begin{cases}
	\partial_t \vecc{\Psi}-\mathcal{A} \vecc{\Psi} = F, \\
	\vecc{\Psi} \vert_{t=0}=\vecc{\Psi}_0
	\end{cases}
	\end{equation}
 has a unique solution 
$\vecc{\Psi} \in C^1([0,T]; \dot{\mathcal{H}}^1) \cap C([0,T]; D(\mathcal{A}))$.
	Furthermore, the following estimate holds:
	\begin{equation} \label{bound_lin}
	\begin{aligned}
 \begin{aligned}
\|\vecc{\Psi}\|^2_{\mathcal{E}(t)}+|\vecc{\Psi}|^2_{\mathcal{D}(t)} \lesssim \|\vecc{\Psi}_0\|^2_{\mathcal{E}(0)}+\|f\|^2_{L^1 H^1}, \ t \in [0,T],
\end{aligned}
	\end{aligned}
	\end{equation}
where $\|\cdot\|_{\mathcal{E}(t)}=|\cdot|_{\mathcal{E}(t)}$ and $|\cdot|_{\mathcal{D}(t)}$ are defined in \eqref{EnergyNorm} and \eqref{DissipativeEnergyNorm}, respectively.
\end{proposition}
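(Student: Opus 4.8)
The plan is to follow the proof of Proposition~\ref{Prop:LinExistence} essentially line by line, the only genuinely new ingredient being that the phase space is now the homogeneous Hilbert space $\dot{\mathcal H}^1$ of \eqref{dotHilbert_space_1} rather than $\mathcal H^1$. First I would establish that the operator $\mathcal A$ with domain $D(\mathcal A)$ as in \eqref{D(A)_R3} generates a $C_0$-semigroup of contractions on $\dot{\mathcal H}^1$. Since here $n=3>2$, the space $\dot H^1(\R^3)$ is complete, so $\dot{\mathcal H}^1$ is a bona fide Hilbert space; moreover, the dissipativity estimate $\mathrm{Re}\,(\mathcal A\vecc\Psi,\vecc\Psi)_{\dot{\mathcal H}^1}\le 0$ and the resolvent identity used to solve $(\lambda I-\mathcal A)\vecc\Psi=\vecc G$ involve $\psi$ only through $\nabla\psi$ (and through $\Delta\psi$ in the third component), so the argument behind \cite[Corollary 2.6]{Bounadja_Said_2019} carries over without change. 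This yields existence, uniqueness and the regularity $\vecc\Psi\in C^1([0,T];\dot{\mathcal H}^1)\cap C([0,T];D(\mathcal A))$ in the case $F=\boldsymbol{0}$; the inhomogeneous case $F=[0,0,f,0]^T$ then follows from the variation-of-constants formula together with the standard theory for inhomogeneous abstract Cauchy problems (\cite[Theorem 2.4.1 and Corollary 2.4.1]{zheng2004nonlinear}), invoking the assumed regularity of $F$ and of the initial data.

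For the energy estimate \eqref{bound_lin}, I would re-run the Lyapunov-functional computation of Section~\ref{Sec:EnergyEstimates} for the linear problem \eqref{IP_F}. Every identity behind Propositions~\ref{Prop:E1} and \ref{Prop:E2}, Lemmas~\ref{Lemma_F_1} and \ref{Lemma_F_2}, and the bound on $\frac{\textup{d}}{\dt}\|w\|_{L^2}^2$ is obtained by testing the equations of \eqref{Main_System} with $-\Delta(\psi+\tau v)$, $\Delta v$, $v+\tau w$, $w$ and the like, and none of them requires $\|\psi\|_{L^2}$ — only $\|\nabla\psi\|_{L^2}$ and $\|\Delta\psi\|_{L^2}$ enter — so all of them remain valid verbatim in $\dot{\mathcal H}^1$. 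The sole change is that the right-hand-side functionals become the linear ones $R^{(1)}(\varphi)=\tau(f,\varphi)_{L^2}$ and $R^{(2)}(\varphi)=\tau(\nabla f,\nabla\varphi)_{L^2}$, so that the analogue of \eqref{energy_est_1} reads
\[
|\vecc\Psi|^2_{\mathcal E(t)}+|\vecc\Psi|^2_{\mathcal D(t)}\lesssim |\vecc\Psi_0|^2_{\mathcal E(0)}+\int_0^t\bigl(|R^{(1)}(v+\tau w)|+|R^{(2)}(v+\tau w)|+|R^{(1)}(w)|+|R^{(2)}(\psi+\tau v)|+|R^{(2)}(\tau v)|\bigr)\,\textup{d}\sigma .
\]

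It then remains to absorb these five source integrals. By Cauchy--Schwarz, each term is bounded by $C\|f(\sigma)\|_{H^1}\bigl(\|\varphi(\sigma)\|_{L^2}+\|\nabla\varphi(\sigma)\|_{L^2}\bigr)$, and for the test functions occurring here — $\varphi\in\{v+\tau w,\ w,\ \psi+\tau v,\ \tau v\}$ — the relevant norms are controlled by $|\vecc\Psi|_{\mathcal E(\sigma)}\le|\vecc\Psi|_{\mathcal E(t)}$ via the definition \eqref{EnergyNorm} together with the elementary bounds \eqref{v,Nabla_w}; note that $R^{(2)}$ only sees gradients, so for $\varphi=\psi+\tau v$ no $L^2$-norm of $\psi$ is needed. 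Pulling the supremum out of the time integral yields $|\vecc\Psi|^2_{\mathcal E(t)}+|\vecc\Psi|^2_{\mathcal D(t)}\lesssim|\vecc\Psi_0|^2_{\mathcal E(0)}+|\vecc\Psi|_{\mathcal E(t)}\|f\|_{L^1 H^1}$, and a final application of Young's inequality absorbs $|\vecc\Psi|_{\mathcal E(t)}$ into the left-hand side, producing \eqref{bound_lin}. The one step deserving genuine care, as opposed to bookkeeping, is the semigroup generation on the homogeneous space: one must verify that the resolvent problem for $\mathcal A$ remains solvable with $\psi$ sought in $\dot H^1(\R^3)$; I expect this to reduce, via Lax--Milgram, to coercivity of the governing bilinear form with respect to the $\dot{\mathcal H}^1$-norm — which holds precisely because no Poincaré inequality is ever invoked — combined with the completeness of $\dot H^1(\R^3)$ for $n=3$.
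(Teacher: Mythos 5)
Your proposal is correct and follows essentially the same route as the paper: the paper's own proof of this proposition simply states that it mirrors the proof of Proposition~\ref{Prop:LinExistence} (semigroup generation on $\dot{\mathcal{H}}^1$ with $D(\mathcal{A})$ as in \eqref{D(A)_R3}, standard inhomogeneous Cauchy theory, then the Lyapunov-functional energy argument with $R^{(1)}(\varphi)=\tau(f,\varphi)_{L^2}$, $R^{(2)}(\varphi)=\tau(\nabla f,\nabla\varphi)_{L^2}$ and a final application of Young's inequality). Your additional observations — that the energy identities never invoke $\|\psi\|_{L^2}$ and that the resolvent/dissipativity computations see $\psi$ only through its gradient, which is what makes the passage to the homogeneous space work — are exactly the points the paper leaves implicit.
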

\begin{proof}
The proof follows the same steps of the proof of Proposition~\ref{Prop:LinExistence}, based on the operator $\mathcal{A}$, with $D(\mathcal{A})$ defined in \eqref{D(A)_R3}, being the infinitesimal generator of a $C_0$ semigroup of contraction on $\dot{\mathcal{H}}^1$. 
\end{proof}
\noindent Next we can re-state the nonlinear local existence result in $\R^3$. 
\begin{theorem} \label{Thm:LocalExistence_3D}
Let $b> \tau c^2> \tau c^2_g$ and $k \in \R$. Assume that
\[(\psi_0, \psi_1, \psi_2) \in \{\psi_0: \psi_0 \in \dot{H}^1(\R^3), \ \Delta \psi_0 \in L^2(\R^3) \} \times H^2(\R^3) \times H^1(\R^3).\]
Then there exists a final time \[T=T( |\vecc{\Psi}_0|^2_{\mathcal{E}(0)})\]
such that problem   \eqref{Main_System},  \eqref{Main_System_IC} has a unique solution 
\[\vecc{\Psi}=(\psi, v ,w, \eta)^T \in C^1([0,T]; \dot{\mathcal{H}}^1)\cap C([0,T]; D(\mathcal{A})).\] The solution of the problem satisfies the energy estimate
\begin{equation} \label{energy_bound_Thm2}
\begin{aligned}
	\|\vecc{\Psi}\|^2_{\mathcal{E}(t)}+|\vecc{\Psi}|^2_{\mathcal{D}(t)} 
\lesssim&\,  \|\vecc{\Psi}\|^2_{\mathcal{E}(0)}+ \|\vecc{\Psi}\|_{\mathcal{E}(t)}|\vecc{\Psi}|^2_{\mathcal{D}(t)}, \quad t \in [0,T].
\end{aligned}
\end{equation}
\end{theorem}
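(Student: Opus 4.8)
The strategy is to rerun the Banach fixed-point argument of Theorem~\ref{Local_Existence_1}, but now with acoustic potentials in the homogeneous space $\dot{H}^1(\R^3)$. The decisive simplification is that the energy $\mathscr{E}_1+\mathscr{E}_2+\|w\|_{L^2}^2$ and the whole Lyapunov machinery of Section~\ref{Sec:EnergyEstimates} see $\psi$ only through $\nabla\psi$ and $\nabla^2\psi$, so the problematic estimate $\|\psi(t)\|_{L^2}\lesssim \sqrt{T}\,\|\psi_t\|_{L^2L^2}+\|\psi_0\|_{L^2}$ that forced the restrictive condition \eqref{smallness_T} no longer needs to be invoked. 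First I would define the map $\mathcal{T}\colon\vecc{\Phi}\mapsto\vecc{\Psi}$, where for $\vecc{\Phi}=(\psi^\phi,v^\phi,w^\phi,\eta^\phi)^T$ in a ball $\mathcal{B}_L$ the image $\vecc{\Psi}$ solves the linear problem $\partial_t\vecc{\Psi}-\mathcal{A}\vecc{\Psi}=\mathcal{F}(\vecc{\Phi})$, $\vecc{\Psi}|_{t=0}=\vecc{\Psi}_0$. The natural ball is
\[
\mathcal{B}_L=\{\vecc{\Phi}\in C([0,T];\dot{\mathcal{H}}^2)\colon |\vecc{\Phi}|_{\mathcal{E}(T)}+\textstyle\sup_{[0,T]}\|v^\phi_t\|_{H^1}+\sup_{[0,T]}\|w^\phi_t\|_{L^2}\leq L,\ \vecc{\Phi}(0)=\vecc{\Psi}_0\},
\]
a closed subset of the complete metric space $C([0,T];\dot{\mathcal{H}}^2)$, nonempty for $L$ large by Proposition~\ref{Prop:LinExistence_R3}. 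The two extra supremum terms play the same role as in the proof of Theorem~\ref{Local_Existence_1}: they guarantee, via the product rule and the Ladyzhenskaya and Gagliardo--Nirenberg inequalities \eqref{Ladyz_Ineq}--\eqref{First_inequaliy_Guass}, that $f=\tfrac{2k}{\tau}v^\phi w^\phi$ lies in $C^1([0,T];L^2)\cap C([0,T];H^1)$ with $\|f\|_{C^1(L^2)}+\|f\|_{C(H^1)}\lesssim\|\vecc{\Phi}\|_{\mathcal{B}_L}^2$, so that $\mathcal{F}(\vecc{\Phi})=[0,0,f,0]^T$ is an admissible source for Proposition~\ref{Prop:LinExistence_R3}.

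For the self-mapping property I would plug this into the linear bound \eqref{bound_lin}: since $\|f\|_{L^1(0,t;H^1)}\lesssim T\|\vecc{\Phi}\|_{\mathcal{B}_L}^2\leq TL^2$, one gets $\|\vecc{\Psi}\|_{\mathcal{E}(t)}^2+|\vecc{\Psi}|_{\mathcal{D}(t)}^2\lesssim\|\vecc{\Psi}_0\|_{\mathcal{E}(0)}^2+T^2L^4$; the third equation of \eqref{Main_System} gives $\|w_t(t)\|_{L^2}^2\lesssim\|\vecc{\Psi}\|_{\mathcal{E}(t)}^2$, while $\|v_t(t)\|_{H^1}=\|w(t)\|_{H^1}$ is already contained in $\|\vecc{\Psi}\|_{\mathcal{E}(t)}$. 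Writing $\psi(t)=\psi_0+\int_0^t v\,\textup{d}s$ and using $v\in C([0,T];H^2)$ shows $\nabla\psi(t),\nabla^2\psi(t)\in L^2(\R^3)$, hence $\vecc{\Psi}\in C([0,T];\dot{\mathcal{H}}^2)$ with no $\|\psi\|_{L^2}$ contribution to estimate. Thus $\|\vecc{\Psi}\|_{\mathcal{B}_L}^2\leq C_\star(\|\vecc{\Psi}_0\|_{\mathcal{E}(0)}^2+T^2L^4)$, and I first fix $L$ so that $C_\star\|\vecc{\Psi}_0\|_{\mathcal{E}(0)}^2\leq L^2/2$ and then $T$ small so that $C_\star T^2L^4\leq L^2/2$, giving $\mathcal{T}(\mathcal{B}_L)\subset\mathcal{B}_L$. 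For contractivity, the difference $\vecc{\mathcal{W}}=\mathcal{T}(\vecc{\Phi})-\mathcal{T}(\vecc{\Phi}^\star)$ solves the linear problem with zero data and source $\tfrac{2k}{\tau}(0,0,v^\phi w^\phi-v^\phi_\star w^\phi_\star,0)^T$; splitting $v^\phi w^\phi-v^\phi_\star w^\phi_\star=(v^\phi-v^\phi_\star)w^\phi+v^\phi_\star(w^\phi-w^\phi_\star)$ and applying \eqref{bound_lin} yields $\|\vecc{\mathcal{W}}\|_{\mathcal{B}_L}^2\lesssim T^2(1+T^2)L^2\|\vecc{\Phi}-\vecc{\Phi}^\star\|_{\mathcal{B}_L}^2$, a strict contraction after a final shrinking of $T$. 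Banach's theorem then gives the unique $\vecc{\Psi}\in\mathcal{B}_L$, which by construction lies in $C^1([0,T];\dot{\mathcal{H}}^1)\cap C([0,T];D(\mathcal{A}))$.

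Finally, the energy inequality \eqref{energy_bound_Thm2} is nothing but the a priori bound \eqref{Main_Estimate_Bootstrap}: the Lyapunov estimate \eqref{Lyap_Main} combined with the nonlinear bounds of Lemmas~\ref{Lemma:R_a} and \ref{Lemma:R_b}—each of which involves $\psi$ only via $\nabla\psi$ and $\nabla^2\psi$—is now rigorously valid since we have produced a solution of the required regularity, and integrating in time and taking the supremum gives \eqref{energy_bound_Thm2}. Uniqueness among all solutions in this class follows, exactly as in Theorem~\ref{Local_Existence_1}, from a Gronwall argument applied to $|\vecc{\Psi}-\vecc{\Psi}^\star|_{E}^2$. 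I expect the only genuine point to watch is the bookkeeping: one must check that every estimate carried over from Sections~\ref{Sec:EnergyEstimates}--\ref{Sec:LocalExistence} really avoids $\|\psi\|_{L^2}$, using only $\|\nabla\psi\|_{L^2}$ and $\|\nabla^2\psi\|_{L^2}$, so that the homogeneous-space framework is internally consistent—this is precisely what removes the hard time constraint \eqref{smallness_T} and makes the subsequent global continuation in Section~\ref{Sec:GlobalExistence} possible.
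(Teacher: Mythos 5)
Your proposal is correct and follows essentially the same route as the paper: the authors likewise rerun the fixed-point argument of Theorem~\ref{Local_Existence_1} on a ball in $C([0,T];\dot{\mathcal{H}}^2)$, observing that the energy framework only involves $\nabla\psi$ and $\nabla^2\psi$ so the $\|\psi\|_{L^2}$ estimate (and hence the hard constraint \eqref{smallness_T}) is no longer needed. The only cosmetic difference is that the paper's ball norm \eqref{norm_BL_R3} also includes the dissipation semi-norm $|\cdot|_{\mathcal{D}(T)}$, which does not affect the argument.
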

\begin{proof}
 The proof follows along the same lines as before, with the difference that now $|\cdot|_{\mathcal{E}(T)}$ defines a norm in $C([0,T]; \dot{\mathcal{H}}^2)$, where the Hilbert space $\dot{\mathcal{H}}^2$ is defined in \eqref{dotHilbert_space_2}.  We can, therefore, define the ball in $C([0,T]; \dot{\mathcal{H}}^2)$ as
\begin{equation} \label{B_L_R3}
\begin{aligned}
\mathcal{B}_{L}=\{\vecc{\Phi}=& \,(\psi^\phi, v^\phi, w^\phi, \eta^\phi)^T \in  C([0,T];  \dot{\mathcal{H}}^2):   \,  \|\vecc{\Phi}\| _{\mathcal{B}_L} \leq L,  \ \vecc{\Phi}(0)=\vecc{\Psi}_0 \, \}.
\end{aligned}
\end{equation}
but this time supplemented with the norm 
\begin{equation} \label{norm_BL_R3}
\|\vecc{\Phi}\| _{\mathcal{B}_L}= \|\vecc{\Phi}\|_{\mathcal{E}(T)}+|\vecc{\Psi}|_{\mathcal{D}(T)}+\sup_{t \in [0,T]}\|v^\phi_{t}(t)\|_{H^1}+\sup_{t \in [0,T]}\|w^\phi_{t}(t)\|_{L^2}.
\end{equation}
When proving that $\mathcal{T}(\mathcal{B}_L) \subset \mathcal{B}_L$, the bound \eqref{smalness_nl_eq_Rn} changes to
\begin{equation} 
\begin{aligned}
\|\vecc{\Psi}\|^2_{\mathcal{B}_L} \leq& \, C_\star ( \|\vecc{\Psi}_0\|^2_{\mathcal{E}(0)}+T^2L^4).
\end{aligned}
\end{equation}
We can thus guarantee that $\|\vecc{\Psi}\|_{\mathcal{B}_L} \leq L$ by choosing the radius large enough and then the final time small enough so that
\begin{equation} \label{smallness_T_new}
 C_\star \|\vecc{\Psi}_0\|^2_{\mathcal{E}(0)} \leq \frac12 L^2, \qquad T^2 \leq \frac{1}{2 C_\star L^2}.
\end{equation}
The rest of the proof follows along the same lines as before. We omit the details here.
\end{proof}  
\noindent From \eqref{smallness_T_new}, it is clear that we can increase $T$ by taking smaller data. We prove this claim next.
\begin{theorem} \label{Thm:GlobalExistence}
Let $b> \tau c^2> \tau c^2_g$ and $k \in \R$. Assume that
\begin{align} \label{initial_data_global}
	(\psi_0, \psi_1, \psi_2) \in \{\psi_0: \psi_0 \in \dot{H}^1(\R^3), \ \Delta \psi_0 \in L^2(\R^3) \} \times H^2(\R^3) \times H^1(\R^3).
\end{align}	
 Then there exists small $\delta>0$  such that if 
\begin{equation} \label{data_smallness3D}
\|\vecc{\Psi}_0\|^2_{\mathcal{E}(0)} \leq \delta,
\end{equation}
then problem \eqref{Main_System},  \eqref{Main_System_IC} has a global solution
\[\vecc{\Psi} \in \{\vecc{\Psi}=(\psi, v ,w, \eta)^T:\,  \vecc{\Psi} \in C([0, \infty); \dot{\mathcal{H}}^2),  \, (v, w) \in C^1((0, +\infty); H^1(\R^3) \times L^2(\R^3)) \}.\]
\end{theorem}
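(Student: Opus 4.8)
The plan is to combine the short-time existence result of Theorem~\ref{Thm:LocalExistence_3D} with the uniform-in-time a priori estimates of Section~\ref{Sec:EnergyEstimates} through a continuation (bootstrap) argument. Let $T_{\max} \in (0, \infty]$ be the maximal time of existence of the local solution $\vecc{\Psi} = (\psi, v, w, \eta)^T \in C^1([0,T]; \dot{\mathcal{H}}^1) \cap C([0,T]; D(\mathcal{A}))$ furnished by Theorem~\ref{Thm:LocalExistence_3D}, and for $t < T_{\max}$ set
\[
M(t) = \|\vecc{\Psi}\|^2_{\mathcal{E}(t)} + |\vecc{\Psi}|^2_{\mathcal{D}(t)},
\]
which is finite, non-negative, and continuous on $[0, T_{\max})$ by the regularity of the local solution. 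The goal is to show that if $\|\vecc{\Psi}_0\|^2_{\mathcal{E}(0)} \leq \delta$ with $\delta$ small enough, then $M$ is bounded on $[0,T_{\max})$ by a constant independent of $t$; by the blow-up alternative this forces $T_{\max} = \infty$. As a preliminary step I would justify rigorously that the formal energy computations of Section~\ref{Sec:EnergyEstimates}—in particular the Lyapunov estimate \eqref{Lyap_Main}, its consequence \eqref{energy_bound_Thm2}, and the nonlinear bounds of Lemmas~\ref{Lemma:R_a}--\ref{Lemma:R_b}—are valid in this solution class, via a density argument: the integrations by parts in the history variable $s$ are legitimate for $\eta \in D(\mathbb{T})$, and those in space only use membership in $D(\mathcal{A})$. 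Crucially, working with $\psi \in \dot{H}^1(\R^3)$ means that $\|\psi\|_{L^2}$ never enters $\|\cdot\|_{\mathcal{E}(t)}$, which is exactly what obstructed globality in Theorem~\ref{Local_Existence_1}.

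The heart of the argument is then the bootstrap. By \eqref{energy_bound_Thm2} together with Lemmas~\ref{Lemma:R_a}--\ref{Lemma:R_b}, there are constants $C \geq 1$ and $C_2 > 0$, \emph{independent of $t$ and of $T$}, such that
\[
M(t) \leq C_1 + C_2\, M(t)^{3/2}, \qquad C_1 := C\,\|\vecc{\Psi}_0\|^2_{\mathcal{E}(0)},
\]
since $\|\vecc{\Psi}\|_{\mathcal{E}(t)}\,|\vecc{\Psi}|^2_{\mathcal{D}(t)} \leq M(t)^{1/2} M(t) = M(t)^{3/2}$. I would now apply Lemma~\ref{Lemma_Stauss} with $\kappa = 3/2$: one has $M(0) = \|\vecc{\Psi}_0\|^2_{\mathcal{E}(0)} \leq C_1$, and the smallness condition $C_1 C_2^{1/(\kappa-1)} < (1-1/\kappa)\kappa^{-1/(\kappa-1)}$ of that lemma holds provided $\|\vecc{\Psi}_0\|^2_{\mathcal{E}(0)} \leq \delta$ with $\delta$ chosen small enough (the right-hand side being a fixed positive constant). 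Lemma~\ref{Lemma_Stauss} then yields $M(t) < 3\,C_1$ for all $t \in [0, T_{\max})$. In particular $\|\vecc{\Psi}(t)\|_{\dot{\mathcal{H}}^2}$ is bounded uniformly in $t$, and the third equation of \eqref{Main_System} additionally gives uniform bounds on $\|v_t(t)\|_{H^1} = \|w(t)\|_{H^1}$ and $\|w_t(t)\|_{L^2}$.

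It remains to close the continuation. Suppose $T_{\max} < \infty$. The uniform bound just obtained shows that $\vecc{\Psi}(t)$ stays in a fixed bounded subset of $\dot{\mathcal{H}}^2$ (and $(v,w)$ in a fixed bounded subset of $H^1 \times L^2$ with uniformly bounded time derivatives) as $t \uparrow T_{\max}$. Since the existence time in Theorem~\ref{Thm:LocalExistence_3D} depends only on the size $\|\vecc{\Psi}_0\|_{\mathcal{E}(0)}$ of the data through the explicit requirement \eqref{smallness_T_new}, one may restart the local construction from any $t_0$ sufficiently close to $T_{\max}$ with a step length bounded below uniformly in $t_0$, thereby extending the solution beyond $T_{\max}$ and contradicting maximality. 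Hence $T_{\max} = \infty$, and the constructed solution belongs to $C([0,\infty); \dot{\mathcal{H}}^2)$ with $(v,w) \in C^1((0,\infty); H^1(\R^3) \times L^2(\R^3))$, as claimed.

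The main obstacle I expect is not the bootstrap itself—which is routine once the a priori inequality is in hand—but the rigorous justification of that inequality in the \emph{homogeneous-space} solution class: upgrading the formal identities of Section~\ref{Sec:EnergyEstimates} (all the $s$- and $x$-integrations by parts, the handling of the undefined-sign memory cross terms, the equivalence of $E_1, E_2, \mathcal{L}$ with $\mathscr{E}_1 + \mathscr{E}_2 + \|w\|_{L^2}^2$) to the functions produced by the fixed-point argument in $\dot{\mathcal{H}}^2$ rather than $\mathcal{H}^2$, and verifying via density/approximation that no boundary terms are lost. The second delicate point is careful bookkeeping of constants throughout Lemmas~\ref{Lemma_F_1}--\ref{Lemma_F_2} and \ref{Lemma:R_a}--\ref{Lemma:R_b} so that none of them depends on the interval length $T$—this is precisely what makes the continuation step uniform and hence the whole argument close.
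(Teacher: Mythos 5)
Your proposal is correct and follows essentially the same route as the paper: the a priori bound \eqref{energy_bound_Thm2} from Theorem~\ref{Thm:LocalExistence_3D} is turned into an inequality of the form $M \leq C_1 + C_2 M^{3/2}$, Lemma~\ref{Lemma_Stauss} with $\kappa = 3/2$ gives a time-uniform bound for small data, and a continuation argument (which the paper states more briefly than you do) extends the solution to $T=\infty$. The only cosmetic difference is that the paper works with the unsquared quantity $|||\vecc{\Psi}|||_{(0,t)}=\|\vecc{\Psi}\|_{\mathcal{E}(t)}+|\vecc{\Psi}|_{\mathcal{D}(t)}$ rather than your $M(t)$.
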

\begin{proof}
 Because of the term $-b\Delta_t u$ in equation \eqref{Main_Equation} and the type of nonlinearity in the model, we can prove the global existence without appealing to the decay of the linearized problem. Let $T>0$ be the maximal time of local existence given by Theorem \ref{Thm:LocalExistence_3D}. Our goal is to prove by a continuity argument that the norm 
\begin{eqnarray*}
|||\vecc{\Psi} |||_{(0,t)}=\| \vecc{\Psi} \|_{\mathcal{E}(t)}+|\vecc{\Psi} |_{\mathcal{D}(t)}
\end{eqnarray*}
is  uniformly bounded for all time if the initial energy $|\vecc{\Psi}_0|^2_{\mathcal{E}(0)} $ is sufficiently small. Note that thanks to estimates \eqref{est_wt} and \eqref{est_vt}, we know that 
\[ \|\vecc{\Psi}\|_{\mathcal{B}_L(0,t)} \lesssim  \|\vecc{\Psi} \|_{\mathcal{E}(t)}+|\vecc{\Psi} |_{\mathcal{D}(t)}=|||\vecc{\Psi} |||_{(0,t)},\] where the norm $\|\cdot\|_{\mathcal{B}_L(0,t)}$ is defined as in \eqref{norm_BL_R3}, only with the time interval $[0,T]$ replaced by $[0,t]$. Theorem~\ref{Thm:LocalExistence_3D} provides us with the energy bound
\begin{equation}
\begin{aligned}
\|\vecc{\Psi}\|^2_{\mathcal{E}(t)}+|\vecc{\Psi}|^2_{\mathcal{D}(t)} 
\lesssim&\,  \|\vecc{\Psi}\|^2_{\mathcal{E}(0)}+ \|\vecc{\Psi}\|_{\mathcal{E}(t)}|\vecc{\Psi}|^2_{\mathcal{D}(t)}, \quad t \in [0,T].
\end{aligned}
\end{equation}
This implies that for all $t\in[0,T]$, 
\begin{eqnarray}\label{Main_Y_Estimate}
|||\vecc{\Psi}|||_{(0,t)}\leq \|\vecc{\Psi}_0\|_{\mathcal{E}(0)}+C |||\vecc{\Psi}|||_{(0,t)}^{3/2},
\end{eqnarray}
On account of Lemma \ref{Lemma_Stauss}, the above inequality implies that there exists a positive constant $C$, independent of $t$, such that
\begin{eqnarray*}
|||\vecc{\Psi}|||_{(0,t)} \leq C. 
\end{eqnarray*}
This uniform bound guarantees that our local solution can be continued to $T=\infty$. 
\end{proof}
\noindent Accordingly, the JMGT equation in hereditary media with initial data \eqref{initial_data_global} admits a unique solution $\psi$ such that
\begin{equation}
\begin{aligned}
& \psi \in C([0, +\infty); \{\phi \in \dot{H}^1(\R^3):\, \nabla^2 \phi \in L^2(\R^3)\})\, \cap \, C^1([0, +\infty); \dot{H}^1(\R^3)),\\
& \psi_t \in  C([0, +\infty); H^2(\R^3)) \, \cap \, C^1([0, +\infty); H^1(\R^3)), \\
& \psi_{tt} \in C([0, +\infty); H^1(\R^3))\, \cap  \, C^1([0, +\infty); L^2(\R^3)).
\end{aligned}
\end{equation}
%%%%%%%%%%%%%%%%%%%%%%%%%%%%%%%%%%%%%%%%%%%%%%%%%%%%%%%%%%%%%%%%%%%%%%%%
\section{Decay rates for the JMGT equation in $\R^3$} \label{Sec:DecayRates}
We next wish to see if and how the solution of \eqref{Main_Equation} decays with time. To answer these questions, we first need to derive new decay estimates for $v=\psi_t$ in the linearized model. 
%%%%%%%%%%%%%%%%%%%%%%%%%%%%%%%%%%%%%%%%%%%%%%%%%%%%%%%%%%%%%%%%%%%%%%%%
\subsection{Decay estimates for the linearized system}
The corresponding linear problem is given by the system
 \begin{equation}  \label{Main_System_linear}
 \begin{cases}
 \psi_{t}=v, \\ 
 v_{t}=w, \\ 
 \tau w_{t}=- w+c^2_g\Delta \psi+b\Delta v + \displaystyle%
 \int_{0}^\infty g(s)\Delta\eta(s)\ds,
 \\ 
 \eta_{t}=v-\eta_{s},%
 \end{cases}%
 \end{equation}
 supplemented with the same initial data \eqref{Main_System_IC}.  To formulate the result, we introduce the vector
 \[\vecc{U}=(v+\tau w,\nabla( \psi+\tau v),\nabla v),\]
and the corresponding initial vector $\vecc{U}_0=(\psi_1+\tau \psi_2, \nabla(\psi_0+ \tau \psi_1), \nabla \psi_1)$. The decay rates for $\vecc{U}$ are given by the following three results.
 \begin{lemma}[see Theorem 3.1 in~\cite{Bounadja_Said_2019}]\label{Lemma_Decay_Linear}
 Let $s\geq 0$ be an integer. Assume that $\vecc{U}_0\in L^1(\R^n) \cap H^s(\R^n)$, where $n \in \N$, and that $b>\tau c^2$. Then, for any $0 \leq j\leq s$, it holds that 
 \begin{align}  \label{Decay_Linearized}
 		\Vert\nabla^{j}\vecc{U}(t)\Vert _{L^{2}}\,\lesssim \Vert \nabla^j\vecc{\Psi}(t)\Vert_{\mathscr{E}_1}\lesssim (1+t)^{-{n}/{4-{j}/{2}}}\Vert \vecc{U}_{0}\Vert _{L^{1}} +e^{-\frac{\lambda}{2} t} \Vert\nabla^{j}\vecc{U}_{0}\Vert _{L^{2}},
 		\end{align}
 	where $\lambda$ is a positive constant independent of $t$, and
 	\begin{equation} \label{new_norm}
 	\begin{aligned}
 	\|\vecc{\Psi}(t)\|_{\mathscr{E}_1} =&\, \begin{multlined}[t]\Vert\nabla(\psi +\tau \psi_t)(t)\Vert^{2}_{L^2} + \Vert(\psi_t+\tau \psi_{tt})(t)\Vert^{2}_{L^2}+\Vert \nabla \psi_t(t)\Vert^{2}_{L^2}
\\ 	+\Vert \nabla\eta\Vert^{2}_{L^2, -g'}.\end{multlined}
 	\end{aligned}
 	\end{equation} 
 \end{lemma}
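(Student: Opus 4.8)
Since this lemma coincides with \cite[Theorem~3.1]{Bounadja_Said_2019}, the plan is to recall the Fourier-based argument behind it rather than reprove it from scratch. First I would dispose of the left-hand inequality in \eqref{Decay_Linearized}, which is purely algebraic: because $\vecc{U}=(v+\tau w,\,\nabla(\psi+\tau v),\,\nabla v)$, one has $\norm{\nabla^{j}\vecc{U}(t)}_{L^{2}}^{2}=\norm{\nabla^{j}(v+\tau w)}_{L^{2}}^{2}+\norm{\nabla^{j+1}(\psi+\tau v)}_{L^{2}}^{2}+\norm{\nabla^{j+1}v}_{L^{2}}^{2}$, which is dominated by $\norm{\nabla^{j}\vecc{\Psi}(t)}_{\mathscr{E}_1}^{2}$ directly from the definition \eqref{new_norm} (the only extra term there being $\norm{\nabla^{j+1}\eta}_{L^{2},-g'}^{2}\geq0$). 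The real content is the second inequality, and for that I would pass to the Fourier side.

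Applying the spatial Fourier transform to the linear system \eqref{Main_System_linear} converts it into a family, indexed by $\xi\in\R^{n}$, of transport-type problems for $\widehat{v+\tau w}$, $i\xi\,\widehat{\psi+\tau v}$, $i\xi\,\hat v$, together with the history variable $\hat\eta(\xi,t,s)$, which still satisfies $\hat\eta_{t}+\hat\eta_{s}=\hat v$. Mimicking pointwise in $\xi$ the computations of Propositions~\ref{Prop:E1} and~\ref{Prop:E2} and Lemmas~\ref{Lemma_F_1} and~\ref{Lemma_F_2} — the energy identities together with the correctors built from the Fourier analogues of $F_1,F_2$ — I would construct a Lyapunov functional $\mathcal{L}(\xi,t)$ equivalent to $\abs{\widehat{\vecc{U}}(\xi,t)}^{2}+\norm{\hat\eta(\xi,t,\cdot)}_{-g'}^{2}$, uniformly in $\xi$, and satisfying
\[
\frac{\textup{d}}{\dt}\mathcal{L}(\xi,t)+c\,\frac{|\xi|^{2}}{1+|\xi|^{2}}\,\mathcal{L}(\xi,t)\leq 0,
\]
with $c>0$ independent of $\xi$ and $t$; assumptions (G1)--(G4) are exactly what is needed to control the memory contributions in this computation, just as in the bounded-domain case.

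Integrating this differential inequality in $t$ yields the pointwise decay $\abs{\widehat{\vecc{U}}(\xi,t)}^{2}\lesssim e^{-c\rho(\xi)t}\,\mathcal{L}(\xi,0)$ with $\rho(\xi)=|\xi|^{2}/(1+|\xi|^{2})$, and the analogous bound for the $\hat\eta$-part; here $\mathcal{L}(\xi,0)\lesssim\abs{\widehat{\vecc{U}}_{0}(\xi)}^{2}$ because $\eta_{0}=\psi_{0}$ and $\nabla\psi_{0}$ is a linear combination of the components of $\vecc{U}_{0}$. By Plancherel's theorem, $\norm{\nabla^{j}\vecc{\Psi}(t)}_{\mathscr{E}_1}^{2}\lesssim\int_{\R^{n}}|\xi|^{2j}e^{-c\rho(\xi)t}\abs{\widehat{\vecc{U}}_{0}(\xi)}^{2}\,\textup{d}\xi$, and I would split this integral at $|\xi|=1$. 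On the high-frequency region $|\xi|\geq1$ one has $\rho(\xi)\geq1/2$, so that part is bounded by $e^{-ct/2}\norm{\nabla^{j}\vecc{U}_{0}}_{L^{2}}^{2}$. On the low-frequency region $|\xi|\leq1$ one uses $\rho(\xi)\geq|\xi|^{2}/2$ and $\abs{\widehat{\vecc{U}}_{0}(\xi)}\leq\norm{\vecc{U}_{0}}_{L^{1}}$, which reduces matters to $\norm{\vecc{U}_{0}}_{L^{1}}^{2}\int_{|\xi|\leq1}|\xi|^{2j}e^{-c|\xi|^{2}t/2}\,\textup{d}\xi$; passing to polar coordinates and invoking Lemma~\ref{Lemma:Ineq} with $n$ replaced by $n+2j$ bounds this by $C(1+t)^{-n/2-j}\norm{\vecc{U}_{0}}_{L^{1}}^{2}$. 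Taking square roots and adding the two regimes gives \eqref{Decay_Linearized}.

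I expect the construction of the frequency-localized Lyapunov functional to be the main obstacle: one must reproduce, uniformly in $\xi$, all the cancellations of the time-domain energy argument while carrying along the infinite-dimensional memory variable $\hat\eta(\xi,\cdot,t)$, and — crucially — verify that the resulting dissipation rate really is comparable to $|\xi|^{2}/(1+|\xi|^{2})$, degenerating only at $\xi=0$. This degeneracy at low frequencies is precisely what forces the algebraic (rather than exponential) decay in \eqref{Decay_Linearized}. Since all of this is carried out in detail in \cite{Bounadja_Said_2019}, in the write-up I would simply cite that reference and reproduce only the splitting argument that extracts the polynomial rate via Lemma~\ref{Lemma:Ineq}.
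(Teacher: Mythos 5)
Your outline is correct and matches the argument behind the cited result: the paper itself does not reprove this lemma but imports it from \cite{Bounadja_Said_2019}, and the ingredients you describe (the frequency-localized Lyapunov functional giving $\hat{E}_1(\xi,t)\lesssim \hat{E}_1(\xi,0)\exp(-\lambda\tfrac{|\xi|^2}{1+|\xi|^2}t)$, the bound $\hat{E}_1(\xi,0)\lesssim|\hat{\vecc{U}}_0(\xi)|^2$, Plancherel, and the splitting at $|\xi|=1$ with Lemma~\ref{Lemma:Ineq}) are exactly the ones the paper recalls from that reference when proving Proposition~\ref{Decay_w_New}. Your decision to cite the reference for the Lyapunov construction and reproduce only the frequency-splitting step is consistent with how the paper handles it.
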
 
 Estimate \eqref{Decay_Linearized} does not directly yield a decay rate for $\Vert \nabla^j \psi_t\Vert_{L^2}=\Vert \nabla^j v\Vert_{L^2}$. However, we can obtain it through the bound
   \begin{equation}\label{Ineq_L_2}
\Vert \nabla ^j v\Vert_{L^2}\lesssim \Vert \nabla^j(v+\tau w)\Vert_{L^2}+\Vert \nabla^j w\Vert_{L^2}
\end{equation} 
and \eqref{Decay_Linearized} if we have a decay rate for  $\Vert w\Vert_{L^2}$. This rate is the result of the next proposition. 
\begin{proposition}\label{Decay_w_New}
Let the assumptions of Lemma~\ref{Lemma_Decay_Linear} hold with $s \geq 1$ and let $w_0 \in H^s(\R^n)$. Then, for any $n\in \N$ and any $0 \leq j\leq s-1$, we have
\begin{eqnarray}\label{Decay_estimate_W} 
\Vert\nabla^{j}w(t)\Vert_{L^{2}}\lesssim (\Vert \nabla^j w_0\Vert_{L^2}+\Vert \vecc{U}_{0}\Vert_{L^{1}}+\Vert\nabla^{j+1}\vecc{U}_{0}\Vert_{L^{2}})(1+t)^{-\frac{n}{4}-\frac{j}{2}-\frac{1}{2}},
\end{eqnarray}
provided that the thermal relaxation $\tau>0$ is sufficiently small.
\end{proposition}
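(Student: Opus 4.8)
The plan is to read the third equation of the linearized system~\eqref{Main_System_linear} as a scalar linear ODE in time for $w$, namely
\[
\tau w_t + w = G, \qquad G(t):= c^2_g\Delta \psi(t)+b\Delta v(t)+\int_0^\infty g(r)\Delta\eta(t,r)\,\textup{d}r,
\]
and to solve it by Duhamel's formula,
\[
w(t)=e^{-t/\tau}w_0+\frac1\tau\int_0^t e^{-(t-s)/\tau}G(s)\,\textup{d}s .
\]
Applying $\nabla^{j}$, taking $L^{2}$-norms and using the triangle inequality gives
\[
\|\nabla^j w(t)\|_{L^2}\leq e^{-t/\tau}\|\nabla^j w_0\|_{L^2}+\frac1\tau\int_0^t e^{-(t-s)/\tau}\|\nabla^j G(s)\|_{L^2}\,\textup{d}s,
\]
so the proof reduces to (i) bounding $\|\nabla^j G(s)\|_{L^2}$ by a quantity that decays one order faster than $\mathscr{E}_1^{1/2}$, and (ii) performing a convolution with the exponential kernel $\tfrac1\tau e^{-\cdot/\tau}$.

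For step (i) I would first rewrite $G = c^2_g\Delta(\psi+\tau v)+(b-\tau c^2_g)\Delta v+\int_0^\infty g(r)\Delta\eta(r)\,\textup{d}r$ and use $\Delta=\mathrm{div}\,\nabla$, so that $\|\nabla^j\Delta(\psi+\tau v)\|_{L^2}\leq \|\nabla^{j+1}\nabla(\psi+\tau v)\|_{L^2}$ and $\|\nabla^j\Delta v\|_{L^2}\leq\|\nabla^{j+1}\nabla v\|_{L^2}$. For the memory part, $\|\nabla^j\Delta\eta(r)\|_{L^2}\leq\|\nabla^{j+1}\nabla\eta(r)\|_{L^2}$, and then the Cauchy--Schwarz inequality together with (G2) (finiteness of $\int_0^\infty g\,\textup{d}r$) and (G3) (which yields $g\le -g'/\zeta$) gives
\[
\Big\|\nabla^j\!\int_0^\infty\! g(r)\Delta\eta(r)\,\textup{d}r\Big\|_{L^2}\leq \Big(\int_0^\infty\! g\,\textup{d}r\Big)^{1/2}\zeta^{-1/2}\,\|\nabla^{j+1}\nabla\eta\|_{L^2,-g'}.
\]
Collecting these bounds, $\|\nabla^j G(s)\|_{L^2}\lesssim \|\nabla^{j+1}\vecc{\Psi}(s)\|_{\mathscr{E}_1}$, with $\|\cdot\|_{\mathscr{E}_1}$ as in~\eqref{new_norm}. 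Since $0\le j\le s-1$, Lemma~\ref{Lemma_Decay_Linear} applies at differentiation order $j+1$ and provides
\[
\|\nabla^{j+1}\vecc{\Psi}(s)\|_{\mathscr{E}_1}\lesssim (1+s)^{-n/4-(j+1)/2}\|\vecc{U}_0\|_{L^1}+e^{-\frac{\lambda}{2}s}\|\nabla^{j+1}\vecc{U}_0\|_{L^2}.
\]

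For step (ii), splitting $\int_0^t=\int_0^{t/2}+\int_{t/2}^t$ and using $\frac1\tau\int_0^t e^{-(t-s)/\tau}\,\textup{d}s=1-e^{-t/\tau}\le 1$ gives, for any $a>0$, both $\frac1\tau\int_0^t e^{-(t-s)/\tau}(1+s)^{-a}\,\textup{d}s\lesssim (1+t)^{-a}$ and $\frac1\tau\int_0^t e^{-(t-s)/\tau}e^{-\frac{\lambda}{2}s}\,\textup{d}s\lesssim (1+t)^{-a}$ (an exponential dominates any polynomial). Combined with $e^{-t/\tau}\|\nabla^j w_0\|_{L^2}\lesssim (1+t)^{-n/4-j/2-1/2}\|\nabla^j w_0\|_{L^2}$ and the choice $a=n/4+j/2+1/2$, this is precisely~\eqref{Decay_estimate_W}. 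The smallness of $\tau$ is needed only to keep the implied constants uniform — in particular to bound the factor $(1-\tau\lambda/2)^{-1}$ arising in the second convolution when $1/\tau\neq\lambda/2$ — and is consistent with the hypothesis $b>\tau c^2$ required by Lemma~\ref{Lemma_Decay_Linear}.

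\textbf{Main obstacle.} The computation is short, and the only genuinely delicate point is the treatment of the memory term in $G$: only the $(-g')$-weighted norm of $\nabla\eta$ is dissipated and controlled by $\mathscr{E}_1$, so the history convolution $\int_0^\infty g(r)\Delta\eta(r)\,\textup{d}r$ must be converted, via (G2)--(G3), into a bound by $\|\nabla^{j+1}\vecc{\Psi}\|_{\mathscr{E}_1}$ before Lemma~\ref{Lemma_Decay_Linear} can be applied at order $j+1$. A secondary point is that the Duhamel representation is only apparently singular in $\tau$: the prefactor $1/\tau$ is absorbed by the unit mass of the kernel $\tfrac1\tau e^{-\cdot/\tau}$, so no gain of one extra power of $(1+t)^{-1/2}$ is lost through it — that gain comes entirely from differentiating $\vecc{\Psi}$ one order higher in Lemma~\ref{Lemma_Decay_Linear}.
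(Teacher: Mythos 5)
Your argument is correct, but it follows a genuinely different route from the paper's. The paper never writes a Duhamel formula for $w$ in physical space: it works entirely on the Fourier side, combining the frequency-pointwise energy decay $\hat{E}_1(\xi,t)\lesssim\hat{E}_1(\xi,0)\exp\!\big(-\lambda\tfrac{|\xi|^2}{1+|\xi|^2}t\big)$ (quoted from the earlier linear analysis) with the differential inequality $\tfrac{\tau}{2}\tfrac{\textup{d}}{\dt}|\hat w|^2+\tfrac12|\hat w|^2\lesssim|\xi|^2\hat E_1(\xi,t)$, applying Gronwall's lemma for each fixed $\xi$, and only then invoking Plancherel's theorem together with a low/high-frequency splitting and Lemma~\ref{Lemma:Ineq}; there the extra factor $(1+t)^{-1/2}$ comes from the additional power $|\xi|^2$ multiplying $\hat E_1(\xi,0)$, and the smallness of $\tau$ keeps $\tfrac1\tau-\lambda\tfrac{|\xi|^2}{1+|\xi|^2}$ bounded away from zero uniformly in $\xi$. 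You instead take $L^2$-norms first, treat the third equation as the scalar ODE $\tau w_t+w=G$, bound $\|\nabla^jG\|_{L^2}$ by $\|\nabla^{j+1}\vecc{\Psi}\|_{\mathscr{E}_1}$ --- your reduction of the history term to the $(-g')$-weighted norm via (G2)--(G3) and Cauchy--Schwarz is exactly the right move, and is the one genuinely delicate step --- and then invoke Lemma~\ref{Lemma_Decay_Linear} at order $j+1$ as a black box; the convolution with the unit-mass kernel $\tfrac1\tau e^{-\cdot/\tau}$ preserves the polynomial rate, and the smallness of $\tau$ enters only through the constant $(1-\tau\lambda/2)^{-1}$, which is the same mechanism as the paper's restriction $\tau<1/\lambda$. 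Your version is shorter and more modular, since it reuses the stated decay lemma instead of re-deriving frequency-resolved estimates, and it yields the identical rate and the identical dependence on $\Vert \nabla^j w_0\Vert_{L^2}$, $\Vert\vecc{U}_0\Vert_{L^1}$ and $\Vert\nabla^{j+1}\vecc{U}_0\Vert_{L^2}$; the paper's version retains frequency-resolved information, which is what its cited linear reference actually provides, but gains nothing sharper for this particular statement. Your closing observation --- that the extra half power of decay comes from differentiating $\vecc{U}_0$ once more and not from the $1/\tau$ prefactor --- matches the paper's mechanism exactly.
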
   
 \begin{proof}
For proving the above estimate, we need to employ the decay rates of the Fourier transform of the solution; cf.~\cite{Bounadja_Said_2019}. Recall how the low-order energy $E_1$ is defined in \eqref{energy}. We then define
     \[\hat{E}_1(\xi,t)=\mathscr{F}(E_1(x,t)),\]  
 where $``\mathscr{F}"$ stands for the Fourier transform and the variable dual to $x$ is denoted by $\xi$. Then the following estimate holds:
 \begin{equation}\label{Eexp}
     \hat{E}_1(\xi,t)\lesssim \hat{E}_1(\xi,0)\exp{(-\lambda \tfrac{|\xi|^2}{1+|\xi|^2}t)}
     \end{equation} 
for all $t \geq 0$; cf.~\cite[Proposition 4.1]{Bounadja_Said_2019}. The constant $\lambda$ is positive and independent of $t$ and $\xi$. For the linearized problem, it is clear that estimate \eqref{E_0_Energy} holds with $R^{(1)}$ set to zero. In other words, we have
	\begin{equation} 
\begin{aligned}
\frac{1}{2}\frac{\textup{d}}{\dt}\int_{\mathbb{R}^{n}}\tau \left\vert  
w\right\vert ^{2}\dx+\frac{1}{2}\int_{\mathbb{R}^{n}} |w|^2 \dx
\lesssim \,
\Vert
\Delta (\psi+\tau v)\Vert_{L^2}^2+\Vert \Delta v\Vert _{L^{2}}^2+\Vert\Delta\eta\Vert^{2}_{L^2, g}.
\end{aligned}
\end{equation}
 Thus we know that
  \begin{equation}\label{w_Energy_Fourier}
	\begin{aligned}
	\frac{1}{2}\frac{\textup{d}}{\dt}\tau \left\vert \hat{w}\right\vert
	^{2}+ \frac{1}{2}|\hat{w}|^2 \lesssim \, |\xi|^2 \hat{E}_1(\xi,t).
	\end{aligned}
	\end{equation}
By plugging in estimate \eqref{Eexp} for $\hat{E}_1(\xi,t)$ in the above inequality, we obtain
\begin{equation}
\frac{\textup{d}}{\dt} \left\vert \hat{w}\right\vert
	^{2}\leq -\frac{1}{\tau} \left\vert \hat{w}\right\vert
	^{2}+C|\xi|^2 \hat{E}_1(\xi,0)\exp{\left(-\lambda \tfrac{|\xi|^2}{1+|\xi|^2}t\right)}.  
\end{equation} 
We can then apply the differential version of Gronwall's inequality to arrive at 
 \begin{equation}\label{w_Estimate_main}
 \begin{aligned}
\left\vert \hat{w}\right\vert
	^{2} \leq&\, |\hat{w}_0|^2 \exp{\left(-\tfrac{1}{\tau} t\right)} +C|\xi|^2\hat{E}_1(\xi,0)\int_0^t \exp{\left(-\lambda \tfrac{|\xi|^2}{1+|\xi|^2} s\right)}\, \exp{\left(-\tfrac{1}{\tau}(t-s)\right)}\ds,
\end{aligned}	
\end{equation}
which directly leads to	
 \begin{equation}\label{w_Estimate_main}
\begin{aligned}
\left\vert \hat{w}\right\vert
^{2} \leq& \begin{multlined}[t]
	 \, |\hat{w}_0|^2 \exp{\left(-\tfrac{1}{\tau} t\right)}\\ +C|\xi|^2\hat{E}_1(\xi,0)\exp{\left(-\tfrac{1}{\tau} t\right)}\left(\tfrac{1}{\tau}-\lambda \tfrac{|\xi^2|}{1+|\xi|^2}\right)^{-1} \left[\exp{\left(-(\lambda \tfrac{|\xi|^2}{1+|\xi|^2}-\tfrac{1}{\tau})t\right)} -1\right]. \end{multlined}
\end{aligned}	
\end{equation}
To further bound the right-hand side side, we can use the identity 
\[\frac{1}{\frac{1}{\tau}-\lambda \frac{|\xi^2|}{1+|\xi|^2}}=
\frac{\tau  \left(|\xi|^2+1\right)}{|\xi|^2 (1-\lambda  \tau )+1}.
\] 
Assuming that the thermal relaxation is small enough so that $\tau< \frac{1}{\lambda}$, it holds
\begin{equation}
\frac{1}{\frac{1}{\tau}-\lambda \frac{|\xi^2|}{1+|\xi|^2}}\leq \frac{\tau}{1-\lambda  \tau}.
\end{equation}
Altogether, for small $\tau>0$, we obtain 
\begin{equation}\label{w_Fourier_Estimate}
\left\vert \hat{w}\right\vert
	^{2}\leq |\hat{w}_0|^2 \exp{\left(-\tfrac{1}{\tau} t\right)}+C|\xi|^2 \hat{E}_1(\xi,0)\exp{\left(-\lambda \tfrac{|\xi|^2}{1+|\xi|^2}t\right)}.  
\end{equation}
We can use the estimate
\begin{equation} \label{ineq_U_E_1}
\hat{E}_1(\xi,0) \lesssim |\hat{\vecc{U}}(\xi,0)|^2,  
\end{equation}
where $\hat{\vecc{U}}(\xi,t)=\mathcal{F}(\vecc{U}(x,t))$; see~\cite[Lemma 4.3]{Bounadja_Said_2019}. By applying Plancherel's theorem and \eqref{ineq_U_E_1}, we find 
\begin{equation}\label{Plancherel}
\begin{aligned}
\Vert\nabla^{j}w(t)\Vert_{L^{2}}^{2}=& \, \int_{\R^{n}}|\xi|^{2j}|\hat{w}(\xi,t)|^{2}\, \textup{d}\xi\\
	\lesssim& \,\Vert \nabla^j w_0\Vert_{L^2}^2\exp{\left(-\tfrac{1}{\tau} t\right)}+ \int_{\R^{n}}|\xi|^{2(j+1)} \exp{\left(-\lambda \tfrac{|\xi|^2}{1+|\xi|^2}t\right)}|\hat{\vecc{U}}(\xi,0)|^{2}\, \textup{d}\xi 
\end{aligned}	
\end{equation}
for any $j\geq 0$.  The second term on the right-hand side of estimate \eqref{Plancherel} can be split into two terms as follows:
\begin{equation} \label{split Intg} 
\begin{aligned}
     &\int_{\R^{n}}|\xi|^{2(j+1)} \exp{\left(-\lambda \tfrac{|\xi|^2}{1+|\xi|^2}t\right)}|\hat{\vecc{U}}(\xi,0)|^{2}\textup{d} \xi\\
     =& \, \begin{multlined}[t]\int_{|\xi|\leq 1}|\xi|^{2(j+1)} \exp{\left(-\lambda \tfrac{|\xi|^2}{1+|\xi|^2}t\right)}|\hat{\vecc{U}}(\xi,0)|^{2}\textup{d} \xi \\
     + \int_{|\xi|\geq 1}|\xi|^{2(j+1)} \exp{\left(-\lambda \tfrac{|\xi|^2}{1+|\xi|^2}t\right)}|\hat{\vecc{U}}(\xi,0)|^{2}\textup{d}\xi.\end{multlined}
     	\end{aligned}
\end{equation}   
We can then use the bound
\begin{equation} \label{rho*}
\frac{|\xi|^2}{1+|\xi|^2}\geq 
\left \{ \begin{aligned}
&\dfrac{1}{2}|\xi|^{2}  \quad &&\text{if }\ |\xi|\leq 1, \\
&\dfrac{1}{2} \quad &&\text{if } \ |\xi|\geq 1. \end{aligned}
\right.
\end{equation}  	
Concerning the first integral on the right in \eqref{split Intg}, by exploiting the inequality \[\int_{0}^{1}r^{n-1}e^{-r^{2}t}\textup{d}r \leq C(n)(1+t)^{-{n}/{2}},\]
given in Lemma~\ref{Lemma:Ineq} together with \eqref{rho*}, we find that
\begin{equation}\label{I1}
\begin{aligned}
     	\int_{|\xi|\leq 1}|\xi|^{2j} \exp{\left(-\lambda \tfrac{|\xi|^2}{1+|\xi|^2}t\right)}|\hat{\vecc{U}}(\xi,0)|^{2}\textup{d} \xi\leq& \, \Vert\hat{\vecc{U}}_{0}\Vert_{L^{\infty}}^{2}\int_{|\xi|\leq 1}|\xi|^{2(j+1)} \exp{\left(-\tfrac{\lambda}{2} \tfrac{|\xi|^2}{1+|\xi|^2}t\right)}\, \textup{d}\xi\\
     	\lesssim& \, (1+t)^{-\frac{n}{2}-1-j}\Vert \vecc{U}_{0}\Vert_{L^{1}}^{2}.
\end{aligned}
\end{equation}
On the other hand, in the high-frequency region where $|\xi|\geq 1$, we have 
\begin{equation}\label{I2}
\begin{aligned}
\int_{|\xi|\geq 1}|\xi|^{2j} \exp{\left(-\lambda \tfrac{|\xi|^2}{1+|\xi|^2}t\right)}|\hat{\vecc{U}}(\xi,0)|^{2}\textup{d}\xi
\leq&\, e^{-\frac{\lambda}{2}t}\int_{|\xi|\geq 1}|\xi|^{2(j+1)} |\hat{\vecc{U}}(\xi,0)|^{2}d\xi \\
\leq& \, e^{-\frac{\lambda}{2}t}\Vert\nabla^{j+1}\vecc{U}_{0}\Vert_{L^{2}}^{2}.
\end{aligned}	
\end{equation} 
By plugging the above two estimates into inequality \eqref{Plancherel}, we finally obtain 
\begin{equation}
\Vert\nabla^{j}w(t)\Vert_{L^{2}}\lesssim e^{-\frac{1}{2\tau} t}\Vert \nabla^j w_0\Vert_{L^2}+C(1+t)^{-\frac{n}{4}-\frac{j}{2}-\frac{1}{2}}\Vert \vecc{U}_{0}\Vert_{L^{1}(\R^{n})}+ e^{-\frac{\lambda}{4}t}\Vert\nabla^{j+1}\vecc{U}_{0}\Vert_{L^{2}}. 
\end{equation}
This implies that estimate \eqref{Decay_estimate_W} holds for large $t$, which completes the proof.  
\end{proof} 
\noindent We are ready to prove the decay rate for $v=\psi_t$.	
	\begin{lemma}
Let the assumptions of Proposition~\ref{Decay_w_New} hold. Then, for any $n\in \N$ and any $0 \leq j\leq s-1$, we have
	\begin{equation}\label{v_L_2_Estimate}
\Vert\nabla^{j}v(t)\Vert_{L^{2}}\lesssim (\Vert \nabla^j w_0\Vert_{L^2}+\Vert \vecc{U}_{0}\Vert_{L^{1}}+\Vert\nabla^{j}\vecc{U}_{0}\Vert_{H^{1}})(1+t)^{-\frac{n}{4}-\frac{j}{2}}.
\end{equation}
Furthermore, assuming $\vecc{U}_0 \in L^1(\R^3) \cap H^3(\R^3)$ and $w_0 \in H^2(\R^3)$, it holds
\begin{equation}\label{v_L_infty_Estimate}
\Vert v(t) \Vert_{L^\infty}\lesssim (\Vert w_0\Vert_{H^2}+\Vert \vecc{U}_0\Vert_{L^1}+\Vert \vecc{U}_0\Vert_{H^3})(1+t)^{-\frac{n}{2}}. 
\end{equation}    
\end{lemma}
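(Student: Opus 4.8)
The plan is to obtain both bounds as direct corollaries of the first estimate \eqref{v_L_2_Estimate} together with the decay rates already established for the vector $\vecc{U}$ (Lemma~\ref{Lemma_Decay_Linear}) and for $w$ (Proposition~\ref{Decay_w_New}); no further Fourier-side work is needed.

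First I would prove \eqref{v_L_2_Estimate}. The starting point is the elementary splitting \eqref{Ineq_L_2}, which reduces matters to controlling $\Vert\nabla^j(v+\tau w)(t)\Vert_{L^2}$ and $\Vert\nabla^j w(t)\Vert_{L^2}$ separately. Since $v+\tau w$ is the first component of $\vecc{U}$, Lemma~\ref{Lemma_Decay_Linear} gives $\Vert\nabla^j(v+\tau w)(t)\Vert_{L^2}\lesssim (1+t)^{-n/4-j/2}\Vert\vecc{U}_0\Vert_{L^1}+e^{-\lambda t/2}\Vert\nabla^j\vecc{U}_0\Vert_{L^2}$, and the exponential term is absorbed into $(1+t)^{-n/4-j/2}$ up to a constant depending only on $n,j,\lambda$. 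For the remaining term, Proposition~\ref{Decay_w_New} yields $\Vert\nabla^j w(t)\Vert_{L^2}\lesssim(\Vert\nabla^j w_0\Vert_{L^2}+\Vert\vecc{U}_0\Vert_{L^1}+\Vert\nabla^{j+1}\vecc{U}_0\Vert_{L^2})(1+t)^{-n/4-j/2-1/2}$, which decays strictly faster than the $(v+\tau w)$-part, so the latter is the bottleneck. Adding the two bounds and using $\Vert\nabla^j\vecc{U}_0\Vert_{L^2}+\Vert\nabla^{j+1}\vecc{U}_0\Vert_{L^2}\simeq\Vert\nabla^j\vecc{U}_0\Vert_{H^1}$ produces exactly \eqref{v_L_2_Estimate}.

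For the $L^\infty$ bound \eqref{v_L_infty_Estimate} I would invoke the Gagliardo--Nirenberg inequality in $\R^3$, $\Vert v\Vert_{L^\infty}\lesssim\Vert\nabla v\Vert_{L^2}^{1/2}\Vert\nabla^2 v\Vert_{L^2}^{1/2}$, and insert \eqref{v_L_2_Estimate} at the orders $j=1$ and $j=2$; the data that this consumes, $w_0\in H^2(\R^3)$ and $\vecc{U}_0\in L^1(\R^3)\cap H^3(\R^3)$, is precisely what is assumed. Multiplying the two temporal rates gives $(1+t)^{-\frac12(n/4+1/2)-\frac12(n/4+1)}=(1+t)^{-n/4-3/4}$, which for $n=3$ equals $(1+t)^{-n/2}$, and bounding the resulting product of data norms by the sum $\Vert w_0\Vert_{H^2}+\Vert\vecc{U}_0\Vert_{L^1}+\Vert\vecc{U}_0\Vert_{H^3}$ via Young's inequality $(D_1D_2)^{1/2}\le\tfrac12(D_1+D_2)$ closes the argument.

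The computations are routine, so there is no genuine obstacle here; the only points requiring care are verifying that the interpolation exponents reproduce exactly the claimed rate in dimension three, and tracking which regularity of the data is used at each order $j$ (observing that the level-$j$ version of \eqref{v_L_2_Estimate} in fact only calls on $\Vert\nabla^j w_0\Vert_{L^2}$ and $\Vert\nabla^j\vecc{U}_0\Vert_{H^1}$, which is why $w_0\in H^2$ suffices for the $L^\infty$ statement despite the use of $j$ up to $2$).
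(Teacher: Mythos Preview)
Your argument is correct and matches the paper's almost exactly: the first estimate is obtained in the paper precisely by combining \eqref{Ineq_L_2} with \eqref{Decay_Linearized} and \eqref{Decay_estimate_W}, as you propose. For \eqref{v_L_infty_Estimate} the paper also uses a Gagliardo--Nirenberg interpolation followed by \eqref{v_L_2_Estimate}, the only cosmetic difference being that it interpolates between $j=0$ and $j=2$ via $\|v\|_{L^\infty}\lesssim\|\nabla^2 v\|_{L^2}^{n/4}\|v\|_{L^2}^{1-n/4}$ rather than between $j=1$ and $j=2$; both choices produce the rate $(1+t)^{-n/4-3/4}=(1+t)^{-3/2}$ and consume the same data regularity.
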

\begin{proof}
The estimate \eqref{v_L_2_Estimate} is a result of combining the bounds \eqref{Decay_Linearized}, \eqref{Decay_estimate_W}, and estimate \eqref{Ineq_L_2}. To prove the second estimate, we use the Gagliardo--Nirenberg interpolation inequality in the form of
	\begin{equation}\label{L_infty_Interp}
\left\Vert v\right\Vert _{L^{\infty }}\leq C\left\Vert \nabla ^{2}%
v\right\Vert _{L^{2}}^{\frac{n}{4}}\left\Vert v%
\right\Vert _{L^{2}}^{1-\frac{n}{4}}. 
\end{equation}
Taking into account estimate \eqref{v_L_2_Estimate} immediately yields \eqref{v_L_infty_Estimate}. \end{proof}
\subsection{Decay estimates for the nonlinear problem}   
We are now ready to prove decay estimates for the solution to the nonlinear problem. Similarly to before, we introduce the vector 
\[\vecc{U}=(v+\tau w,\nabla( \psi+\tau v),\nabla v),\]
where now $(\psi, v, w, \eta)$ solves the nonlinear problem.
\begin{theorem}
\label{Theorem_Decay} Let $b>\tau c^2> \tau c^2_g$ and $n=3$. Assume that the initial data $(\psi_0, \psi_1, \psi_2)$ satisfy the regularity and smallness assumptions \eqref{initial_data_global} and \eqref{data_smallness3D}. Furthermore, suppose that $\vecc{U}_0 = \vecc{U}(t=0)\in L^1(\R^3) \cap H^1(\R^3)$ and $w_0\in L^2(\R^3)$, and that  
\begin{equation} \label{Lambda_0}
\Lambda_0=\Vert w_0\Vert_{L^2}+ \Vert \vecc{U}_{0}\Vert _{L^{1}}+\Vert  \vecc{U}_{0}\Vert _{H^1}
\end{equation}
 is small enough.   Then, the global solution of \eqref{eta syst} satisfies the following decay rates:
\begin{equation} \label{decay_rates}
\begin{aligned}
&\, \Vert \nabla^j\vecc{U}(t)\Vert_{L^2}\lesssim  \,\Lambda_0(1+t)^{-\frac{n}{4}-\frac{j}{2}} \quad \text{for} \ j=0, 1,\\[1mm]
& \, \Vert v(t)\Vert_{L^2}\lesssim \, \Lambda_0(1+t)^{-\frac{n}{4}}, \\[1mm]
&\Vert w(t)\Vert_{L^2}\lesssim \, \Lambda_0(1+t)^{-\frac{n}{4}-\frac{1}{2}}.
\end{aligned}
\end{equation}
\end{theorem}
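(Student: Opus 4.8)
The plan is to run a bootstrap (continuity) argument on a time-weighted norm built from the decay rates we want to prove, exploiting Duhamel's formula together with the linear decay estimates from Lemma~\ref{Lemma_Decay_Linear}, Proposition~\ref{Decay_w_New}, and the nonlinear energy bound \eqref{energy_bound_Thm2}. Concretely, I would define, for $t \geq 0$,
\begin{equation*}
\mathcal{M}(t) = \sup_{0 \leq \sigma \leq t} \left\{ \sum_{j=0}^{1} (1+\sigma)^{n/4 + j/2} \|\nabla^j \vecc{U}(\sigma)\|_{L^2} + (1+\sigma)^{n/4}\|v(\sigma)\|_{L^2} + (1+\sigma)^{n/4 + 1/2}\|w(\sigma)\|_{L^2} \right\},
\end{equation*}
and aim to show $\mathcal{M}(t) \lesssim \Lambda_0 + \mathcal{M}(t)^2$ uniformly in $t$, which by smallness of $\Lambda_0$ (Lemma~\ref{Lemma_Stauss}, or a direct continuity argument) forces $\mathcal{M}(t) \lesssim \Lambda_0$ for all time, giving \eqref{decay_rates}.

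First I would write the nonlinear system \eqref{Main_System_linear}$+$nonlinearity in mild form: $\vecc{\Psi}(t) = e^{t\mathcal{A}}\vecc{\Psi}_0 + \int_0^t e^{(t-\sigma)\mathcal{A}} \mathcal{F}(\vecc{\Psi}(\sigma))\,\textup{d}\sigma$, where $\mathcal{F} = \tfrac{2k}{\tau}[0,0,vw,0]^T$. Projecting onto the $\vecc{U}$-components and onto $w$, the linear semigroup part is controlled directly by Lemma~\ref{Lemma_Decay_Linear} and Proposition~\ref{Decay_w_New} in terms of $\|\vecc{U}_0\|_{L^1 \cap H^1}$ and $\|w_0\|_{L^2}$, i.e. by $\Lambda_0$. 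For the Duhamel integral, the inhomogeneity is the forcing $f = \tfrac{2k}{\tau}vw$ entering the $w$-equation; applying the same linear estimates to $e^{(t-\sigma)\mathcal{A}}\mathcal{F}(\vecc{\Psi}(\sigma))$ I would bound $\|\nabla^j \vecc{U}(t)\|_{L^2}$ and $\|w(t)\|_{L^2}$ by time-integrals of $\|\nabla^j(vw)(\sigma)\|_{L^2}$ and $\|vw(\sigma)\|_{L^1 \cap L^2}$ against the decay kernels $(1+t-\sigma)^{-n/4-j/2}$ and $e^{-\lambda(t-\sigma)/2}$. The nonlinear terms are then estimated by H\"older, Ladyzhenskaya \eqref{Ladyz_Ineq}, Gagliardo--Nirenberg \eqref{Interpolation_inequality}, and \eqref{First_inequaliy_Guass}: e.g. $\|vw\|_{L^1} \leq \|v\|_{L^2}\|w\|_{L^2}$, $\|vw\|_{L^2} \leq \|v\|_{L^\infty}\|w\|_{L^2}$ with $\|v\|_{L^\infty} \lesssim \|\nabla v\|_{L^2}^{1/2}\|\nabla^2 v\|_{L^2}^{1/2}$ (here $n=3$), and $\|\nabla(vw)\|_{L^2} \lesssim \|v\|_{L^\infty}\|\nabla w\|_{L^2} + \|w\|_{L^4}\|\nabla v\|_{L^4}$. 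Each such product, after inserting the definition of $\mathcal{M}(\sigma)$ and using that $\|\nabla^2 v\|_{L^2}$, $\|\nabla w\|_{L^2}$ are bounded (uniformly in time) by $\|\vecc{\Psi}\|_{\mathcal{E}(\sigma)} \lesssim \|\vecc{\Psi}_0\|_{\mathcal{E}(0)} \leq \delta$ via \eqref{energy_bound_Thm2} and \eqref{v,Nabla_w}, produces a factor $\mathcal{M}(\sigma)^2 (1+\sigma)^{-\beta}$ for a suitable exponent $\beta$; the remaining task is to check that $\int_0^t (1+t-\sigma)^{-\gamma}(1+\sigma)^{-\beta}\,\textup{d}\sigma \lesssim (1+t)^{-\min(\gamma,\beta)}$ with the right target exponent, which holds provided $\beta > 1$. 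This splitting of the convolution integral at $\sigma = t/2$ is routine.

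The main obstacle I anticipate is the borderline integrability of the time-convolution for the lowest-order quantities. In $n=3$, the $L^1$-norm of the nonlinearity $vw$ decays like $(1+\sigma)^{-n/4}\cdot(1+\sigma)^{-n/4-1/2} = (1+\sigma)^{-n/2-1/2} = (1+\sigma)^{-2}$, which is just integrable, and the $L^2$-norm of $vw$ decays like $(1+\sigma)^{-n/4-1/2}$ times bounded higher-order factors — one must verify these exponents are genuinely $>1$ so that $\int_0^t$ does not lose a logarithm and the output exponent is exactly $n/4$ (resp. $n/4+1/2$ for $w$), not worse. This is where the extra half-power decay of $\|w\|_{L^2}$ from Proposition~\ref{Decay_w_New} is essential, and where the smallness of $\tau$ (needed for that proposition) is used. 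A secondary point is the treatment of $\|v(t)\|_{L^2}$: it is not a component of $\vecc{U}$, so I would recover its decay from $\|v\|_{L^2} \leq \tau\|w\|_{L^2} + \|\nabla(v+\tau w)\|_{L^2} \cdot$(constant) via \eqref{v,Nabla_w}, i.e. from the already-established decay of the $\vecc{U}$-component $\nabla(\psi+\tau v)$ — wait, more carefully, $v + \tau w$ decays at rate $n/4$ and $\tau w$ at rate $n/4 + 1/2$, so $v = (v+\tau w) - \tau w$ decays at rate $n/4$, consistent with the claim. Once $\mathcal{M}(t) \lesssim \Lambda_0 + \mathcal{M}(t)^2$ is in hand with a time-independent constant, and $\mathcal{M}(0) \lesssim \Lambda_0$, smallness of $\Lambda_0$ closes the bootstrap and yields \eqref{decay_rates} for all $t \geq 0$, completing the proof.
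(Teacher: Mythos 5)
Your overall architecture --- the time-weighted quantity $\mathcal{M}(t)$, Duhamel's formula, the splitting of the convolution at $t/2$, the use of Lemma~\ref{Lemma_Decay_Linear}, and closure via Lemma~\ref{Lemma_Stauss} --- is exactly the paper's. However, there is a genuine gap in how you propose to estimate the nonlinear forcing, and it is precisely the point where the proof is delicate.

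You plan to control the higher-order factors $\Vert \nabla^2 v\Vert_{L^2}$ and $\Vert \nabla w\Vert_{L^2}$ appearing in $\Vert v\Vert_{L^\infty}$ and $\Vert \nabla(vw)\Vert_{L^2}$ by the \emph{time-uniform} energy bound $\|\vecc{\Psi}\|_{\mathcal{E}(t)}\lesssim \delta$ from \eqref{energy_bound_Thm2}. This does not close the bootstrap. Take the term $\Vert v\Vert_{L^\infty}\Vert\nabla w\Vert_{L^2}$ in $\Vert\nabla\mathcal{F}(\vecc{\Psi})\Vert_{L^2}$: with $\Vert\nabla w\Vert_{L^2}\lesssim\delta$ and $\Vert v\Vert_{L^\infty}\lesssim\Vert\nabla v\Vert_{L^2}^{1/2}\Vert\nabla^2 v\Vert_{L^2}^{1/2}\lesssim \delta^{1/2}\mathcal{M}^{1/2}(1+\sigma)^{-3/8}$, the near-field Duhamel contribution $\int_{t/2}^t e^{-\lambda(t-\sigma)/2}\Vert\nabla\mathcal{F}\Vert_{L^2}\,\textup{d}\sigma$ decays only like $(1+t)^{-3/8}$, far short of the target $(1+t)^{-n/4-1/2}=(1+t)^{-5/4}$ for $j=1$. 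Your convolution criterion ``$\beta>1$'' is also not enough: the output exponent is $\min(\gamma,\beta)$, so you need $\beta\geq n/4+j/2$, and your crude bounds do not even reach $\beta>1$ here. The fix, which is what the paper does, is to bound $\Vert\nabla^2 v\Vert_{L^2}\leq\Vert\nabla\vecc{U}\Vert_{L^2}$ and $\Vert\nabla w\Vert_{L^2}\lesssim\Vert\nabla v\Vert_{L^2}+\Vert\nabla(v+\tau w)\Vert_{L^2}$ by components of $\vecc{U}$ and $\nabla\vecc{U}$, so that every factor carries a decay rate through $\mathcal{M}$; because $\Vert\nabla^2 v\Vert_{L^2}$ decays only at rate $n/4+1/2$, the paper is forced to introduce the intermediate quantity $M_0(t)=\sup_\sigma(1+\sigma)^{3n/8}\Vert v(\sigma)\Vert_{L^\infty}$ (exponent $3n/8$, not $n/2$) with $M_0\lesssim\mathcal{M}$, and the final inequality is $\mathcal{M}\lesssim\Lambda_0+\mathcal{M}^{3/2}+\mathcal{M}^2$ rather than your $\Lambda_0+\mathcal{M}^2$.

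A second, lesser issue: you propose to get the decay of $\Vert w\Vert_{L^2}$ by applying Proposition~\ref{Decay_w_New} to $e^{(t-\sigma)\mathcal{A}}\mathcal{F}(\vecc{\Psi}(\sigma))$ inside Duhamel. That proposition loses a derivative (its right-hand side involves $\Vert\nabla^{j+1}\vecc{U}_0\Vert_{L^2}$), so even the $j=0$ decay of $w$ would require $\Vert\nabla(vw)\Vert_{L^2}$, compounding the problem above. The paper instead derives the $w$-decay for the nonlinear problem directly from the pointwise energy identity \eqref{E_0_Energy} (i.e., $\tau\frac{\textup{d}}{\textup{d}t}\Vert w\Vert_{L^2}^2+\Vert w\Vert_{L^2}^2\lesssim\Vert\nabla\vecc{\Psi}\Vert^2_{\mathscr{E}_1}+|R^{(1)}(w)|$) followed by Gronwall, feeding in the already-established decay of $\Vert\nabla\vecc{U}\Vert_{L^2}$; this avoids the derivative loss entirely and is where the extra half-power of decay for $w$ actually comes from in the nonlinear setting. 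Your recovery of $\Vert v\Vert_{L^2}$ from $v=(v+\tau w)-\tau w$ is correct and matches the paper.
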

\begin{proof}
Let $\vecc{\Psi}=(\psi, v, w, \eta)^T $ be the global solution of our system according to Theorem~\ref{Thm:GlobalExistence}. %We introduce here the norm given by the lower-order energy \eqref{E_1_Eqv},
%\begin{equation}
%\begin{aligned}
%\|\vecc{\Psi}(t)\|_{\mathscr{E}_1} =&\, \begin{multlined}[t]\Vert\nabla(\psi +\tau \psi_t)(t)\Vert^{2}_{L^2} + \Vert(\psi_t+\tau \psi_{tt})(t)\Vert^{2}_{L^2}+\Vert \nabla \psi_t(t)\Vert^{2}_{L^2}%
%	+\Vert \nabla\eta\Vert^{2}_{L^2, -g'}.\end{multlined}
%\end{aligned}
%\end{equation} 
We have
\[\Vert \vecc{U}(t)\Vert_{L^2} \leq \Vert \vecc{\Psi}(t)\Vert_{\mathscr{E}_1} \quad \text{and} \quad \Vert \nabla \vecc{U}(t)\Vert_{L^2} \leq \Vert \nabla \vecc{\Psi}(t)\Vert_{\mathscr{E}_1},\]
with the norm $\|\cdot\|_{\mathscr{E}_1}$ as in \eqref{new_norm}. Motivated by the decay estimates for the linearized problem obtained in Lemma~\ref{Lemma_Decay_Linear} and Proposition~\ref{Decay_w_New}, which we expect to hold for the nonlinear problem as well for small data, we define 
\begin{equation}\label{M_t_Decay}
\begin{aligned}
\mathcal{M}(t)=\sup_{0\leq \sigma\leq t}\Big[(1+\sigma)^{n/4}
\Vert  \vecc{U}(\sigma)\Vert_{L^2}+(1+\sigma)^{n/4+1/2}
\Vert \nabla \vecc{U}(\sigma)\Vert_{L^2}\Big.\\
\Big.+(1+\sigma)^{n/4}
\Vert  v(\sigma)\Vert_{L^2}+(1+\sigma)^{\frac{n}{4}+\frac{1}{2}}\Vert w(\sigma)\Vert_{L^2}\Big].
\end{aligned} 
\end{equation}
\noindent Keeping in mind the $L^\infty$ bound \eqref{v_L_infty_Estimate} for $v$,  we also introduce the quantity 
\begin{eqnarray}\label{M_0_t}
M_{0}(t) &=&\sup_{0\leq \sigma \leq t}(1+\sigma )^{3n/8}\left\Vert
v\left( \sigma \right) \right\Vert _{L^{\infty }}.  
\end{eqnarray}
By using the Gagliardo--Nirenberg interpolation inequality \eqref{L_infty_Interp}, we deduce that 
\begin{eqnarray}\label{M_0_M_Ineq}
M_0(t)\lesssim \mathcal{M}(t). 
\end{eqnarray}
The reason for taking the exponent $3n/8$ in \eqref{M_0_t} instead of $n/2$ is to make sure that the inequality above holds. The resulting slow decay of  $\Vert v\Vert_{L^\infty}$ is a consequence of the slow decay of $\Vert \nabla^2 v\Vert_{L^2}$ given by $(1+t)^{-n/4-1/2}$.  Despite this, we can still prove that the vector $\vecc{U}$ decays as fast as in the linear equation thanks to the fast decay of $\Vert w\Vert_{L^2}$.   \\
\indent Our next aim is to show that $\mathcal{M}(t)$ is bounded uniformly in $t$ if $\Lambda_0$, defined in \eqref{Lambda_0}, is small enough. We begin by writing the solution to our problem as
\begin{eqnarray*}
\vecc{\Psi}(t)=e^{t \mathcal{A}}\vecc{\Psi}_0+\displaystyle \int_0^t e^{(t-r)\mathcal{A}}\mathcal{F}(\vecc{\Psi})(r)\, \textup{d}r. 
\end{eqnarray*}
From here we directly estimate
\begin{equation} \label{decay_nl_1}
\begin{aligned}
\Vert \nabla^j\vecc{U}(t)\Vert_{L^2}\, \textcolor{mygreen}{\leq}\, \Vert \nabla^j \vecc{\Psi}(t)\Vert_{\mathscr{E}_1}\leq&\, \Vert \nabla^je^{t\mathcal{A}}\vecc{\Psi}_{0}\Vert_{\mathscr{E}_1}+\int_0^t \left\Vert \nabla^je^{(t-r
)\mathcal{A}}\mathcal{F}(\vecc{\Psi})(r)\right\Vert_{\mathscr{E}_1} \textup{d}r\\
=&\, \begin{multlined}[t]\Vert \nabla^j e^{t\mathcal{A}}\vecc{\Psi}_{0}\Vert_{\mathscr{E}_1}+\int_0^{t/2} \left\Vert \nabla^je^{(t-r
)\mathcal{A}}\mathcal{F}(\vecc{\Psi})(r)\right\Vert_{\mathscr{E}_1} \textup{d}r\\
+\int_{t/2}^{t} \left\Vert \nabla^j e^{(t-r
)\mathcal{A}}\mathcal{F}(\vecc{\Psi})(r)\right\Vert_{\mathscr{E}_1} \textup{d}r \end{multlined}
\end{aligned}
\end{equation} 
for $j \in \{0,1\}$. By applying the linear decay rate \eqref{Decay_Linearized} from Lemma~\ref{Lemma_Decay_Linear}, we have  
\begin{equation}\label{Decay_Liearized_Prob}
\begin{aligned}
\Vert \nabla^je^{t\mathcal{A}}\vecc{\Psi}_{0}\Vert_{\mathscr{E}_1}\lesssim&\, (1+t)^{-n/4-j/2}\left(\Vert \vecc{U}_0\Vert_{L^1}+\Vert \nabla^j   \vecc{U}_0\Vert_{L^2}\right),
\end{aligned}
\end{equation}
with $j \in \{0,1\}$. We need to estimate the remaining two integrals on the right-hand side of \eqref{decay_nl_1}.
For the first one, we have by  using the linear estimate \eqref{Decay_Linearized} and Duhamel's principle, 
\begin{equation}\label{Estimate_Decay_1}
\begin{aligned}
\int_0^{t/2} \left\Vert \nabla^je^{(t-r
	)\mathcal{A}}\mathcal{F}(\vecc{\Psi})(r)\right\Vert_{\mathscr{E}_1} \textup{d}r\lesssim& \,\begin{multlined}[t]
\int_0^{t/2} (1+t-r)^{-n/4-j/2}\Vert \mathcal{F}(\vecc{\Psi})(r)\Vert_{L^1} \,\textup{d}r\\
+ \int_0^{t/2} e^{-(t-r)}\Vert  \nabla^j\mathcal{F}(\vecc{\Psi})(r)\Vert_{L^2}\, \textup{d}r, \end{multlined} 
\end{aligned}
\end{equation}
where $\mathcal{F}$ is defined as in \eqref{def_F}. We then have by employing H\"older's inequality,
\begin{equation}\label{F_L_1_Estimate_0}
\begin{aligned}
\Vert \mathcal{F}(\vecc{\Psi})(t)\Vert_{L^1}\lesssim\, \Vert vw\Vert_{L^1}
\lesssim& \, \Vert v\Vert_{L^2} \Vert w\Vert_{L^2}
\lesssim \,
 \Vert v\Vert_{L^2} ^2+ \Vert w\Vert^2_{L^2}.
 \end{aligned}
\end{equation}
By using the above estimate and recalling the definition of $\mathcal{M}$ in \eqref{M_t_Decay}, we have
 \begin{equation}
 \begin{aligned}
&\int_0^{t/2} (1+t-r)^{-n/4-j/2}\Vert \mathcal{F}(\vecc{\Psi})(r)\Vert_{L^1} \,\textup{d}r\\
\lesssim& \,\mathcal{M}^2(t)\int_0^{t/2} (1+t-r)^{-n/4-j/2}(1+r)^{-n/2} \textup{d} r\\
\lesssim&\,  \mathcal{M}^2(t)\int_0^{t/2} (1+t)^{-n/4-j/2}(1+r)^{-n/2} \textup{d} r.
\end{aligned}
\end{equation}
We can further bound the integral on the right, leading to
 \begin{equation}\label{J_1_1}
\begin{aligned}
&\int_0^{t/2} (1+t-r)^{-n/4-j/2}\Vert \mathcal{F}(\vecc{\Psi})(r)\Vert_{L^1} \,\textup{d}r\\
\lesssim&\,  \mathcal{M}^2(t)(1+t)^{-n/4-j/2} \int_0^{t/2} (1+r)^{-n/2} \textup{d}r
\lesssim\,
\mathcal{M}^2(t)(1+t)^{-n/4-j/2},
\end{aligned}
\end{equation}
because $n>2$. To estimate $\int_0^{t/2} e^{-(t-r)}\Vert  \nabla^j\mathcal{F}(\vecc{\Psi})(r)\Vert_{L^2}\, \textup{d}r$, we distinguish the cases $j=0$ and $j=1$. First for $j=0$, we have 
\begin{equation}\label{Estimate_j_0}
\begin{aligned}
\Vert  \mathcal{F}(\vecc{\Psi})(t)\Vert_{L^2}
\lesssim&\, \Vert v\Vert_{L^\infty} \Vert w\Vert_{L^2} 
\lesssim\,  M_0(t)(1+t)^{-3n/8}\mathcal{M}(t)(1+t)^{-n/4-1/2}\\
\lesssim&\,  M_0(t)\mathcal{M}(t)(1+t)^{-5n/8-1/2}
\lesssim\,  M_0(t)\mathcal{M}(t)(1+t)^{-3n/4},
\end{aligned}
\end{equation}
because $n \leq 4$. For $j=1$, we have by using \eqref{Ladyz_Ineq} and \eqref{First_inequaliy_Guass} 
\begin{equation} \label{F_L_1_Estimate}
\begin{aligned}
\Vert \nabla \mathcal{F}(\vecc{\Psi})(t)\Vert_{L^2}\lesssim&\, \Vert \nabla(vw)\Vert_{L^2}\\ 
\lesssim&\,   \|\nabla v\|_{L^4}\|w\|_{L^4}+\|v\|_{L^\infty}\|\nabla w\|_{L^2} \\
\lesssim&\, \Vert \nabla v\Vert_{L^2}^{1-n/4}\Vert \nabla^2 v \Vert_{L^2}^{n/4} \Vert w\Vert_{L^2}^{1-n/4}\Vert \nabla w\Vert_{L^2}^{n/4}+\|v\|_{L^\infty}\|\nabla w\|_{L^2}\\
\lesssim&\, \Vert \nabla v\Vert_{L^2}^{1-n/4}\Vert \nabla \vecc{U} \Vert_{L^2}^{n/4} \Vert w\Vert_{L^2}^{1-n/4}\Vert \nabla w\Vert_{L^2}^{n/4}+\|v\|_{L^\infty}\|\nabla w\|_{L^2}.
 \end{aligned}
\end{equation}
Keeping in mind how $M_0$ is defined in \eqref{M_0_t}, we have from above 
\begin{equation}\label{Estimate_First_Term}
\begin{aligned}
\Vert \nabla \mathcal{F}(\vecc{\Psi})(t)\Vert_{L^2}\lesssim&\, (1+t)^{-5n/8-1/2} \mathcal{M}^2(t)+M_0(t)\mathcal{M}(t)(1+t)^{-5n/8-1/2}\\
\lesssim& \, (1+t)^{-5n/8-1/2}(\mathcal{M}^2(t)+M_0(t)\mathcal{M}(t)).
\end{aligned}
\end{equation}
Consequently, by combining the above bound with \eqref{Estimate_j_0}, we deduce  
\begin{equation}\label{J_1_2}
\int_0^{t/2} e^{-(t-r)}\Vert  \nabla^j\mathcal{F}(\vecc{\Psi})(r)\Vert_{L^2}\, \textup{d}r \lesssim \,
(1+t)^{-5n/8-j/2}(\mathcal{M}^2(t)+M_0(t)\mathcal{M}(t))
\end{equation}
for $j\in \{0,1\}$. The integral $\int_{t/2}^{t} \left\Vert \nabla^j e^{(t-r
	)\mathcal{A}}\mathcal{F}(\vecc{\Psi})(r)\right\Vert_{\mathscr{E}_1} \textup{d}r$ is estimated by applying the linear decay rate given in \eqref{Decay_Linearized}
with $j=1$, but using $\mathcal{F}(\vecc{\Psi})(r)$ instead of $%
\vecc{U}_{0}$. By doing so, we obtain 
\begin{equation}
\begin{aligned}
\int_{t/2}^{t} \left\Vert \nabla^j e^{(t-r
	)\mathcal{A}}\mathcal{F}(\vecc{\Psi})(r)\right\Vert_{\mathscr{E}_1} \textup{d}r=&\,\int_{t/2}^{t}\left\Vert \nabla e^{(t-r)\mathcal{A}}
\mathcal{F}(\vecc{\Psi})(r)\right\Vert _{L^{2}}\, \textup{d}r \\
\lesssim&\, \begin{multlined}[t]\int_{t/2}^{t}\left( 1+t-r\right) ^{-\frac{n}{4}-\frac{1}{2}%
}\left\Vert \mathcal{F}(\vecc{\Psi})(r)\right\Vert _{L^{1}}\, \textup{d}r \\
+\int_{t/2}^{t}e^{-\lambda\left( t-r\right) /2}\left\Vert \nabla \mathcal{F}%
(\vecc{\Psi})(r)\right\Vert _{L^{2}}\, \textup{d}r. 
%\equiv J_{21}+J_{22}. 
\end{multlined}
\end{aligned}
\end{equation}%
On the other hand, we have by applying \eqref{F_L_1_Estimate_0} and recalling the definition of $\mathcal{M}$ in \eqref{M_t_Decay},  
\begin{equation}
\Vert \mathcal{F}(\vecc{\Psi})(t)\Vert _{L^{1}(\mathbb{R}^{n})}\lesssim  \, \mathcal{M}^{2}(t)(1+t)^{-n/2}.
\end{equation}%
Thus, we can derive the bound
\begin{equation}
\begin{aligned}
\int_{t/2}^{t}\left( 1+t-r\right) ^{-\frac{n}{4}-\frac{1}{2}%
}\left\Vert \mathcal{F}(\vecc{\Psi})(r)\right\Vert _{L^{1}}\, \textup{d}r
 \lesssim&\, \mathcal{M}^{2}(t)\int_{t/2}^{t}\left( 1+t-r\right) ^{-\frac{n%
}{4}-\frac{1}{2}}(1+r)^{-\frac{n}{2}}\textup{d}r   \\
\lesssim&\, \mathcal{M}^{2}(t)(1+t/2)^{-\frac{n}{2}%
}\int_{t/2}^{t}\left( 1+t-r\right) ^{-\frac{n}{4}-\frac{1}{2}}\textup{d}r.  
\end{aligned}
\end{equation}%
Because $n>2$, then we know that
\begin{equation} \label{J_21_estimate}
\begin{aligned}
&\int_{t/2}^{t}\left( 1+t-r\right) ^{-\frac{n}{4}-\frac{1}{2}%
}\left\Vert \mathcal{F}(\vecc{\Psi})(r)\right\Vert _{L^{1}}\, \textup{d}r\\
\lesssim&\, \mathcal{M}^{2}(t)(1+t/2)^{-\frac{n}{2}}\int_{0}^{t/2}\left( 1+r\right)
^{-\frac{n}{4}-\frac{1}{2}}\textup{d}r   
\lesssim\, \mathcal{M}^{2}(t)
(1+t)^{-\frac{n}{4}-\frac{1}{2}}.
\end{aligned}
\end{equation}%
Furthermore, we have by using the bound \eqref{Estimate_First_Term} that
\begin{equation}\label{J_2_2_Estimate}
\int_{t/2}^{t}e^{-\lambda\left( t-r\right) /2}\left\Vert \nabla \mathcal{F}%
(\vecc{\Psi})(r)\right\Vert _{L^{2}}\, \textup{d}r\lesssim (1+t)^{-5n/8-1/2}(\mathcal{M}^2(t)+M_0(t)\mathcal{M}(t)).
\end{equation}% 
Therefore, by combining estimates \eqref{Decay_Liearized_Prob}, \eqref{J_1_1}, \eqref{J_1_2}, \eqref{J_21_estimate}, and the above inequality, we infer    
\begin{equation} \label{Estimate_Decay_2}
\begin{aligned}
\Vert \nabla ^{j}\vecc{U}(t)\Vert _{L^2} \lesssim& \, \begin{multlined}[t]
(1+t)^{-n/4-j/2}\left( \Vert \vecc{U}_{0}\Vert _{L^{1}}+\Vert \nabla ^{j}\vecc{U}_{0}\Vert _{L^{2}}\right) 
  \\
+\mathcal{M}^{2}(t)(1+t)^{-n/4-j/2}+(1+t)^{-n/4-1/2-j/2}M_{0}(t)%
\mathcal{M}(t)\end{multlined}
\end{aligned}
\end{equation} 
for $n=3$ and $j \in \{0,1\}$. At this point we also need an estimate of $\|w\|_{L^2}$. Recalling the energy bound we obtained in \eqref{E_0_Energy}, we have
\begin{equation}
\begin{aligned}
	\frac{1}{2}\frac{\textup{d}}{\dt}\int_{\mathbb{R}^{n}}\tau \left\vert  
	w\right\vert ^{2}\dx+\frac{1}{2}\int_{\mathbb{R}^{n}} |w|^2 \dx
	\lesssim\,
	 \Vert \nabla\vecc{\Psi}(t)\Vert_{\mathscr{E}_1}^2	+|R^{(1)}(w)|. 
	\end{aligned}
\end{equation}
By applying Gronwall's inequality, we deduce that 
\begin{equation} \label{wL2}
\begin{aligned}
\Vert w(t)\Vert_{L^2}^2\lesssim& \, \begin{multlined}[t]\Vert w_0\Vert_{L^2}^2\exp{(-\tfrac{1}{\tau} t)}+\int_{0}^t \Vert \nabla\vecc{\Psi}(t)\Vert_{\mathscr{E}_1}^2 	\exp{(-\tfrac{1}{\tau}(t-s))}\, \textup{d}s\\  
+\int_{0}^t |R^{(1)}(w)(s)|)\exp{(-\tfrac{1}{\tau}(t-s))}\, \textup{d}s.\end{multlined}
\end{aligned}
\end{equation}
We need to further estimate the two integrals on the right. By making use of the bound \eqref{Estimate_Decay_2} with $j=1$, we have 
\begin{equation}
\begin{aligned}
&\int_{0}^t  \Vert \nabla^j\vecc{\Psi}(t)\Vert_{\mathscr{E}_1}^2	\exp{(-\tfrac{1}{\tau}(t-s))}\, \textup{d}s\\ \lesssim&\, \begin{multlined}[t](1+t)^{-n/4-1/2}\left( \Vert \vecc{U}_{0}\Vert _{L^{1}}+\Vert \nabla \vecc{U}_{0}\Vert _{L^{2}}\right)\\
+\mathcal{M}^{2}(t)(1+t)^{-n/4-1/2}+(1+t)^{-n/4-1}M_{0}(t)%
\mathcal{M}(t). \end{multlined}
\end{aligned}
\end{equation}
Concerning the second integral on the right in \eqref{wL2}, we find
\begin{equation}
\begin{aligned}
|R^{(1)}(w)(t)|\lesssim\,\Vert v w^2\Vert_{L^1}
\lesssim \, \Vert v\Vert_{L^\infty}\Vert w\Vert_{L^2}^2
\lesssim& \, M_0(t)\mathcal{M}^2(t) (1+t)^{-3n/8} (1+t)^{-n/2-1}\\[1mm]
\lesssim& \,  M_0(t)\mathcal{M}^2(t) (1+t)^{-(\frac{7n}{8}+1)}. 
\end{aligned}
\end{equation}
This inequality immediately yields 
\begin{eqnarray*}
\int_{0}^t |R^{(1)}(w)(s)|)\exp{(-\tfrac{1}{\tau}(t-s))}\, \textup{d}s\lesssim  M_0(t)\mathcal{M}^2(t) (1+t)^{-(\frac{7n}{8}+1)}. 
\end{eqnarray*}
Consequently, we deduce from above that 
\begin{equation}
\begin{aligned}
\Vert w(t)\Vert_{L^2}\lesssim&\, \begin{multlined}[t] \Vert w_0\Vert_{L^2}\exp{(-\tfrac{1}{2\tau} t)}+(1+t)^{-n/4-1/2}\left( \Vert \vecc{U}_{0}\Vert _{L^{1}}+\Vert \nabla \vecc{U}_{0}\Vert _{L^{2}}\right)\\[1mm]
+\mathcal{M}^{2}(t)(1+t)^{-n/4-1/2}+(1+t)^{-n/4-1}M_{0}(t)%
\mathcal{M}(t) \\[1mm]
+\sqrt{M_0(t)} \mathcal{M}(t)  (1+t)^{-(\frac{7n}{16}+\frac{1}{2})},\end{multlined}
\end{aligned}
\end{equation}
which further implies that
\begin{equation}\label{w_Estimate_Nonl}
\begin{aligned}
\Vert w(t)\Vert_{L^2} \lesssim& \, \begin{multlined}[t]\left(\Vert w_0\Vert_{L^2}+ \Vert \vecc{U}_{0}\Vert _{L^{1}}+\Vert \nabla \vecc{U}_{0}\Vert _{L^{2}}\right)(1+t)^{-n/4-1/2}\\
 \Big(\mathcal{M}^{2}(t)+\sqrt{M_0(t)} \mathcal{M}(t) +M_{0}(t)%
\mathcal{M}(t) \Big)(1+t)^{-n/4-1/2}. \end{multlined}
\end{aligned}
\end{equation}
By also using the fact that
$
\Vert  v\Vert_{L^2}\lesssim\Vert  w\Vert_{L^2}+\Vert  \vecc{U}\Vert_{L^2},  
$
together with estimates \eqref{Estimate_Decay_2} and \eqref{w_Estimate_Nonl}, we obtain
\begin{equation}\label{v_estimate_Nonl}
\begin{aligned}
\Vert  v(t)\Vert_{L^2}\lesssim&\, \begin{multlined}[t]\left(\Vert w_0\Vert_{L^2}+ \Vert \vecc{U}_{0}\Vert _{L^{1}}+\Vert  \vecc{U}_{0}\Vert _{H^{1}}\right)(1+t)^{-n/4}\\
+\Big(\mathcal{M}^{2}(t)+\sqrt{M_0(t)} \mathcal{M}(t) +M_{0}(t)%
\mathcal{M}(t) \Big)(1+t)^{-n/4}. \end{multlined}
\end{aligned}
\end{equation}
By collecting \eqref{Estimate_Decay_2}, \eqref{w_Estimate_Nonl} and \eqref{v_estimate_Nonl} and recalling the definition of $\mathcal{M}(t)$ in \eqref{M_t_Decay}, we find 
\begin{equation}
\begin{aligned}
\mathcal{M}(t)\lesssim \, \Vert w_0\Vert_{L^2}+ \Vert \vecc{U}_{0}\Vert _{L^{1}}+\Vert  \vecc{U}_{0}\Vert _{H^1}
+\mathcal{M}^{2}(t)+\sqrt{M_0(t)} \mathcal{M}(t) +M_{0}(t)%
\mathcal{M}(t).  
\end{aligned}
\end{equation}
By relying on \eqref{M_0_M_Ineq}, we deduce that 
\begin{eqnarray*}
\mathcal{M}(t)\lesssim \, \Vert w_0\Vert_{L^2}+ \Vert \vecc{U}_{0}\Vert _{L^{1}}+\Vert \nabla \vecc{U}_{0}\Vert _{L^{2}}+\mathcal{M}^{3/2}(t)+\mathcal{M}^2(t).
\end{eqnarray*}
This last estimate together with Lemma~\ref{Lemma_Stauss} implies that there exists $C>0$, independent of time, such that
\begin{eqnarray*} 
\mathcal{M}(t) \, \lesssim \Lambda_0,
\end{eqnarray*}  
provided that $\Lambda_0=\Vert w_0\Vert_{L^2}+ \Vert \vecc{U}_{0}\Vert _{L^{1}}+\Vert \nabla \vecc{U}_{0}\Vert _{L^{2}}$ is small enough. This step completes the proof of Theorem \ref{Theorem_Decay}. 
\end{proof}
\begin{remark}[On optimality of the decay rates]
It is known that in the absence of the memory term (i.e., when $g=0$), the same decay rates as in \eqref{decay_rates} are optimal for the linearized problem;   see~\cite{PellSaid_2019_1}. For small enough data, the nonlinear problem's solution is expected to behave in the same manner.  \\
\indent It has been proven that  adding the memory damping to the linear  wave damped equation:
\begin{equation}\label{damped_Wave}
u_{tt}-\Delta u+u_t=0,\quad x\in \R^n,\, t>0
\end{equation}
will not improve the decay rate even if the kernel decays exponentially; see~~\cite{dharmawardane2010decay}. Moreover, wave equation with an exponentially decaying memory 
(i.e., the term $u_t$ replaced by $\int_0^t g(s)\Delta u(t-s)ds$ in \eqref{damped_Wave}) has the same decay rate as with the linear damping only; see~\cite{conti2007decay, matsumura1977global}. This suggests that combining linear damping with the memory of type $I$ leads to saturation. We thus expect the estimates of   Theorem~\ref{Theorem_Decay} to be sharp. A possible first step in proving optimality would be to take the memory kernel in the form $g(t)=e^{-\kappa t}, \, \kappa>0$ and apply the Fourier transform in $x$ and the Laplace transform with respect to $t$ to the linearized problem. However, further     challenging questions arise about inverting the Laplace transform and relating the linear result to the nonlinear problem, and are thus left for future work.
\end{remark} 

\bibliography{references}{}

\begin{thebibliography}{10}

\bibitem{bahouri2011fourier}
{\sc H.~Bahouri, J.-Y. Chemin, and R.~Danchin}, {\em Fourier analysis and
  nonlinear partial differential equations}, vol.~343, Springer Science \&
  Business Media, 2011.

\bibitem{beyer1960parameter}
{\sc R.~T. Beyer}, {\em Parameter of nonlinearity in fluids}, The Journal of
  the Acoustical Society of America, 32 (1960), pp.~719--721.

\bibitem{Bounadja_Said_2019}
{\sc H.~Bounadja and B.~Said-Houari}, {\em Decay rates for the
  {M}oore--{G}ibson--{T}hompson equation with memory}, Evolution Equations \&
  Control Theory,  (2020, doi: 10.3934/eect.2020074).

\bibitem{bucci2019feedback}
{\sc F.~Bucci and I.~Lasiecka}, {\em Feedback control of the acoustic pressure
  in ultrasonic wave propagation}, Optimization, 68 (2019), pp.~1811--1854.

\bibitem{cleveland2015nonlinear}
{\sc R.~O. Cleveland}, {\em Nonlinear acoustics in biomedical ultrasound}, in
  AIP Conference Proceedings, vol.~1685, AIP Publishing LLC, 2015, p.~020001.

\bibitem{conti2007decay}
{\sc M.~Conti, S.~Gatti, and V.~Pata}, {\em Decay rates of {V}olterra equations
  on $\mathbb{R}^n$}, Open Mathematics, 5 (2007), pp.~720--732.

\bibitem{crighton1979model}
{\sc D.~G. Crighton}, {\em Model equations of nonlinear acoustics}, Annual
  Review of Fluid Mechanics, 11 (1979), pp.~11--33.

\bibitem{dafermos1970asymptotic}
{\sc C.~M. Dafermos}, {\em Asymptotic stability in viscoelasticity}, Archive
  for Rational Mechanics and Analysis, 37 (1970), pp.~297--308.

\bibitem{dell2016moore}
{\sc F.~Dell'Oro, I.~Lasiecka, and V.~Pata}, {\em The
  {M}oore--{G}ibson--{T}hompson equation with memory in the critical case},
  Journal of Differential Equations, 261 (2016), pp.~4188--4222.

\bibitem{dharmawardane2010decay}
{\sc P.~M. Dharmawardane, J.~E.~M. Rivera, and S.~Kawashima}, {\em Decay
  property for second order hyperbolic systems of viscoelastic materials},
  Journal of mathematical analysis and applications, 366 (2010), pp.~621--635.

\bibitem{duck2002nonlinear}
{\sc F.~A. Duck}, {\em Nonlinear acoustics in diagnostic ultrasound},
  Ultrasound in medicine \& biology, 28 (2002), pp.~1--18.

\bibitem{Gorain_2010}
{\sc G.~C. Gorain}, {\em Stabilization for the vibrations modeled by the
  `standard linear model' of viscoelasticity}, Proc. Indian Acad. Sci. Math.
  Sci., 120 (2010), pp.~495--506.

\bibitem{grasselli2002uniform}
{\sc M.~Grasselli and V.~Pata}, {\em Uniform attractors of nonautonomous
  dynamical systems with memory}, in Evolution equations, semigroups and
  functional analysis, Springer, 2002, pp.~155--178.

\bibitem{jordan2008nonlinear}
{\sc P.~Jordan}, {\em Nonlinear acoustic phenomena in viscous thermally
  relaxing fluids: Shock bifurcation and the emergence of diffusive solitons},
  The Journal of the Acoustical Society of America, 124 (2008), pp.~2491--2491.

\bibitem{kaltenbacher2009global}
{\sc B.~Kaltenbacher and I.~Lasiecka}, {\em Global existence and exponential
  decay rates for the {W}estervelt equation}, Discrete \& Continuous Dynamical
  Systems-S, 2 (2009), p.~503.

\bibitem{Kaltenbacher_2011}
{\sc B.~Kaltenbacher, I.~Lasiecka, and R.~Marchand}, {\em Wellposedness and
  exponential decay rates for the {M}oore--{G}ibson--{T}hompson equation
  arising in high intensity ultrasound}, Control and Cybernetics, 40 (2011),
  pp.~971--988.

\bibitem{KaltenbacherNikolic}
{\sc B.~Kaltenbacher and V.~Nikoli\'c}, {\em On the
  {J}ordan--{M}oore--{G}ibson--{T}hompson equation: {W}ell-posedness with
  quadratic gradient nonlinearity and singular limit for vanishing relaxation
  time}, Mathematical Models and Methods in Applied Sciences, 29 (2019),
  pp.~2523--2556.

\bibitem{kaltenbacher2007numerical}
{\sc M.~Kaltenbacher}, {\em Numerical simulation of mechatronic sensors and
  actuators}, vol.~2, Springer, 2007.

\bibitem{kuznetsov1971equations}
{\sc V.~Kuznetsov}, {\em Equations of nonlinear acoustics}, Sov. Phys. Acoust.,
  16 (1971), pp.~467--470.

\bibitem{lasiecka2017global}
{\sc I.~Lasiecka}, {\em Global solvability of {M}oore--{G}ibson--{T}hompson
  equation with memory arising in nonlinear acoustics}, Journal of Evolution
  Equations, 17 (2017), pp.~411--441.

\bibitem{lasiecka2015moore}
{\sc I.~Lasiecka and X.~Wang}, {\em {M}oore--{G}ibson--{T}hompson equation with
  memory, part {II}: General decay of energy}, Journal of Differential
  Equations, 259 (2015), pp.~7610--7635.

\bibitem{lasiecka2016moore}
\leavevmode\vrule height 2pt depth -1.6pt width 23pt, {\em
  {M}oore--{G}ibson--{T}hompson equation with memory, part {I}: exponential
  decay of energy}, Zeitschrift f{\"u}r angewandte Mathematik und Physik, 67
  (2016).

\bibitem{liu1979instantaneous}
{\sc V.~Liu}, {\em On the instantaneous propagation paradox of heat
  conduction}, Journal of Non-Equilibrium Thermodynamics, 4 (1979),
  pp.~143--148.

\bibitem{maresca2017nonlinear}
{\sc D.~Maresca, A.~Lakshmanan, A.~Lee-Gosselin, J.~M. Melis, Y.-L. Ni, R.~W.
  Bourdeau, D.~M. Kochmann, and M.~G. Shapiro}, {\em Nonlinear ultrasound
  imaging of nanoscale acoustic biomolecules}, Applied physics letters, 110
  (2017), p.~073704.

\bibitem{matsumura1977global}
{\sc A.~Matsumura}, {\em Global existence and asymptotics of the solutions of
  the second-order quasilinear hyperbolic equations with the first-order
  dissipation}, Publications of the Research Institute for Mathematical
  Sciences, 13 (1977), pp.~349--379.

\bibitem{mizohata1993global}
{\sc K.~Mizohata and S.~Ukai}, {\em The global existence of small amplitude
  solutions to the nonlinear acoustic wave equation}, Journal of Mathematics of
  Kyoto University, 33 (1993), pp.~505--522.

\bibitem{naugolnykh2000nonlinear}
{\sc K.~A. Naugolnykh, L.~A. Ostrovsky, O.~A. Sapozhnikov, and M.~F. Hamilton},
  {\em Nonlinear wave processes in acoustics}, Cambridge University Press,
  2000.

\bibitem{Ner59}
{\sc L.~Nirenberg}, {\em On elliptic partial differential equations}, Ann.
  Scuola Norm. Sup. Pisa, 13 (1959), pp.~115--162.

\bibitem{nirenberg2011elliptic}
{\sc L.~Nirenberg}, {\em On elliptic partial differential equations}, in Il
  principio di minimo e sue applicazioni alle equazioni funzionali, Springer,
  2011, pp.~1--48.

\bibitem{pata2009stability}
{\sc V.~Pata}, {\em Stability and exponential stability in linear
  viscoelasticity}, Milan journal of mathematics, 77 (2009), p.~333.

\bibitem{PellSaid_2019_1}
{\sc M.~Pellicer and B.~Said-Houari}, {\em Wellposedness and decay rates for
  the {C}auchy problem of the {M}oore-{G}ibson-{T}hompson equation arising in
  high intensity ultrasound}, Appl Math. Optim, 80 (2019), pp.~447--478.

\bibitem{pinton2011effects}
{\sc G.~Pinton, J.-F. Aubry, M.~Fink, and M.~Tanter}, {\em Effects of nonlinear
  ultrasound propagation on high intensity brain therapy}, Medical physics, 38
  (2011), pp.~1207--1216.

\bibitem{Racke_Said_2019}
{\sc R.~Racke and B.~Said-Houari}, {\em Global well-posedness of the {C}auchy
  problem for the {J}ordan--{M}oore--{G}ibson--{T}hompson equation},
  Communications in Contemporary Mathematics,  (to appear).

\bibitem{stokes1851examination}
{\sc G.~G. Stokes}, {\em An examination of the possible effect of the radiation
  of heat on the propagation of sound}, Phil. Mag, 1 (1851), pp.~305--317.

\bibitem{strauss1968decay}
{\sc W.~A. Strauss}, {\em Decay and asymptotics for $\msquare u={F}(u)^*$},
  Journal of Functional Analysis, 2 (1968), pp.~409--457.

\bibitem{westervelt1963parametric}
{\sc P.~J. Westervelt}, {\em Parametric acoustic array}, The Journal of the
  Acoustical Society of America, 35 (1963), pp.~535--537.

\bibitem{zheng2004nonlinear}
{\sc S.~Zheng}, {\em Nonlinear evolution equations}, CRC Press, 2004.

\end{thebibliography}
\bibliographystyle{siam} 
\end{document}